%
\documentclass[12pt]{iopart}
\usepackage{amsthm,amsmath}
\usepackage{amssymb,mathrsfs}
\usepackage{graphicx}
\usepackage{epstopdf}
\usepackage{stmaryrd}
\usepackage{bm,cases,multirow,booktabs,enumitem}
\usepackage[colorlinks,linkcolor=blue]{hyperref}

\newtheorem{lemma}{Lemma}[section]

\newtheorem{assumption}{Assumption}[section]
\newtheorem{theorem}{Theorem}[section]
\newtheorem{definition}{Definition}[section]
\eqnobysec  
\graphicspath{{Figures/}{./}}
\allowdisplaybreaks[4]
\begin{document}

\title[Modeling and analysis for TF-KSNS system over bounded domain]{Mathematical modeling and analysis for the chemotactic diffusion in porous media with incompressible Navier-Stokes equations over bounded domain}

\author{Fugui Ma$^1$, Wenyi Tian$^{3,}$\footnote[7]{Author to whom any correspondence should be addressed.} and Weihua Deng$^{1,2}$}

\address{$^1$ School of Mathematics and Statistics, Lanzhou University, Lanzhou 730000, China}
\address{$^2$ State Key Laboratory of Natural Product Chemistry, Lanzhou University, Lanzhou 730000, China}
\address{$^3$ Center for Applied Mathematics and KL-AAGDM, Tianjin University, Tianjin 300072, China}
\ead{mafg17@lzu.edu.cn, twymath@gmail.com and dengwh@lzu.edu.cn}

\vspace{10pt}
\begin{indented}
\item[]November 2024
\end{indented}

\begin{abstract}
Myxobacteria aggregate and generate fruiting bodies in the soil to survive under starvation conditions. Considering soil as a porous medium, the biological mechanism and dynamic behavior of myxobacteria and slime (chemoattractants) affected by favorable environments in the soil can not be well characterized by the classical full parabolic Keller-Segel system combined with the incompressible Navier-Stokes equations. In this work, we employ the continuous time random walk (CTRW) approach to characterize the diffusion behavior of myxobacteria and slime in porous media at the microscale, and develop a new macroscopic model named as the time-fractional Keller-Segel system. Then it is coupled with the incompressible Navier-Stokes equations through transport and buoyancy, resulting in the TF-KSNS system, which reveals the biological mechanism from micro to macro and then appropriately describes the dynamic behavior of the chemotactic diffusion of myxobacteria and slime in the soil. In addition, we demonstrate that the TF-KSNS system associated with initial and no-flux/no-flux/Dirichlet boundary conditions over smoothly bounded domain in $\mathbb{R}^{d}$ ($d\geq2$) admits a local well-posed mild solution, which continuously depends on the initial data with proper regularity under a small initial condition. Moreover, the blow-up of the mild solution is rigorously investigated.
\vspace{1pc}

\noindent{\it Keywords}: chemotactic diffusion, Keller-Segel system, Navier-Stokes equations, CTRW, mild solution, blow-up

\noindent{\it AMS subject classifications}: 35Q92, 35Q35, 35R11, 35A01, 35A02, 35B44
\end{abstract}

\section{Introduction}
\label{sec:introduction}
In this paper, we consider to discuss the mathematical modeling, local well-posedness analysis, and blow-up of the solution to the following time-fractional fully parabolic Keller-Segel (K-S) system coupled with the incompressible Navier-Stokes (N-S) equations (abbreviated to TF-KSNS)
\begin{subequations}\label{eq:problem}
\begin{align}
  &\partial^{\alpha}_tn+(\mathbf{u}\cdot\nabla)n = \Delta n-\nabla\cdot\left(n\chi(c)\nabla c\right), && x\in\Omega,~ ~ 0<t\le T, \label{eq:problem1a}\\
  &\partial^{\alpha}_tc+(\mathbf{u}\cdot\nabla)c = \Delta c-\gamma c+n,     && x\in\Omega,~ ~ 0<t\le T, \label{eq:problem1b}\\
  &\partial_t\mathbf{u}+(\mathbf{u}\cdot\nabla)\mathbf{u} = \Delta \mathbf{u}-\nabla P+n\nabla\Phi,\quad \nabla\cdot\mathbf{u}=0,&& x\in\Omega,~ ~ 0<t\le T, \label{eq:problem1c}
\end{align}
\end{subequations}
in a given convex, bounded and simply connected domain $\Omega\subseteq\mathbb{R}^d$ ($d\geq2$) with smooth boundary, supplemented with no-flux boundary conditions for $n$ and $c$, a no-slip boundary condition for $\mathbf{u}$,
\begin{equation}\label{eq:BoundaryC}
\frac{\partial n}{\partial\nu}=\frac{\partial c}{\partial\nu}=0,~ ~ {\rm and}~ ~ \mathbf{u}=0, \quad x\in\partial\Omega,~ ~ 0<t\le T,
\end{equation}
and the initial values
\begin{equation}\label{eq:InitialC}
  n(x,0)=n_0(x),~ c(x,0)=c_0(x),~ \mathbf{u}(x,0)=\mathbf{u}_0(x),\quad x\in\Omega,
\end{equation}
where the unknown functions $n=n(x,t)$, $c=c(x,t)$, $\mathbf{u}=[u_1(x,t),\cdots,u_d(x,t)]^{T}$ and $P=P(x,t)$ denote the population density of myxobacteria, the slime concentration, the fluid velocity field, and the pressure, respectively; $\nu$ is the outer normal direction; the parameter function $\chi(c)$ measures the chemotactic sensitivity, which may depend on slime concentration $c$; $\gamma\geq0$ represents the consumption rate of the slime; and $\Phi$ is the gravitational potential function. $\frac{\partial}{\partial \nu}$ denotes the outward normal derivative along the direction $\nu$ on $\partial\Omega$; the operator $\partial^{\alpha}_t$ is the Caputo fractional derivative defined by
\begin{equation}\label{eq:caputo}
\partial^{\alpha}_t\eta(\cdot,t)=\frac{1}{\Gamma(1-\alpha)}\int_{0}^{t}(t-s)^{-\alpha}\partial_s\eta(\cdot,s)ds,\quad t>0,~ ~ \alpha\in(0,1).
\end{equation}
Typically, as $\alpha\rightarrow1$, $\partial^{\alpha}_t$ turns out to be the first-order local differential operator $\partial_t$, and then the problem \eqref{eq:problem} reduces to the classical Keller-Segel system coupled to the Navier-Stokes equations (see, e.g., \cite{Bonilla23,Wang17,Winkler18,Winkler20}). It is indicated in \eqref{eq:problem} that chemotaxis and fluid dynamics are coupled through two components: the transport of myxobacteria and chemoattractants by the fluid, represented by the terms $(\mathbf{u}\cdot\nabla) n$ and $(\mathbf{u}\cdot\nabla) c$; and the external force $n\nabla\Phi$ exerted on the fluid by the myxobacteria due to buoyancy.

\subsection{Background and previous works}
\label{eq:Earlier}
Chemotaxis refers to the directional migration of substances driven by concentration gradients, such phenomenon is widely occurring in nature, such as chemistry \cite{Mandl23}, medicine \cite{Joseph17,Sackmann14}, biology \cite{Anna21,Olson04,Roussos11}, and etc. The mechanism of chemotaxis in biological processes has always been one of the main concerns of experimental scientists \cite{Celani09,Harris12,Tuval05}. Although there are numerous different environmental settings in which bacterial chemotaxis has been observed, the majority of our understanding of this phenomenon comes from laboratory research on model organisms. Subsequently, the field on mathematical modeling of chemotaxis has grown to a wide range of topics, including system modeling, mechanistic basis, and mathematical analysis of the underlying equations.

Traditionally, macroscopic diffusion is defined as the spatiotemporal distribution of a population density of random walkers \cite{Bellouquid16}. At the macroscopic level, Keller and Segel \cite{Keller70}
built the well-known Keller-Segel model
\begin{equation}\label{eq:problem0}
  \left\{
  \begin{aligned}
    &\partial_tn=D_n\Delta n-\nabla\cdot(n\chi(c)\nabla c),    && x\in\Omega,~ ~ 0<t\le T,\\
    &\partial_tc=D_c\Delta c-\gamma c + n,                     && x\in\Omega,~ ~ 0<t\le T,
  \end{aligned}\right.
\end{equation}
where the positive constants $D_n$ and $D_c$ are the diffusivity of cells and chemoattractant, respectively. The term $\chi(c)=1/c$ represents the chemotactic sensitivity of the cells; more specifically, it expresses the tendency of the cells to aggregate due to the difference of concentration in the chemoattractant $c$. This mathematical model successfully describes chemotactic aggregation of cellular slime molds because of its intuitive simplicity, analytical tractability, and capacity to replicate key behaviors of chemotactic populations \cite{Hillen09}. It has become the prevailing model for representing chemotactic dynamics in biological systems on the population level (see, e.g., \cite{Celinski21,Hillen09,KiselevNRY:2023,Painter11,Stevens00}). This model has solutions blowing up for large enough initial conditions in dimensions $d\geq2$, but the solutions are all regular in one dimension, which is confirmed with the patterns in the biological systems (see, e.g., \cite{Calvez08,Winkler10,Winkler18}).

However, the model \eqref{eq:problem0} ignores the interaction between the bacteria, chemoattractants, and the environment from the viewpoint of biology. Actually, the migration of bacteria is substantially affected by changes in the environment. For instance, Tuval et al. \cite{Tuval05} proposed the following chemotaxis-Navier-Stokes model to describe pattern formation in populations of aerobic bacteria interacting with the liquid environment via transport and buoyancy
\begin{equation}\label{eq:KSNS}
  \left\{\begin{aligned}
  &\partial_tn+(\mathbf{u}\cdot\nabla) n=\Delta n-\nabla\cdot(n\chi(c)\nabla c),
     \quad &&x\in\Omega,~ 0<t\le T, \\
  &\partial_tc+(\mathbf{u}\cdot\nabla) c=\Delta c-nf(c),
     \quad &&x\in\Omega,~ 0<t\le T, \\
  &\partial_t\mathbf{u}+(\mathbf{u}\cdot\nabla)\mathbf{u}=\Delta \mathbf{u}+\nabla P+n\nabla\Phi,
     \quad \nabla\cdot\mathbf{u}=0,
     \quad &&x\in\Omega,~ 0<t\le T.
  \end{aligned}\right.
\end{equation}
Due to buoyancy, the bacteria exert force $n\nabla\Phi$ on the fluid, and the source term $n\nabla\Phi$ reflects that the fluctuations in bacterial population density cause forced changes in fluid velocity $\mathbf{u}$ and pressure $P$ with the given gravitational potential $\Phi$. In the chemotactic movement, bacteria migrate to areas with higher concentrations of chemoattractants, both bacteria and chemoattractants are transported by the surrounding fluid.
Wang \cite{Wang17} considered a modified Keller-Segel system coupled with the Navier-Stokes fluid (i.e., the KSNS system) in a bounded domain $\Omega\subset \mathbb{R}^3$, where the terms $\chi(c)$ and $-nf(c)$ are replaced by $\chi(x,n,c)$ and $-c+n$, respectively. Under the condition $|\chi(x,n,c)|\leq C(1+n)^{-\alpha}$ with $\alpha>\frac{1}{3}$, the author proved the existence of global weak solutions of \eref{eq:KSNS}.
Recently, Winkler \cite{Winkler20} took into account the KSNS system with $\chi(c)=1$ and $-nf(c)=-c+n$ in a bounded domain $\Omega\subset \mathbb{R}^{2}$, equipped with the boundary conditions \eqref{eq:BoundaryC} and appropriate initial data. The corresponding initial-boundary value problem was testified to admit a globally defined generalized solution. One can also refer to \cite{Bonilla23,Winkler18} and the references therein for more discussions.

In recent years, the Keller-Segel model with anomalous diffusion has gained popularity in \cite{Azevedo19,Costa23,Escudero06,Langlands2010}, etc. The time-fractional Keller-Segel system \cite{Azevedo19,Costa23} reads:
\begin{equation}\label{eq:problem011}
  \left\{
  \begin{aligned}
    &\partial^{\alpha}_tn=D_n\Delta n-\nabla\cdot(n\chi(c)\nabla c), && x\in\Omega,~ ~ 0<t\le T,\\
    &\partial^{\alpha}_tc=D_c\Delta c-\gamma c + n,                  && x\in\Omega,~ ~ 0<t\le T.
  \end{aligned}\right.
\end{equation}
Azevedo et al. \cite{Azevedo19} proved an existence result of the time-fractional Keller-Segel system \eqref{eq:problem011} with small initial data in a class of Besov-Morrey spaces in $\mathbb{R}^d$, $d\geq2$. In \cite{Costa23}, Costa and the co-authors dealt with local existence and blow-up of the solution to the time-fractional Keller-Segel system \eref{eq:problem011} in the setting of Lebesgue and Besov spaces for chemotaxis under homogeneous Neumann boundary conditions in a smooth domain of $\mathbb{R}^d$, $d\geq2$. Moreover, the Keller-Segel system \eref{eq:problem011} was extended by Escudero in \cite{Escudero06} to the space fractional Keller-Segel system, where the dispersal is characterized by the fractional Laplacian $(-\Delta)^{\alpha/2}$ with $\alpha\in(0,2)$. Li et al. \cite{Li2018} further investigated the Cauchy problems for the nonlinear fractional time-space generalized Keller-Segel system. Besides that, the time-space fractional Keller-Segel system was coupled with the incompressible Navier-Stokes equations by Jiang et al. \cite{Jiang22,Jiang24b,Jiang24a}, where the corresponding Cauchy initial problem was investigated, including the well-posedness, time decay, as well as asymptotic stability of its mild solution in the Lebesgue or Besov-Morrey spaces in $\mathbb{R}^d$, $d\geq2$.

The time-fractional Keller-Segel system \eqref{eq:problem011} in most existing literatures was simply obtained with the first-order derivative in \eqref{eq:problem0} replaced by a time-fractional analogue. However, the physical and biological explanation of \eqref{eq:problem011} describing the chemotactic diffusion combining anomalous diffusion still remains insufficient despite the efforts made in \cite{Langlands2010} and \cite{Bellouquid16}, let alone its coupled system with incompressible Navier-Stokes equations. To the best of our knowledge, anomalous diffusion models in the previous works were considered in static medium, such as time-fractional Fokker-Planck equations \cite{Klafter11}, time-fractional forward or backward Feynman-Kac equations \cite{CarmiTB:2010,DengHWX:2020}, time-fractional mobile-immobile equations \cite{Ma23}, and etc.
Nevertheless, many more biological processes, physical processes, and chemical reactions are occurring in the non-static medium; at the same time, the corresponding phenomena bring new inspirations and challenges in the modeling, analysis and simulations. Obviously, in our model \eqref{eq:problem}, a non-static liquid field is described by the incompressible Navier-Stokes equations, which is the environment involving the chemotactic diffusion in the porous media.
Moreover, the time-fractional Caputo derivative $\partial^{\alpha}_t$ in \eqref{eq:problem1a}-\eqref{eq:problem1b} is an integral-differential and non-local operator with historical memory. Therefore, the techniques for analyzing traditional Keller-Segel systems with the first-order time derivative, as well as the existing existence and regularity results, can not be directly generalized to the non-local analogue \eqref{eq:problem1a}-\eqref{eq:problem1b}, which poses significant challenges for analysis. Another challenge comes from the handling of bilinear terms $(\mathbf{u}\cdot\nabla)n$ and $(\mathbf{u}\cdot\nabla)c$ in \eqref{eq:problem}. Whenever the time-fractional Keller-Segel system is coupled with the incompressible Navier-Stokes equations, the analytical challenge would be even bigger for the investigations of well-posedness and blow-up of the solution. Thus, it is necessary to fill this gap for the TF-KSNS system \eqref{eq:problem}.

\subsection{Major contributions}
Taking into account the distinctive advantages of the aforementioned works, the main contributions of the present work consist of two aspects.

The first one is to mathematically derive the model \eqref{eq:problem} for describing the chemotactic diffusion with anomalous diffusion in porous media; it provides reasonable mathematical and physical explanations of the biological process governed by the model.
Specifically, the chemotactic diffusion of myxobacteria from a microscopic perspective in porous media can be characterized by the stochastic process with a power-law distribution of waiting time based on the continuous time random walk (CTRW), then the time-fractional Keller-Segel system is naturally derived into the mathematical model. In addition, at the microlevel, the sliding and diffusion of myxobacteria and slime (chemoattractants) in the soil are influenced by the surrounding environment in specific biological processes, such as liquid flow fields and rainwater present in the soil. Similar to the modeling of \eqref{eq:KSNS}, we will also strongly couple the derived time-fractional Keller-Segel system with the incompressible Navier-Stokes equations describing the liquid flow fields and thus obtain the TF-KSNS system \eqref{eq:problem}.

The other one is to perform a complete local well-posedness analysis of the TF-KSNS system \eqref{eq:problem} and to address the blow-up of its mild solution over a bounded domain $\Omega\subset \mathbb{R}^d$ ($d\geq2$) with smooth boundary. To begin with, a complete metric space is first constructed, in which a map is simultaneously designed according to the expression of the mild solution, and it is verified that the map satisfies the conditions of the Banach fixed point theorem. Furthermore, the continuous dependence of the mild solution on the initial values is proved, and the asymptotic property of the mild solution is also discussed. We also prove the uniqueness of the continuous extension of the mild solution. Meanwhile, the blow-up of the solution is also analyzed by using the method of contradiction.

\subsection{Outline of the paper}
The paper is arranged as follows. In Section \ref{sec:Mathmodeling}, we build the mathematical modeling of the TF-KSNS system \eqref{eq:problem} from microscopic to macroscopic under certain fundamental assumptions. Section \ref{sec:Mathanalyses} presents the key mathematical analysis results, including the local well-posedness and finite time blow-up of the mild solution to the TF-KSNS system \eqref{eq:problem}-\eqref{eq:InitialC}. The detailed analyses of the two results are provided in Sections \ref{sec:estmatMS} and \ref{sec:blow-up}, respectively. Some conclusions are finally drawn in Section \ref{sec:Conclusions}.

\section{Mathematical modeling: From microscopic to macroscopic}
\label{sec:Mathmodeling}
\subsection{The biology}
It is ubiquitous in environments that myxobacteria aggregate and eventually form fruiting bodies under starvation conditions (see, e.g., \cite{Dworkin93,Stevens00}), new slimes are produced to help myxobacteria survive. Sliding myxobacteria employ chemotaxis to find chemical hotspots, which are high concentration areas of myxobacteria  \cite{Brumley19}, we can refer to Fig. \ref{fig:shiyitu} for an intuitive understanding of the motility and directivity of myxobacteria. In order to get a deeper grasp of the biological mechanisms of myxobacteria chemotaxis, Steven \cite{Stevens00} constructed a stochastic cellular automaton, in which the author took into account the biological hypotheses that myxobacteria produce slime traces where they like to glide on.

It is mentioned in \cite{Anna21} that ``natural soils are host to a high density and diversity of microorganisms, and even deep-earth porous rocks provide a habitat for active microbial communities." The transport and distribution of myxobacteria in the soil (porous media) remain unclear due to their disordered flow and associated chemical gradients. In the next subsection, we consider to provide a mathematical explanation for a specific biological scenario depicted in Fig. \ref{fig:shiyitu} that myxobacteria aggregate and form fruiting bodies to secrete large amounts of slime for survival in the soil and derive the corresponding mathematical model.
\begin{figure}[!ht]%
  \centering
  \includegraphics[height=4.23cm,width=9.20cm]{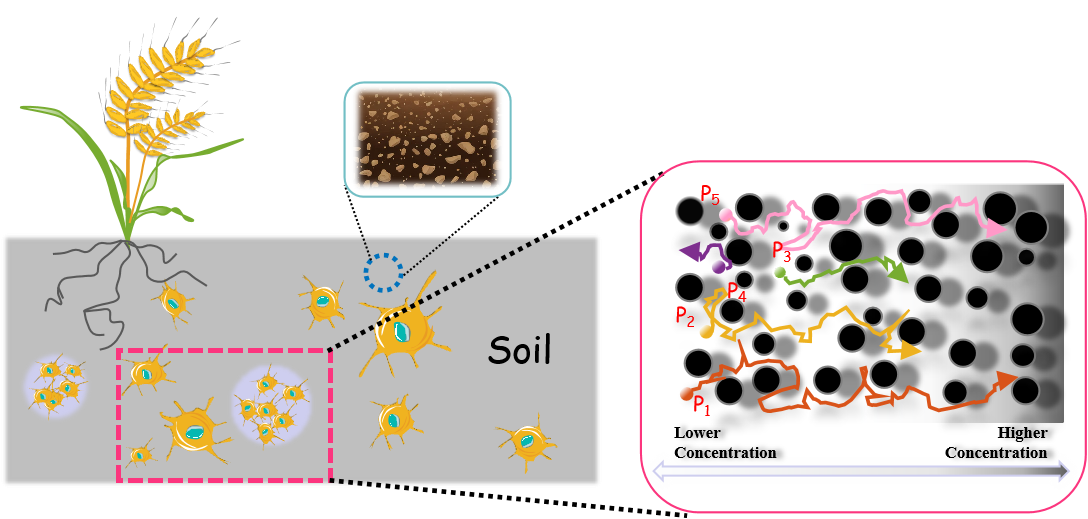}
  \caption{Illustration of the biological environment and the chemotactic diffusion: the myxobacteria (i.e., $\{P_j\}_{j\geq1, j\in\mathbb{N}_0}$) glide in a porous medium towards a relatively high concentration of chemical signals, where black round holes with different sizes refer to obstacles in porous media, and the shades of gray in the background indicate the concentration of the chemoattractant; the darker the color, the higher the concentration. Myxobacteria aggregate and form fruiting bodies that produce new slime for survival.}
  \label{fig:shiyitu}
\end{figure}

\subsection{Time-fractional chemotaxis diffusion equations with space- and time-dependent forces}
Based on the biological mechanisms revealed in the previous subsection, we will employ the theory of stochastic processes at the microscale and the continuous time random walk (CTRW) approach to derive the time-fractional Keller-Segel system that characterizes the chemotactic diffusion of myxobacteria and slime in the soil.

In \cite{Langlands2010}, Langlands and Henry utilized the balance equation \eqref{eq:balanced} to describe the chemotaxis and diffusion dynamics of myxobacteria, where a special transition probability density function was constructed based on slime concentration.
To further uncover the biological mechanism, we adopt the transfer probability density function proposed by Steven \cite{Stevens00}, to derive a micro dynamic model applicable to the biological scenario specified in the previous subsection and derive the corresponding macroscopic governing equations by the Laplace transform. The detailed modeling and derivation process are as follows.

To model the chemotactic diffusion of myxobacteria, we denote the probability distribution $n(x,t)$ to represent the concentration of myxobacteria at time $t$ and location $x$.
By using the generalized CTRW balance equation in \cite{Henry10,Langlands2010},
it incorporates the intricate mechanisms of chemotaxis as
\begin{align}\label{eq:balanced}
  n(x,t)=n(x,0)\Psi(t)+\int_{0}^{t}&\big[p_r(x-\delta x\rightarrow x,s)n(x-\delta x,s) \notag \\
                                  &+p_{l}(x+\delta x\rightarrow x,s)n(x+\delta x,s)\big]\psi(t-s)ds,
\end{align}
where $\psi(t)$ is the probability density function (PDF) of waiting time, and $\Psi(t)$ is called survival probability, i.e., the waiting time on a site exceeds $t$, defined by $\Psi(t):=\int_{t}^{\infty}\psi(s)ds=1-\int_{0}^{t}\psi(s)ds$. In \eqref{eq:balanced}, $p_r(x-\delta x\rightarrow x,t)$ and $p_l(x+\delta x\rightarrow x,t)$ are the transition probabilities of jumping from the adjacent grid points ($x\pm\delta x$) in the right and left directions to $x$, respectively. When different distribution functions $\psi(t)$, $p_r(x,t)$, and $p_l(x,t)$ are selected in \eqref{eq:balanced}, we will observe different chemotaxis and diffusion dynamics of myxobacteria. Therefore, it is crucial to pick appropriate distribution functions $\psi(t)$, $p_r(x,t)$ and $p_l(x,t)$ for accurately describing the chemotaxis and diffusion process of myxobacteria.

As is well known, the sub-diffusion process has an advantage for describing the diffusion-transport process of particles in porous media. Inspired by this and taking into account the sliding trace of myxobacteria and the hindrance of narrow gaps in the soil to myxobacteria migration, we take a power law PDF of waiting time as
\begin{equation}\label{eq:psi}
  \psi(t)\sim \tau^\alpha/t^{1+\alpha}, \quad 0<\alpha<1,
\end{equation}
for $t\gg\tau$, where $\tau$ is a characteristic waiting time scale \cite{Henry10,Langlands2010}.
Taking inspiration from the work of Stevens \cite[(8) and (9)]{Stevens00}, the transition probability law governing jumps to the left or right direction in \eqref{eq:balanced} is determined by the relative concentration of the slime (chemoattractant) on either side of the current location, as follows
\begin{equation}\label{eq:tp}
  \left\{\begin{aligned}
  p_{l}(x\rightarrow x-\delta x,t)&=\frac{v(x-\delta x,t)}{v(x-\delta x,t)+v(x+\delta x,t)},\\
  p_{r}(x\rightarrow x+\delta x,t)&=\frac{v(x+\delta x,t)}{v(x-\delta x,t)+v(x+\delta x,t)},
  \end{aligned}\right.
\end{equation}
where $v:=g(c)$ depends on the density (concentration) of the slime (chemoattractant) at time $t$ and point $x$, $g$ is a given function. The transition probability in \eqref{eq:tp} indicates that myxobacteria are attracted by chemoattractants from different directions, which is reasonable in the biological mechanism.

Next, we are ready to derive the governing equations for modeling the chemotaxis and diffusion process of myxobacteria and slime in the soil.
Employing the Laplace transform to the balance equation \eqref{eq:balanced} and the relationship $\hat{\Psi}(z)=z^{-1}(1-\hat{\psi}(z))$, we can obtain the discrete space evolution equation
\begin{equation}\label{eq:LapMW}
\begin{aligned}
z\hat{n}(x,z)-\hat{n}(x,0)
=\frac{\hat{\psi}(z)}{\hat{\Psi}(z)}\big\{-\hat{n}(x,z)
  &+\mathcal{L}_{t\rightarrow z}\{p_r(x-\delta x\rightarrow x,t)n(x-\delta x,t)\}(z)\cr
  &+\mathcal{L}_{t\rightarrow z}\{p_l(x+\delta x\rightarrow x,t)n(x+\delta x,t)\}(z)\big\},
\end{aligned}\end{equation}
where the notation ``$\hat{w}(z)$'' represents the Laplace transform of the function $v(t)$ with respect to the time variable $t$, which is also denoted as $\hat{w}(z):=\mathcal{L}_{t\rightarrow z}\{w(t)\}(z)$. According to \eqref{eq:psi} and $\hat{\Psi}(z)=z^{-1}(1-\hat{\psi}(z))$, it obtains that
\begin{equation}\label{eq:ratio}
\frac{\hat{\psi}(z)}{\hat{\Psi}(z)}\sim D_\alpha\frac{z^{1-\alpha}}{\tau^\alpha}, \quad {\rm with}\quad
D_\alpha=\frac{\alpha}{\Gamma(1-\alpha)}.
\end{equation}
Then it derives from \eqref{eq:ratio} and the inverse Laplace transform of \eqref{eq:LapMW} that
\begin{align}\label{eq:tdMW}
  \frac{dn(x,t)}{dt}
  =\frac{D_\alpha}{\tau^{\alpha}}\frac{d^{\alpha-1}}{dt^{\alpha-1}}
    \big\{-n(x,t)&+p_r(x-\delta x\rightarrow x,t)n(x-\delta x,t)  \notag \\
    &+p_l(x+\delta x\rightarrow x,t)n(x+\delta x,t)\big\},
\end{align}
where it employs the property of the Laplace transform of the Riemann-Liouville fractional derivative ($
\mathcal{L}_{t\rightarrow z}\big\{\frac{d^\alpha}{dt^{\alpha}}w(t)\big\}(z)=z^{\alpha}\hat{w}(z)
-\frac{d^{\alpha-1}}{dt^{\alpha-1}}w(t)\big|_{t=0},~ 0<\alpha<1
$)
and the vanishing behavior near the lower terminal ($\frac{d^{\alpha-1}}{dt^{\alpha-1}}w(t)|_{t=0}=0$, \cite[P{100}-P{105}]{Podlubny99}).

Subsequently, we consider the continuous limit of the aforementioned spatially discrete formulation \eqref{eq:tdMW} with $\delta x\rightarrow0$. To this end, we utilize \eqref{eq:tp} and the Taylor expansions of the lattice functions at the point $x$ up to and including terms of order $(\delta x)^2$. It yields from \eqref{eq:tp} the following expression:
\begin{align*}
  &-n(x,t)+p_r(x-\delta x\rightarrow x,t)n(x-\delta x,t)
           +p_l(x+\delta x\rightarrow x,t)n(x+\delta x,t)\\
  &=\frac{c(x,t)}{c(x,t)+c(x-2\delta x)}n(x-\delta x,t)
           +\frac{c(x,t)}{c(x,t)+c(x+2\delta x)}n(x+\delta x,t)-n(x,t) \\
  &=\frac{v(x,t)\big[\big(v(x,t)+v(x+2\delta x,t)\big)n(x-\delta x,t)
           +\big(v(x,t)+v(x-2\delta x,t)\big)n(x+\delta x,t)\big]}
            {\big(v(x,t)+v(x-2\delta x,t)\big)\big(v(x,t)+v(x+2\delta x,t)\big)} \\
  &~~~~ -\frac{n(x,t)\big(v(x,t)+v(x-2\delta x,t)\big)\big(v(x,t)+v(x+2\delta x,t)\big)}
            {\big(v(x,t)+v(x-2\delta x,t)\big)\big(v(x,t)+v(x+2\delta x,t)\big)}.
\end{align*}
In addition, with simple calculations, it holds that
\begin{align*}
  \frac{1}{\big(v(x,t)+v(x-2\delta_x,t)\big)\big(v(x,t)+v(x+2\delta x,t)\big)}
  \approx\frac{1}{4v^2(x,t)},\quad {\rm as}~ ~ \delta x\rightarrow0,
\end{align*}
\begin{align*}
  &v(x,t)\big[\big(v(x,t)+v(x+2\delta x,t)\big)n(x-\delta x,t)
              +\big(v(x,t)+v(x-2\delta x,t)\big)n(x+\delta x,t)\big]  \\
  &\approx 4v(x,t)\Big(v(x,t)n(x,t)+\frac{1}{2}(\delta x)^2v(x,t)\partial_{xx}n(x,t)   \\
  &\hskip2.2cm-(\delta x)^2\partial_xv(x,t)\partial_xn(x,t)+(\delta x)^2n(x,t)\partial_{xx}v(x,t)\Big),
\end{align*}
and
\begin{align*}
 &n(x,t)\big(v(x,t)+v(x-2\delta x,t)\big)\big(v(x,t)+v(x+2\delta x,t)\big) \\
 &\approx 4n(x,t)(v(x,t))^2+8n(x,t)(\delta x)^2v(x,t)\partial_{xx}v(x,t)
         -4(\delta x)^2n(x,t)\big(\partial_{x}v(x,t)\big)^{2}.
\end{align*}
Then, the above results deduce that
\begin{equation}\label{eq:tdMW2}
  \frac{dn(x,t)}{dt}
  =\frac{d^{\alpha-1}}{dt^{\alpha-1}}
  \Big\{\mathcal{D}_{\alpha}\partial_{xx}n(x,t)-\mathcal{T}_{\alpha}\partial_{x}\Big(\frac{n(x,t)}{v(x,t)}\partial _{x}v(x,t)\Big)\Big\},
\end{equation}
where $\mathcal{D}_{\alpha}=\frac{D_\alpha (\delta x)^2}{2\tau^{\alpha}}$ and $\mathcal{T}_{\alpha}=\frac{D_\alpha (\delta x)^2}{\tau^{\alpha}}$ as $\delta x,\tau\rightarrow0$.
Thus, by using $v=g(c)$ and the connection between Riemann-Liouville and Caputo fractional derivatives (see \cite{Podlubny99}), it leads to
\begin{equation}\label{eq:tdMW4}
  \partial_t^{\alpha}n(x,t)
  =\mathcal{D}_{\alpha}\partial_{xx}n(x,t)-\mathcal{T}_{\alpha}\partial_{x}\big(n(x,t)\chi(c)\partial _{x}v(x,t)\big),
\end{equation}
where $\chi(c)=g'(c)/g(c)$. For $v(x,t)=c(x,t)$ in (\ref{eq:tp}), it refers to the model with $\chi(c)=1/c$ in \cite{Bellomo2015,Hillen09}; for $v(x,t)=e^{\beta c(x,t)}$, it covers the model with $\chi(c)\equiv\beta$ in \cite{Bellomo2015,Langlands2010}. Upon selecting suitable coefficients, we obtain the equation (\ref{eq:problem1a}) including the transport term $(\mathbf{u}\cdot\nabla) n$ by virtue of the fluid velocity. Obviously, as $\alpha\rightarrow1^{+}$, (\ref{eq:tdMW4}) reduces to the first equation in the classical Keller-Segel system (\ref{eq:problem0}).

As we know, the diffusion of particles (slime molecules) in porous media (soil) can be precisely described by the subdiffusion equation \cite{Metzler00}, i.e.,
$\partial_{t}^{\alpha}c(x,t)=\Delta c(x,t)$. Meanwhile, the slime in the soil will continuously dry up and deplete over time, and the aggregation of myxobacteria can create new slime. As a result, the reaction and source term $-\gamma c(x,t)+n(x,t)$, characterizing the decay of slime and new generation of slime, are merged into the subdiffusion equation together with the transport term $(\mathbf{u}\cdot\nabla) c$. Then it produces \eqref{eq:problem1b} to explain the diffusion mechanism of chemoattractants (slime) in the soil (porous media).

Analogous to the derivation of the system \eqref{eq:KSNS} from \eqref{eq:problem0}, we also strongly couple \eqref{eq:problem1a} and \eqref{eq:problem1b} with the well-known incompressible Navier-Stokes equations \eqref{eq:problem1c} to accurately characterize the biological processes and the living environment, where the migration and chemotaxis of myxobacteria and slime under space- and time-dependent forces are driven by the fluid flow in the soil.

\section{Main results}\label{sec:Mathanalyses}
This section presents the analysis results for the TF-KSNS system \eqref{eq:problem} with the no-flux/no-flux/Dirichlet boundary conditions \eqref{eq:BoundaryC} in smoothly bounded domain and the initial data in \eqref{eq:InitialC}. Under some assumptions on the initial values, the problem admits a unique local mild solution; the blow-up and asymptotic behaviors of the mild solution are also established.
\subsection{Preliminaries}
\label{sec:preliminary}
We denote $L^{p}(\Omega)$ ($p\ge1$) as the standard Lebesgue space and $W^{m,p}(\Omega)$ the Sobolev space.
In order to analyze the well-posedness and regularity of \eqref{eq:problem}-\eqref{eq:InitialC} as precisely as possible, we need some $L^p$-$L^q$ estimates of the heat semigroup $e^{t\Delta}$ under the homogenous Neumann and Dirichlet boundary conditions in Lemmas \ref{lem:Heatgroup} and \ref{lem:Dirichletgroup}, respectively.
\begin{lemma}[\cite{Celinski21,Winkler10}]\label{lem:Heatgroup}
  Let $(e^{t\Delta})_{t\geq0}$ be the heat semigroup under the homogenous Neumann boundary condition.
  Then there exist some positive constants $C_i$ depending on $\Omega$, $p$ and $q$ such that the following estimates hold.
  \begin{enumerate}[label=(\roman*)]
    \item If $2\leq p<\infty$, for all $t>0$ and $\omega\in W^{1,p}(\Omega)$, then
          \begin{equation}\label{eq:HG1}
            \big\|\nabla e^{t\Delta}\omega\big\|_{L^{p}(\Omega)}\leq C_1 \|\nabla \omega\|_{L^{p}(\Omega)}.
          \end{equation}
    \item If $1\leq p\leq q\leq\infty$, for all $t>0$ and $\omega\in L^{p}(\Omega)$, then
          \begin{equation}\label{eq:HG2}
            \big\|e^{t\Delta}\omega\big\|_{L^{q}(\Omega)}\leq C_2 \big(1+t^{-\frac{d}{2}(\frac{1}{p}-\frac{1}{q})}\big)\|\omega\|_{L^{p}(\Omega)}.
          \end{equation}
    \item If $1\leq p\leq q\leq\infty$, for all $t>0$ and $\omega\in L^{p}(\Omega)$, then
          \begin{equation}\label{eq:HG3}
            \big\|\nabla e^{t\Delta}\omega\big\|_{L^{q}(\Omega)}\leq C_3 \big(1+t^{-\frac{1}{2}-\frac{d}{2}(\frac{1}{p}-\frac{1}{q})}\big)\|\omega\|_{L^{p}(\Omega)}.
          \end{equation}
    \item If $1< p\leq q\leq\infty$, for all $t>0$ and $\omega\in L^{p}(\Omega;\mathbb{R}^{d})$, then
          \begin{equation}\label{eq:HG4}
            \big\|e^{t\Delta}\nabla\cdot \omega\big\|_{L^{q}(\Omega)}\leq C_4 \big(1+t^{-\frac{1}{2}-\frac{d}{2}(\frac{1}{p}-\frac{1}{q})}\big)\|\omega\|_{L^{p}(\Omega)}.
          \end{equation}
  \end{enumerate}
\end{lemma}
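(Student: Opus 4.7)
The plan is to derive all four estimates from two standard ingredients for the Neumann heat semigroup on a smoothly bounded (and, in our setting, convex) domain: (a) Gaussian-type pointwise bounds on the Neumann heat kernel $K_N(x,y,t)$ and its spatial gradient, namely $|K_N(x,y,t)|\leq c_1(t^{-d/2}\wedge 1)e^{-c_2|x-y|^2/t}$ and $|\nabla_x K_N(x,y,t)|\leq c_1(t^{-(d+1)/2}\wedge 1)e^{-c_2|x-y|^2/t}$, which are classical for the Neumann Laplacian on a $C^2$ bounded domain (the large-time uniform bound reflects the spectral gap on the mean-zero subspace); and (b) a Bochner/Reilly-type energy identity that exploits the convexity of $\partial\Omega$.

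For parts (ii) and (iii) I would invoke the kernel representation $(e^{t\Delta}\omega)(x)=\int_\Omega K_N(x,y,t)\omega(y)\,dy$, estimate $\|K_N(x,\cdot,t)\|_{L^r(\Omega)}\leq C(1+t^{-\frac{d}{2}(1-1/r)})$ with $\frac{1}{r}=1-\frac{1}{p}+\frac{1}{q}$, and apply Young's convolution inequality in the $y$-variable; the same argument with $\nabla_x K_N$ in place of $K_N$ produces the additional $t^{-1/2}$ required by (iii). The ``$1+$'' pattern is precisely what reconciles the singular short-time decay with the uniform large-time bound. For (iv), one first extends $e^{t\Delta}\nabla\cdot$ from $C_c^\infty(\Omega;\mathbb{R}^d)$ via the identity $(e^{t\Delta}\nabla\cdot\omega)(x)=-\int_\Omega \nabla_y K_N(x,y,t)\cdot\omega(y)\,dy$, justified by integration by parts together with the symmetry $K_N(x,y,t)=K_N(y,x,t)$ and the Neumann condition in $y$; the pointwise bound on $|\nabla_y K_N|$ then delivers (iv) through Young's inequality.

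Part (i) is the most delicate assertion, because no time decay is allowed and the hypothesis $p\geq 2$ is essential. Here I would set $w(t):=e^{t\Delta}\omega$ and, using $\partial_t w=\Delta w$, the Bochner identity $\nabla w\cdot\nabla\Delta w=\tfrac{1}{2}\Delta(|\nabla w|^2)-|D^2 w|^2$, and integration by parts, obtain
\[
\frac{1}{p}\frac{d}{dt}\int_\Omega|\nabla w|^p\,dx=-\int_\Omega |\nabla w|^{p-2}\bigl(|D^2 w|^2+(p-2)|\nabla|\nabla w||^2\bigr)dx+\frac{1}{2}\int_{\partial\Omega}|\nabla w|^{p-2}\partial_\nu(|\nabla w|^2)\,dS.
\]
The bulk term is nonpositive for $p\geq 2$, while the convexity of $\partial\Omega$ combined with the Neumann condition $\partial_\nu w=0$ forces $\partial_\nu(|\nabla w|^2)\leq 0$ on $\partial\Omega$ (the standard Reilly/second fundamental form computation). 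Hence $\|\nabla w(t)\|_{L^p(\Omega)}$ is nonincreasing in $t$, yielding (i); a regularization by $(|\nabla w|^2+\varepsilon)^{p/2}$ followed by a density argument extends the bound from smooth data to $W^{1,p}(\Omega)$.

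The main obstacle is the boundary analysis in part (i): the sign of the boundary term in the energy identity is geometry-sensitive, and without the convexity of $\Omega$ one would at best obtain a bound of the form $\|\nabla e^{t\Delta}\omega\|_{L^p}\leq C(\Omega,p,T)\|\nabla\omega\|_{L^p}$ with $C$ depending on $T$, which would inject unwanted time growth into the fixed-point argument carried out later in Section~\ref{sec:estmatMS}. Since the TF-KSNS model \eqref{eq:problem}--\eqref{eq:InitialC} is posed on a convex, simply connected, smoothly bounded domain, this structural requirement is automatically fulfilled and the clean estimate (i) is available.
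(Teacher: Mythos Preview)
The paper does not prove this lemma; it is quoted from \cite{Celinski21,Winkler10} without argument, so there is no in-paper proof to compare against. Your outline is correct and essentially reconstructs the standard arguments behind those citations: parts (ii)--(iv) via Gaussian upper bounds on $K_N$ and $\nabla K_N$ combined with Young's inequality are exactly how these smoothing estimates are obtained, and your duality/integration-by-parts reduction of (iv) to the gradient bound is the same device the paper itself reuses when proving Lemma~\ref{lem:Dirichletgroup}(iii). For (i), your Bochner--Reilly monotonicity computation is valid and yields the sharp constant $C_1=1$; it relies on the convexity of $\Omega$, which is a standing assumption in Section~\ref{sec:introduction}.

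One minor overstatement: estimate \eqref{eq:HG1} actually holds on any smooth bounded domain, not only convex ones---this is what \cite{Winkler10} establishes by semigroup-theoretic means rather than a boundary sign computation---so your closing remark that dropping convexity would force a $T$-dependent constant is too pessimistic. This does not affect the correctness of your argument in the present setting, only the sharpness of the side comment.
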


For the heat semigroup $(e^{t\Delta})_{t\geq0}$ with the homogenous Dirichlet boundary condition, it also has the analogous results to the Neumann boundary case.
\begin{lemma}\label{lem:Dirichletgroup}
  Let $\Omega\in \mathbb{R}^{d}$ ($d\geq2$) be a domain of class $C^{1,1}$, and $\{e^{t\Delta}\}_{t\geq0}$ the Dirichlet heat semigroup in $\Omega$.
  \begin{enumerate}[label=(\roman*)]
    \item For $1\leq p\leq q\leq\infty$ and all $\omega\in L^{p}(\Omega)$, it holds
      \begin{equation}\label{eq:semigroupLap}
         \|e^{t\Delta}\omega\|_{L^{q}(\Omega)}\leq C_{1}t^{-\frac{d}{2}(\frac{1}{p}-\frac{1}{q})}\|\omega\|_{L^{p}(\Omega)},~~ \forall~ t>0.
      \end{equation}
    \item For $1\leq p\leq q\leq\infty$ and all $\omega\in L^{p}(\Omega)$, it holds
        \begin{equation}\label{eq:GradsemigroupLap}
          \|\nabla e^{t\Delta}\omega\|_{L^{q}(\Omega)}\leq C_{2} \big(1+t^{-\frac{1}{2}-\frac{d}{2}(\frac{1}{p}-\frac{1}{q})}\big)
                 \|\omega\|_{L^{p}(\Omega)},~~ \forall~ t>0.
        \end{equation}
    \item For $1\leq p\leq q\leq\infty$ and all $\omega\in L^{p}(\Omega;\mathbb{R}^d)$, it holds
        \begin{equation}\label{eq:DivsemigroupLap}
          \| e^{t\Delta}\nabla\cdot\omega\|_{L^{q}(\Omega)}
                 \leq C_{3} \big(1+t^{-\frac{1}{2}-\frac{d}{2}(\frac{1}{p}-\frac{1}{q})}\big)
                 \|\omega\|_{L^{p}(\Omega)},~~ \forall~ t>0.
        \end{equation}
  \end{enumerate}
\end{lemma}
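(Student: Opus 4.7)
The plan is to derive the three estimates from the classical Gaussian upper bounds on the Dirichlet heat kernel $p_D(t,x,y)$, which hold on any bounded $C^{1,1}$ domain: there exist constants $C_0,c_0,C_0',c_0'>0$ such that
\begin{equation*}
p_D(t,x,y)\le C_0\,t^{-d/2}\exp\bigl(-c_0|x-y|^2/t\bigr),\qquad
|\nabla_x p_D(t,x,y)|\le C_0'\,t^{-(d+1)/2}\exp\bigl(-c_0'|x-y|^2/t\bigr),
\end{equation*}
for all $t>0$ and $x,y\in\Omega$ (see, e.g., Davies, \emph{Heat Kernels and Spectral Theory}, or Ouhabaz, \emph{Analysis of Heat Equations on Domains}). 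Once these pointwise kernel bounds are in hand, each of the three estimates follows by combining them with Riesz--Thorin interpolation and, for (iii), a duality argument.

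For item (i), I would first observe that integrating the Gaussian bound in either variable yields the endpoint estimates $\|e^{t\Delta}\|_{L^1\to L^\infty}\le C\,t^{-d/2}$ and $\|e^{t\Delta}\|_{L^p\to L^p}\le C$ for $p=1,\infty$, so that Riesz--Thorin interpolation directly produces \eqref{eq:semigroupLap}. For item (ii), the analogous endpoint bounds for the kernel $\nabla_x p_D$ are
\begin{equation*}
\|\nabla e^{t\Delta}\|_{L^1\to L^\infty}\le C\,t^{-(d+1)/2},\qquad \|\nabla e^{t\Delta}\|_{L^p\to L^p}\le C\,t^{-1/2}\ (p=1,\infty),
\end{equation*}
obtained by integrating $|\nabla_x p_D|$ against the appropriate variable. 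Interpolation yields the singular-time bound $C\,t^{-1/2-d/2(1/p-1/q)}$, valid for small $t$. The constant term $1$ appearing in \eqref{eq:GradsemigroupLap} accounts for large $t$: invoking the semigroup identity $e^{t\Delta}=e^{\Delta}\,e^{(t-1)\Delta}$ for $t\ge 1$, combined with boundedness of $\nabla e^{\Delta}$ on $L^q(\Omega)$ (from the analytic-semigroup structure of the Dirichlet Laplacian) and the spectral-gap decay of $e^{(t-1)\Delta}$, shows that the operator norm stays uniformly bounded for $t\ge 1$. Packaging the two regimes gives the unified form \eqref{eq:GradsemigroupLap}.

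Item (iii) then follows by duality from item (ii). For $\phi\in C_c^\infty(\Omega)$ with $\|\phi\|_{L^{q'}(\Omega)}\le 1$, self-adjointness of the Dirichlet heat semigroup on $L^2(\Omega)$ and integration by parts (the boundary term vanishes because $e^{t\Delta}\phi|_{\partial\Omega}=0$) give
\begin{equation*}
\bigl|\langle e^{t\Delta}\nabla\!\cdot\omega,\phi\rangle\bigr|=\bigl|\langle\omega,\nabla e^{t\Delta}\phi\rangle\bigr|\le\|\omega\|_{L^p(\Omega)}\,\|\nabla e^{t\Delta}\phi\|_{L^{p'}(\Omega)},
\end{equation*}
and applying (ii) with $(p,q)$ replaced by $(q',p')$ delivers \eqref{eq:DivsemigroupLap}.

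The principal technical obstacle in this programme is securing the pointwise gradient bound on $p_D$ up to the boundary: the $C^{1,1}$ regularity hypothesis on $\partial\Omega$ is precisely what preserves the sharp $t^{-(d+1)/2}$ scaling, since on Lipschitz or merely $C^{1}$ domains the kernel gradient is known to behave worse near the boundary, and the exponents appearing in (ii)--(iii) would then have to be weakened. Everything else in the argument (the integration-by-parts step in (iii), the semigroup splitting for large $t$, and the Riesz--Thorin interpolations) is standard once the two pointwise kernel bounds above are accepted.
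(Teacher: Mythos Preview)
Your proposal is correct and follows essentially the same route as the paper: parts (i)--(ii) are obtained from pointwise Gaussian bounds on the Dirichlet heat kernel and its gradient (the paper cites \cite{Quittner07} and \cite{KimS06} and invokes Young's inequality for convolution, which is equivalent to your Riesz--Thorin endpoint interpolation), and part (iii) is deduced from (ii) by the same self-adjointness/integration-by-parts duality you describe. The only presentational difference is that the paper singles out the case $p=q=\infty$ in (iii) and handles it by the semigroup splitting $e^{t\Delta}=e^{(t/2)\Delta}e^{(t/2)\Delta}$ combined with a limiting argument in the auxiliary exponent, whereas your duality argument with $\phi\in C_c^\infty(\Omega)$ already covers $q=\infty$ directly (since then $q'=1$ and $C_c^\infty$ is dense in $L^1$); conversely, your large-$t$ treatment of (ii) via the spectral gap is more explicit than the paper's, which simply absorbs this into the cited kernel estimate.
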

\begin{proof}
  Regarding (i), the estimate \eqref{eq:semigroupLap} is a better-known result in \cite[Proposition 48.4]{Quittner07}. As for (ii), \eqref{eq:GradsemigroupLap} can be derived by the heat kernel estimate in \cite{KimS06} and Young's inequality for convolution.

  Now we turn to prove (iii). By \cite[Corollary 1.4]{Brezis10}, there holds
  \begin{equation}\label{eq:Corol}
    \big\|e^{t\Delta}\nabla\cdot\omega\big\|_{L^{q}(\Omega)}
    =\sup_{\substack{\phi\in L^{q^*}(\Omega)\\ \|\phi\|_{L^{q^*}(\Omega)}\leq1}}
    \Big|\int_{\Omega}e^{t\Delta}\nabla\cdot\omega\phi\Big|
    =\max_{\substack{\phi\in L^{q^*}(\Omega)\\ \|\phi\|_{L^{q^*}(\Omega)}\leq1}}
    \Big|\int_{\Omega}e^{t\Delta}\nabla\cdot\omega \phi\Big|.
  \end{equation}
  Note that $e^{t\Delta}$ is a self-adjoint operator. Then, by an analogous approach in the proof of Lemma 1.3 (iv) in \cite{Winkler10}, for any $\phi\in C_{0}^{\infty}(\Omega)$, integrating by parts and employing (\ref{eq:GradsemigroupLap}) with $1\leq q\leq p<\infty$ obtains that
  \begin{align}\label{eq:etmCorol}
    \Big|\int_{\Omega}e^{t\Delta}\nabla\cdot\omega \phi\Big|
    &\leq\Big|-\int_{\Omega}\omega\cdot\nabla e^{t\Delta}\phi\Big|
     \leq \|\omega\|_{L^{p}}\big\|\nabla e^{t\Delta}\phi\big\|_{L^{p*}}        \notag\\
    &\leq C\big(1+t^{-\frac{1}{2}-\frac{d}{2}(\frac{1}{q^*}-\frac{1}{p*})}\big)
    \|\omega\|_{L^{p}(\Omega)}\|\phi\|_{L^{q^*}(\Omega)},
  \end{align}
  where $\frac{1}{p}+\frac{1}{p*}=1$, $\frac{1}{q}+\frac{1}{q*}=1$, and $\frac{1}{q^*}-\frac{1}{p*}=\frac{1}{p}-\frac{1}{q}$. Since $C_{0}^{\infty}(\Omega)$ is dense in $L^{p}(\Omega)$ for any $1\leq p<+\infty$ \cite[Corollary 4.23]{Brezis10}, then we derive from (\ref{eq:Corol}) and (\ref{eq:etmCorol}) that the
  estimate (\ref{eq:DivsemigroupLap}) holds for $1\leq p\leq q<\infty$.
  Furthermore, for the case $q=p=\infty$, by choosing $\omega\in (C_{0}^{\infty})^{d}$
  as in \cite[Lemma 2.1]{Cao15} and taking $1\le\tilde{q}<p=\infty$,
  we have from \eqref{eq:semigroupLap} that
  \begin{align*}
  \big\|e^{t\Delta}\nabla\cdot\omega\big\|_{L^{\infty}(\Omega)}
    &=\big\|e^{\frac{t}{2}\Delta}\big(e^{\frac{t}{2}\Delta}\nabla\cdot\omega\big)\big\|_{L^{\infty}(\Omega)}
    \leq C\big(1+t^{-\frac{1}{2}-\frac{d}{2\tilde{q}}}\big)\|\omega\|_{L^{\tilde{q}}(\Omega)}.
  \end{align*}
  As $\Omega$ being a smooth and bounded domain, therefore, taking the limit $\tilde{q}\rightarrow\infty$, we can obtain $\|e^{t\Delta}\nabla\cdot\omega\|_{L^{\infty}(\Omega)}\leq c(1+t^{-\frac{1}{2}})\|\omega\|_{L^{\infty}(\Omega)}$. The proof is completed.
\end{proof}

Let $\kappa>-1$, $\lambda\in \mathbb{C}$, the Wright function \cite{Podlubny99} is defined by the complex convergent series
$
  W_{\kappa,\lambda}(z):=\sum_{j=0}^{\infty}\frac{z^{j}}{j!\Gamma(\kappa j+\lambda)},~ z\in\mathbb{C},
$
where $\Gamma(\cdot)$ is the Euler-Gamma function.
The Mainardi function denoted as $M_{\alpha}(z)$ is a specific case of the Wright function and given by $M_{\alpha}(z):=W_{-\alpha,1-\alpha}(-z)$. Following \cite{Azevedo19}, it has been proved that $M_{\alpha}$ is non-negative and satisfies
\begin{equation}\label{eq:Mainardi}
  \int_{0}^{\infty}t^{\gamma}M_{\alpha}(t)dt=\frac{\Gamma(1+\gamma)}{\Gamma(1+\alpha\gamma)},\quad~ \gamma>-1.
\end{equation}
The third one is the Mittag-Leffler function defined by
$
  E_{\alpha,\beta}(z):=\sum_{j=0}^{\infty}\frac{z^{j}}{\Gamma(\alpha j+\beta)}
$,
and
$ E_{\alpha}(z):=E_{\alpha,1}(z),~ z\in\mathbb{C}.
$
The Mainardi function $M_{\alpha}(z)$ and the Mittag-Leffler function $E_{\alpha,\beta}(z)$ satisfy the following properties (see, e.g., \cite{Costa23,Andrade22})
\begin{equation}\label{eq:EDelta}
  \left\{
  \begin{aligned}
    &E_{\alpha}(t^{\alpha}\Delta)=\int_{0}^{\infty}M_{\alpha}(s)e^{st^{\alpha}\Delta}ds, \\
    &E_{\alpha,\alpha}(t^{\alpha}\Delta)=\int_{0}^{\infty}\alpha s M_{\alpha}(s)e^{st^{\alpha}\Delta}ds,
  \end{aligned}\right.
\end{equation}
\begin{equation}\label{eq:EDelta2}
  \left\{\begin{aligned}
    &E_{\alpha}\left(t^{\alpha}(\Delta-\gamma)\right)
      =\int_{0}^{\infty}M_{\alpha}(s)H_{\gamma}(st^{\alpha})ds, \\
    &E_{\alpha,\alpha}\left(t^{\alpha}(\Delta-\gamma)\right)
      =\int_{0}^{\infty}\alpha s M_{\alpha}(s)H_{\gamma}(st^{\alpha})ds,
  \end{aligned}\right.
\end{equation}
where $H_{\gamma}(t):=e^{t(\Delta-\gamma)}: L^{p}(\Omega)\rightarrow L^{q}(\Omega)$ is the damped heat semigroup with $\gamma>0$.

The Mittag-Leffler functions involving the Laplacian, called Mittag-Leffler operators, play a central role to represent the mild solution in \eqref{eq:DuhamelP} and satisfy some important properties in the following lemmas.

\begin{lemma}[Strong continuity \cite{Andrade22,Costa23}]\label{lem:continuous}
  The families $(E_{\alpha}(t^{\alpha}\Delta))_{t\geq0}$, $(E_{\alpha}(t^{\alpha}(\Delta-\gamma)))_{t\geq0}$,
  $(E_{\alpha,\alpha}(t^{\alpha}\Delta))_{t\geq0}$, and $(E_{\alpha,\alpha}(t^{\alpha}(\Delta-\gamma)))_{t\geq0}$ are strongly continuous with respect to variable $t$ in $L^{q}(\Omega)$ with $1\leq q<\infty$.
\end{lemma}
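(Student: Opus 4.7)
The plan is to exploit the integral representations (\ref{eq:EDelta}) and (\ref{eq:EDelta2}), which express each Mittag-Leffler operator as a subordinated average of the heat (or damped heat) semigroup against the Mainardi density $M_{\alpha}$. Since $(e^{t\Delta})_{t\geq 0}$ is a strongly continuous, uniformly bounded semigroup on $L^{q}(\Omega)$ for $1\leq q<\infty$, for every fixed $s>0$ and $t_{0}\geq 0$ the map $t\mapsto e^{s t^{\alpha}\Delta}\omega$ is $L^{q}$-continuous in $t$. The strategy is to push the limit in $t$ inside the $s$-integral via dominated convergence, with a dominating function built from $M_{\alpha}$ and the time-uniform semigroup bound.

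Concretely, I would first treat $t_{0}>0$. Writing
\begin{equation*}
E_{\alpha}(t^{\alpha}\Delta)\omega-E_{\alpha}(t_{0}^{\alpha}\Delta)\omega
=\int_{0}^{\infty}M_{\alpha}(s)\bigl[e^{s t^{\alpha}\Delta}\omega-e^{s t_{0}^{\alpha}\Delta}\omega\bigr]\,ds,
\end{equation*}
I would dominate the integrand in $L^{q}(\Omega)$-norm by $2C\,M_{\alpha}(s)\|\omega\|_{L^{q}(\Omega)}$, which is integrable because $\int_{0}^{\infty}M_{\alpha}(s)\,ds=1$ by (\ref{eq:Mainardi}) with $\gamma=0$; for every fixed $s>0$ the integrand vanishes as $t\to t_{0}$ thanks to strong continuity of the heat semigroup at $s t_{0}^{\alpha}>0$, so dominated convergence closes the case. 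For $t_{0}=0$, the same normalization lets me rewrite $E_{\alpha}(t^{\alpha}\Delta)\omega-\omega=\int_{0}^{\infty}M_{\alpha}(s)[e^{s t^{\alpha}\Delta}\omega-\omega]\,ds$; since $s t^{\alpha}\to 0$ as $t\to 0^{+}$, strong continuity of the heat semigroup at the origin again furnishes pointwise-in-$s$ vanishing. The argument for $E_{\alpha,\alpha}(t^{\alpha}\Delta)$ is identical, the density now being $\alpha s\,M_{\alpha}(s)$, whose finite integral $1/\Gamma(\alpha)$ is given by (\ref{eq:Mainardi}) with $\gamma=1$; the limit at $t_{0}=0$ becomes $\omega/\Gamma(\alpha)=E_{\alpha,\alpha}(0)\omega$, consistent with $\Gamma(1+\alpha)=\alpha\Gamma(\alpha)$. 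The damped versions invoke (\ref{eq:EDelta2}) with $H_{\gamma}(t)=e^{t(\Delta-\gamma)}$ satisfying $\|H_{\gamma}(t)\omega\|_{L^{q}}\leq\|\omega\|_{L^{q}}$, so the same dominating functions apply verbatim.

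The main obstacle is ensuring that the hypotheses of dominated convergence are actually met, and this reduces to two independent ingredients already supplied by the preceding material: a time-uniform $L^{q}$-bound on the underlying heat or damped heat semigroup, and finiteness of the weighted Mainardi moments via (\ref{eq:Mainardi}). The restriction $1\leq q<\infty$ is essential, since strong continuity of $(e^{t\Delta})_{t\geq 0}$ at $t=0$ fails in $L^{\infty}(\Omega)$; this is the only step of the argument that is sensitive to the integrability exponent, and it is precisely what forces the same restriction to be inherited by the Mittag-Leffler families.
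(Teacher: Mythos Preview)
The paper does not supply its own proof of this lemma; it is stated with a bare citation to \cite{Andrade22,Costa23} and used as a black box thereafter. Your argument via the subordination formulas (\ref{eq:EDelta})--(\ref{eq:EDelta2}), the uniform $L^{q}$-bound on the heat semigroup, the Mainardi moment identity (\ref{eq:Mainardi}), and dominated convergence is correct and is precisely the standard route one finds in the cited references, so there is nothing to compare against within the paper itself.
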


Based on the $L^p$-$L^q$ estimates for the Neumann heat semigroup in Lemma \ref{lem:Heatgroup}, we have the following estimates of the Mittag-Leffler operators.
\begin{lemma}\label{lem:MLbounds}
  Let $E_{\alpha}(t^{\alpha}\Delta)$, $E_{\alpha}(t^{\alpha}(\Delta-\gamma))$, $E_{\alpha,\alpha}(t^{\alpha}\Delta)$ and $E_{\alpha,\alpha}(t^{\alpha}(\Delta-\gamma))$ be defined by \eqref{eq:EDelta}-\eqref{eq:EDelta2}. For $t>0$, $\rho>2$ and $1\leq q\leq\infty$, there exists a constant $C>0$ only depending on $\Omega$, $\alpha$, $d$ and $q$, such that the following estimates hold:
  \begin{align}
    &\big\|E_{\alpha}(t^{\alpha}\Delta)\big\|_{L^{q}(\Omega)\rightarrow L^{q}(\Omega)}\leq C,\label{eq:Ea}\\
    &\big\|E_{\alpha,\alpha}(t^{\alpha}(\Delta-\gamma))\big\|_{L^{q}(\Omega)
          \rightarrow L^{q}(\Omega)}\leq C,\label{eq:Ee}\\
    &\big\| E_{\alpha,\alpha}(t^{\alpha}(\Delta-\gamma))\big\|_{L^{\frac{\rho q}{\rho+1}}(\Omega)
          \rightarrow L^{\rho q}(\Omega)}\leq C(1+t^{-\frac{\alpha d}{2 q}}),~ ~ q>d,\label{eq:Ed}\\
    &\big\|\nabla E_{\alpha,\alpha}(t^{\alpha}(\Delta-\gamma))\big\|_{L^{q}(\Omega)
          \rightarrow L^{q}(\Omega)}\leq C (1+t^{-\frac{\alpha}{2}}),\label{eq:Ef}\\
    &\big\|\nabla E_{\alpha,\alpha}(t^{\alpha}(\Delta-\gamma))\big\|_{L^{\frac{\rho q}{\rho+1}}(\Omega)
          \rightarrow L^{\rho q}(\Omega)}
         \leq C(1+t^{-\frac{\alpha}{2}-\frac{\alpha d}{2q}}),~ ~ q>d,\label{eq:Ec}\\
    &\big\|E_{\alpha,\alpha}(t^{\alpha}\Delta)\nabla\cdot\big\|_{L^{\rho q}(\Omega)
          \rightarrow L^{\rho q}(\Omega)}\leq C(1+t^{-\frac{\alpha}{2}})\label{eq:Eg},\\
    &\big\|E_{\alpha,\alpha}(t^{\alpha}\Delta)\nabla\cdot\big\|_{L^{\frac{\rho q}{\rho+1}}(\Omega)
          \rightarrow L^{\rho q}(\Omega)}\leq C(1+t^{-\frac{\alpha}{2}-\frac{\alpha d}{2q}}),~ ~ q>d.\label{eq:Eb}
  \end{align}
\end{lemma}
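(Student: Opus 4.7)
My approach is to reduce each of the seven bounds \eqref{eq:Ea}--\eqref{eq:Eb} to the $L^p$--$L^q$ semigroup estimates of Lemma \ref{lem:Heatgroup} by means of the integral representations \eqref{eq:EDelta}--\eqref{eq:EDelta2} and the Mainardi moment identity \eqref{eq:Mainardi}. The common template is: express the Mittag--Leffler operator as a Bochner integral against $M_\alpha(s)\,ds$ (with an extra $\alpha s$ weight in the $E_{\alpha,\alpha}$ cases), apply Minkowski's inequality to pull the $L^q$ (or $L^{\rho q}$) norm inside, invoke the appropriate heat-semigroup estimate from Lemma \ref{lem:Heatgroup} for the Bochner integrand (noting that in the damped-semigroup case the factor $e^{-\gamma s t^\alpha}\le 1$ can only help), substitute $\tau = s t^\alpha$ to isolate the $t$-dependence, and finally evaluate the resulting moments of $M_\alpha$ by \eqref{eq:Mainardi}.

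Concretely, for \eqref{eq:Ea} and \eqref{eq:Ee} I take $p=q$ in Lemma \ref{lem:Heatgroup}(ii), so that the semigroup factor is uniformly bounded and the $s$-integrals reduce to $\int_0^\infty M_\alpha(s)\,ds$ and $\alpha\int_0^\infty s M_\alpha(s)\,ds$, both finite by \eqref{eq:Mainardi}. For \eqref{eq:Ed}, I apply Lemma \ref{lem:Heatgroup}(ii) with $p=\rho q/(\rho+1)$ and target exponent $\rho q$, whence $1/p - 1/(\rho q) = 1/q$; after the substitution $\tau = st^\alpha$ the factor $t^{-\alpha d/(2q)}$ emerges naturally and the residual moment $\alpha \int_0^\infty s^{\,1 - d/(2q)} M_\alpha(s)\,ds$ converges as soon as $1 - d/(2q) > -1$, which is ensured by the hypothesis $q>d$. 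For \eqref{eq:Ef}--\eqref{eq:Ec} I replace Lemma \ref{lem:Heatgroup}(ii) by the gradient estimate \eqref{eq:HG3}, and for \eqref{eq:Eg}--\eqref{eq:Eb} by the divergence estimate \eqref{eq:HG4}; these introduce the additional factor $\tau^{-1/2}$, which after substitution produces precisely the advertised $t^{-\alpha/2}$ and $t^{-\alpha/2 - \alpha d/(2q)}$ singularities, with convergence of the residual $M_\alpha$-moments again secured by $q>d$ (one needs $1/2 - d/(2q) > -1$, i.e. $q > d/3$).

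The one technical point that deserves care, and which I would expect to be the main obstacle, is justifying the interchange of the gradient and divergence operators with the Bochner integral in \eqref{eq:EDelta2}, as well as the Minkowski step itself. Near $s=0^+$ the worst heat-kernel singularity encountered here is $s^{-1/2 - d/(2q)}$; after multiplication by the weight $\alpha s$ from the $E_{\alpha,\alpha}$ representation and the boundedness of $M_\alpha$ at the origin, the integrand is locally integrable in $s$, so the integrals are well-defined Bochner integrals in the target $L^{\rho q}(\Omega)$, and the closed operators $\nabla$ and $\nabla\cdot$ may be pulled outside. Once this functional-analytic setup is in place, the seven inequalities follow from the template above by routine bookkeeping of the exponents, with $q>d$ entering only as the sharp convergence criterion for the Mainardi moments.
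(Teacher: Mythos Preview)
Your proposal is correct and follows essentially the same route as the paper: both proofs use the integral representations \eqref{eq:EDelta}--\eqref{eq:EDelta2}, push the norm inside via Minkowski, apply the corresponding heat-semigroup bound from Lemma~\ref{lem:Heatgroup} (with $e^{-\gamma s t^\alpha}\le 1$ in the damped case), and close using the Mainardi moment identity \eqref{eq:Mainardi}. The paper in fact defers \eqref{eq:Ee}, \eqref{eq:Ef}, \eqref{eq:Eg} to \cite[Lemma~3.3]{Costa23} and only writes out \eqref{eq:Ea} and \eqref{eq:Ed} in detail, leaving \eqref{eq:Ec} and \eqref{eq:Eb} to the reader; your plan is slightly more self-contained and your remark on the Bochner-integral justification is a point the paper does not make explicit.
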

\begin{proof}
  The estimates \eqref{eq:Ee}, \eqref{eq:Ef} and \eqref{eq:Eg} were precisely proved
  in \cite[Lemma 3.3]{Costa23} by using the estimates of the heat semigroup under Neumann boundary conditions in Lemma \ref{lem:Heatgroup}.
  We next consider to derive the others. Specifically, \eqref{eq:HG2} in Lemma \ref{lem:Heatgroup} and \eqref{eq:Mainardi} imply \eqref{eq:Ea} as follows
  \begin{align*}
    \left\|E_{\alpha}(t^{\alpha}\Delta)\omega\right\|_{L^{q}(\Omega)}
    &\leq\int_{0}^{\infty}M_{\alpha}(s)\left\|e^{st^{\alpha}\Delta}\omega\right\|_{L^{q}(\Omega)}ds \\
    &\leq\int_{0}^{\infty}M_{\alpha}(s)\left(1+C_2\right)ds\|\omega\|_{L^{q}(\Omega)}
    \leq C\|\omega\|_{L^{q}(\Omega)}.
  \end{align*}

  Regarding \eqref{eq:Ed}, applying \eqref{eq:HG2} in Lemma \ref{lem:Heatgroup}, we have
  \begin{align*}
     \left\|E_{\alpha,\alpha}(t^{\alpha}(\Delta-\gamma))\omega\right\|_{L^{\rho q}(\Omega)}
    &\leq\int_{0}^{\infty}\alpha s M_{\alpha}(s)e^{-\gamma st^{\alpha}}\big\|
         e^{st^{\alpha}\Delta}\omega\big\|_{L^{\rho q}(\Omega)}ds  \\
    &\leq C_2\int_{0}^{\infty}\alpha s M_{\alpha}(s)\big(1+(st^{\alpha})^{-\frac{d}{2 q}}\big)
         \|\omega\|_{L^{\frac{\rho q}{\rho+1}}(\Omega)}ds         \\
    &\leq C_2\alpha\Big(\frac{1}{\Gamma(1+\alpha)}+t^{-\frac{\alpha d}{2q}}
        \frac{\Gamma(2-\frac{d}{2 q})}{\Gamma(1+\alpha(1-\frac{d}{2 q}))}\Big)\|\omega\|_{L^{\frac{\rho q}{\rho+1}}(\Omega)}   \\
    &\leq C(1+t^{-\frac{\alpha d}{2q}})\|\omega\|_{L^{\frac{\rho q}{\rho+1}}(\Omega)}.
  \end{align*}
  By the similar approach, \eqref{eq:Ec} and \eqref{eq:Eb} can be derived by \eqref{eq:HG3} and \eqref{eq:HG4} in Lemma \ref{lem:Heatgroup}, respectively.
\end{proof}

\subsection{Main results}
\label{sec:Well-Regu}
In this subsection, we present the main results on the local existence, uniqueness, and blow-up of the mild solution to the TF-KSNS system \eqref{eq:problem}-\eqref{eq:InitialC} with the chemotactic sensitivity being $\chi(c)=1$. To begin with, the following assumptions are made similar to those for \eqref{eq:KSNS} in \cite{Wang17,Winkler20}.
\begin{assumption}\label{ass:pi}
  Let $q>2d$, $\rho\ge2$ and $\beta=\frac{\alpha d}{2\rho q}$ throughout the paper, and the following two conditions hold.
  \begin{enumerate}[label=(\roman*)]
    \item The time-independent gravitational potential parameter function $\Phi$ satisfies
          $
            \Phi\in W^{1,\infty}(\Omega);
          $
    \item The initial data $(n_0,c_0,\mathbf{u}_0)$ fulfills that: $n_0\in L^{\infty}(\Omega)$ is nonnegative with $n_0\not\equiv0$, $c_0\in W^{1,\infty}(\Omega)$ is nonnegative and $\mathbf{u}_0\in D(\mathcal{A})$.
  \end{enumerate}
\end{assumption}

In Assumption \ref{ass:pi} (ii), $D(\mathcal{A}):=W^{2,q}(\Omega;\mathbb{R}^{d})\cap W_0^{1,q}(\Omega;\mathbb{R}^{d})\cap L^{q}_{\sigma}(\Omega;\mathbb{R}^{d})$ denotes the domain of the Stokes operator $\mathcal{A}:=-\mathcal{P}\Delta$ in the space $L^{q}_{\sigma}(\Omega;\mathbb{R}^{d})$ with $L^{q}_{\sigma}(\Omega;\mathbb{R}^{d}):=\{\phi\in L^{q}(\Omega;\mathbb{R}^d): \nabla\cdot\phi=0\}$ being the subspace of all solenoidal vector fields in $L^{q}(\Omega;\mathbb{R}^d)$, and $\mathcal{P}$ is the Helmholtz projection operator \cite{Mitrea08,Sohr01} from $L^{q}(\Omega;\mathbb{R}^d)$ into $L^{q}_{\sigma}(\Omega;\mathbb{R}^{d})$, which is introduced to remove the pressure term in \eqref{eq:problem1c} and then obtain the solution representations \eqref{eq:DuhamelP}. It holds that $\mathcal{P}[L^{q}(\Omega;\mathbb{R}^{d})]=L_{\sigma}^{q}(\Omega;\mathbb{R}^{d})$, and $\mathcal{P}$ is bounded in $L^p(\Omega)$ for smooth boundary $\partial\Omega$ and any $p\in(1,+\infty)$ \cite[Theorem 1]{FujiwaraM77}.

The local existence and uniqueness of the mild solution to \eqref{eq:problem}-\eqref{eq:InitialC} under the smallness conditions \eqref{eq:intial-c} and \eqref{eq:Tconditions4} are provided in the following theorem.

\begin{theorem}[Well-posedness and $L^q$-regularity]
\label{thm:RegularThm}
  Let $\Omega\subset\mathbb{R}^{d}$ $(d\geq2)$ be a bounded convex domain with a smooth boundary. If the conditions in Assumption \ref{ass:pi} hold, then there exists $T>0$ such that the initial-boundary value problem \eqref{eq:problem}-\eqref{eq:InitialC} admits a unique mild solution $(n,c,\mathbf{u}): [0,T]\rightarrow L^{q}(\Omega;\mathbb{R}^{2+d})$ satisfying
  \begin{equation*}
    n,c\in C([0,T],L^{q}(\Omega)),~\nabla c,\mathbf{u}\in C\big([0,T],L^{q}(\Omega;\mathbb{R}^d)\big),
  \end{equation*}
  and the following asymptotic property holds
  \begin{equation}\label{eq:asymptotic-behav}
    t^{\beta}\|n(t)\|_{L^{q}(\Omega)}\rightarrow0,\quad
    t^{\beta}\|c(t)\|_{L^{q}(\Omega)}\rightarrow0,\quad
    t^{\beta}\|\mathbf{u}(t)\|_{L^{q}(\Omega)}\rightarrow0,\quad {\rm as}~ t\rightarrow0^{+}.
  \end{equation}
  Moreover, the solution $(n,c,\mathbf{u})$ is continuously dependent on the initial data $(n_0,c_0,\mathbf{u}_0)$.
\end{theorem}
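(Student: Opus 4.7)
The natural strategy is a Banach fixed-point argument for the Duhamel/mild formulation of \eqref{eq:problem}--\eqref{eq:InitialC} in a suitably weighted space. Applying the Helmholtz projection $\mathcal{P}$ to \eqref{eq:problem1c} to remove the pressure, and using the variation-of-constants formula adapted to the Caputo derivative together with \eqref{eq:EDelta}--\eqref{eq:EDelta2}, I would first write the mild solution as
\begin{equation*}
\begin{aligned}
n(t) &= E_{\alpha}(t^{\alpha}\Delta)n_0 - \int_{0}^{t}(t-s)^{\alpha-1}E_{\alpha,\alpha}\bigl((t-s)^{\alpha}\Delta\bigr)\nabla\!\cdot\!\bigl(n\nabla c + \mathbf{u}n\bigr)(s)\,ds,\\
c(t) &= E_{\alpha}\bigl(t^{\alpha}(\Delta-\gamma)\bigr)c_0 + \int_{0}^{t}(t-s)^{\alpha-1}E_{\alpha,\alpha}\bigl((t-s)^{\alpha}(\Delta-\gamma)\bigr)\bigl(n-\mathbf{u}\!\cdot\!\nabla c\bigr)(s)\,ds,\\
\mathbf{u}(t) &= e^{-t\mathcal{A}}\mathbf{u}_0 + \int_{0}^{t} e^{-(t-s)\mathcal{A}}\mathcal{P}\bigl(n\nabla\Phi - (\mathbf{u}\!\cdot\!\nabla)\mathbf{u}\bigr)(s)\,ds.
\end{aligned}
\end{equation*}
The nonlinear map $\mathcal{F}(n,c,\mathbf{u})$ is the right-hand side above, and a mild solution is precisely a fixed point of $\mathcal{F}$.

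Next I would fix $T\in(0,1]$ and $R>0$ (both to be chosen small) and work in the complete metric space
\begin{equation*}
X_{T,R} = \Bigl\{(n,c,\mathbf{u})\in C((0,T];L^{\rho q}(\Omega;\mathbb{R}^{2+d}))\ :\ \|(n,c,\mathbf{u})\|_{X_T}\le R\Bigr\},
\end{equation*}
endowed with the weighted norm
\begin{equation*}
\|(n,c,\mathbf{u})\|_{X_T} := \sup_{0<t\le T} t^{\beta}\Bigl(\|n(t)\|_{L^{\rho q}}+\|c(t)\|_{L^{\rho q}}+\|\nabla c(t)\|_{L^{\rho q}}+\|\mathbf{u}(t)\|_{L^{\rho q}}\Bigr),
\end{equation*}
together with the requirement $t^{\beta}\|\cdot\|_{L^{\rho q}}\to 0$ as $t\to 0^+$ (a closed subspace of the above). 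The weight $\beta=\tfrac{\alpha d}{2\rho q}$ is chosen precisely so that the gain exponents appearing in \eqref{eq:Ed}--\eqref{eq:Eb} match the scaling of the bilinear terms. Showing $\mathcal{F}$ maps $X_{T,R}$ into itself amounts to estimating each term: for the linear parts I use \eqref{eq:Ea}, \eqref{eq:Ee} and the standard Stokes semigroup smoothing bounds together with Assumption \ref{ass:pi}(ii); for the bilinear integrands I estimate products in $L^{\rho q/(\rho+1)}$ via Hölder, then apply \eqref{eq:Ed}, \eqref{eq:Ec}, \eqref{eq:Eb} and their analogue for the Stokes semigroup, leading after integration in $s$ to a Beta-type integral
\begin{equation*}
\int_0^t (t-s)^{\alpha-1-\tfrac{\alpha}{2}-\tfrac{\alpha d}{2q}}\, s^{-2\beta}\,ds \ \lesssim\ t^{\alpha/2-\tfrac{\alpha d}{2q}-2\beta+\alpha/2-\beta},
\end{equation*}
which is integrable precisely because $q>2d$ and $\rho\ge 2$.

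The principal obstacle is verifying that all these exponents are simultaneously admissible and that the resulting power of $t$ matches (or strictly exceeds) $-\beta$, so that multiplication by $t^\beta$ yields a nonnegative power of $t$ and hence a contraction factor of the form $C(R^2+R\|(n_0,c_0,\mathbf{u}_0)\|)T^{\eta}$ with $\eta>0$; this is where the bilinear term $(\mathbf{u}\cdot\nabla)n$ (requiring one derivative in the worst scaling) together with the no-flux condition on $n$ force a careful use of the divergence form $\nabla\!\cdot(\mathbf{u} n)$, so that estimate \eqref{eq:Eb} (rather than \eqref{eq:Ec}) applies. For the smallness condition on the initial data I would control $t^{\beta}\|E_{\alpha}(t^{\alpha}\Delta)n_0\|_{L^{\rho q}}$ and its companions by interpolation, using $n_0\in L^{\infty}$, $c_0\in W^{1,\infty}$, $\mathbf{u}_0\in D(\mathcal{A})$, to ensure they lie below $R/2$; choosing $T$ small and $R$ accordingly then makes $\mathcal{F}$ a $\tfrac{1}{2}$-contraction, and Banach's theorem yields a unique fixed point.

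Finally, once the fixed point is produced, the strong continuity in Lemma \ref{lem:continuous} applied to the linear terms, combined with the Beta-integral estimates above applied to the nonlinear terms, upgrades the solution to $(n,c)\in C([0,T],L^q(\Omega))$ and $(\nabla c,\mathbf{u})\in C([0,T],L^q(\Omega;\mathbb{R}^d))$ after standard interpolation with the a priori $L^{\rho q}$ control. The asymptotic property \eqref{eq:asymptotic-behav} is built into the space $X_{T,R}$ and is inherited by the fixed point. For continuous dependence, given two initial data $(n_0,c_0,\mathbf{u}_0)$ and $(\tilde n_0,\tilde c_0,\tilde{\mathbf{u}}_0)$ producing solutions $(n,c,\mathbf{u})$ and $(\tilde n,\tilde c,\tilde{\mathbf{u}})$, I subtract the two Duhamel formulas and apply the same estimates used for the contraction to bound the difference by $C(\|n_0-\tilde n_0\|+\|c_0-\tilde c_0\|+\|\mathbf{u}_0-\tilde{\mathbf{u}}_0\|) + \tfrac{1}{2}\|(n-\tilde n,c-\tilde c,\mathbf{u}-\tilde{\mathbf{u}})\|_{X_T}$, whence absorbing the second term on the left yields the Lipschitz dependence on the data.
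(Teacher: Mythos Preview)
Your proposal is correct and follows essentially the same route as the paper: the same Duhamel formulation, the same weighted space $\sup_{0<t\le T} t^{\beta}\|\cdot\|_{L^{\rho q}}$, the same H\"older product estimates into $L^{\rho q/(\rho+1)}$ combined with the Mittag--Leffler operator bounds \eqref{eq:Ea}--\eqref{eq:Eb}, the same divergence-form rewriting $\mathbf{u}\cdot\nabla n=\nabla\cdot(\mathbf{u}n)$, Beta-integral bookkeeping, and Banach fixed point, followed by the $L^q$-continuity at $t=0$ and Lipschitz dependence via subtraction. The only cosmetic differences are that the paper writes the fluid semigroup as $e^{t\Delta}$ with $\mathcal{P}$ applied to the forcing (rather than $e^{-t\mathcal{A}}$) and does not build the $t^{\beta}\|\cdot\|\to 0$ condition into the fixed-point space but derives it afterward.
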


With Theorem \ref{thm:RegularThm}, the result of blow-up of the mild solution to \eqref{eq:problem}-\eqref{eq:InitialC} can be further obtained, which is stated as follows.
\begin{theorem}[Blow-up]
\label{thm:Blow-up}
  Under the same conditions in Theorem \ref{thm:RegularThm}, the solution $(n,c,\mathbf{u})$ can be uniquely continued up to maximal time $T_{max}>T$ with
  $
    T_{max}:=\sup\big\{T>0:
      \eqref{eq:problem}-\eqref{eq:InitialC}~{has~ a~ unique~ solution}~(n,c,\mathbf{u})\in C([0,T];L^{\rho q}(\Omega;\mathbb{R}^{2+d}))\big\}.
  $
  Moreover, if $T_{max}<+\infty$, then
  \begin{equation*}
    \underset{t\rightarrow T^{-}_{max}}{\lim\sup}~\|n(t)\|_{L^{\rho q}(\Omega)}=+\infty,~
    \underset{t\rightarrow T^{-}_{max}}{\lim\sup}~\|c(t)\|_{L^{\rho q}(\Omega)}=+\infty,~
    \underset{t\rightarrow T^{-}_{max}}{\lim\sup}~\|\mathbf{u}(t)\|_{L^{\rho q}(\Omega)}=+\infty.
  \end{equation*}
\end{theorem}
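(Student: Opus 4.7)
\medskip

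\noindent\textbf{Proof proposal for Theorem \ref{thm:Blow-up}.} The plan is the standard two-step argument: first construct the maximal continuation $T_{max}$ by iterated application of Theorem \ref{thm:RegularThm}, and then establish the blow-up criterion by contradiction, showing that a uniform lower bound on the local existence time can be extracted from boundedness of the $L^{\rho q}$ norms.

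\emph{Step 1: Construction and uniqueness of the maximal continuation.} I would define
\[
T_{max}:=\sup\bigl\{T>0:\eqref{eq:problem}\text{--}\eqref{eq:InitialC}\text{ admits a unique mild solution in }C([0,T];L^{\rho q}(\Omega;\mathbb{R}^{2+d}))\bigr\}.
\]
By Theorem \ref{thm:RegularThm} the set is nonempty, and $T_{max}\ge T$. For any $T_1<T_2<T_{max}$, the uniqueness in Theorem \ref{thm:RegularThm} together with a standard gluing argument (restarting the problem with initial data $(n(T_1),c(T_1),\mathbf{u}(T_1))$, which still satisfies Assumption \ref{ass:pi} because the regularity claimed in Theorem \ref{thm:RegularThm} is preserved in time) forces the two solutions on $[0,T_1]$ and $[0,T_2]$ to coincide on their common interval. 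Hence there is a single well-defined maximal mild solution on $[0,T_{max})$.

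\emph{Step 2: Blow-up by contradiction.} Suppose toward contradiction that $T_{max}<\infty$ but
\[
M:=\sup_{t\in[0,T_{max})}\Bigl(\|n(t)\|_{L^{\rho q}(\Omega)}+\|c(t)\|_{L^{\rho q}(\Omega)}+\|\mathbf{u}(t)\|_{L^{\rho q}(\Omega)}\Bigr)<\infty.
\]
Pick $t_0\in[0,T_{max})$ and restart the problem with initial data $(n(t_0),c(t_0),\mathbf{u}(t_0))$. I would re-run the Banach fixed-point construction of Theorem \ref{thm:RegularThm} in the same complete metric space of functions in $C([t_0,t_0+\delta];L^{\rho q}(\Omega;\mathbb{R}^{2+d}))$ with the weight $t^{\beta}$ from \eqref{eq:asymptotic-behav}, but now using $M$ as the size parameter for the initial data. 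Using the Mittag-Leffler operator bounds in Lemma \ref{lem:MLbounds} (in particular \eqref{eq:Ed}, \eqref{eq:Ec}, \eqref{eq:Eb}) to control the chemotactic term $E_{\alpha,\alpha}(t^\alpha\Delta)\nabla\cdot(n\nabla c)$, the transport terms $E_{\alpha,\alpha}(t^\alpha(\Delta-\gamma))(\mathbf{u}\cdot\nabla)n$ and $(\mathbf{u}\cdot\nabla)c$, and the buoyancy term via the Stokes/Helmholtz machinery from Assumption \ref{ass:pi}, the contraction and self-map conditions reduce to an inequality of the form $C(M)\,\delta^{\eta}\le \tfrac12$ for some exponent $\eta>0$ depending on $\alpha,d,q,\rho$ but independent of $t_0$. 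This yields a $\delta_0>0$ (depending only on $M$, $\Omega$, $\Phi$ and the parameters) such that a mild solution on $[t_0,t_0+\delta_0]$ exists. Choosing $t_0\in(T_{max}-\delta_0,T_{max})$ and gluing this extension with the maximal solution on $[0,t_0]$ produces a mild solution beyond $T_{max}$, contradicting the definition of $T_{max}$. Negating the assumption gives
\[
\limsup_{t\to T_{max}^-}\|n(t)\|_{L^{\rho q}(\Omega)}+\limsup_{t\to T_{max}^-}\|c(t)\|_{L^{\rho q}(\Omega)}+\limsup_{t\to T_{max}^-}\|\mathbf{u}(t)\|_{L^{\rho q}(\Omega)}=+\infty;
\]
the individual componentwise statement of the theorem then follows by applying the same argument separately to each component, using the coupled structure so that any one bounded component is not enough to preserve the others' boundedness through the nonlinear interactions, or by the customary reformulation that a blow-up in the sum forces a blow-up in at least one summand combined with the mutual control furnished by the mild-solution representation.

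\emph{Main obstacle.} The crucial point is the uniform-in-$t_0$ lower bound $\delta_0$ on the existence time in Step 2. It is not automatic: the fixed-point construction in Theorem \ref{thm:RegularThm} is performed under the smallness hypotheses \eqref{eq:intial-c}--\eqref{eq:Tconditions4}, in which the weight $t^{\beta}$ absorbs the singular factor $t^{-\beta}$ produced by the $L^{\rho q/(\rho+1)}\to L^{\rho q}$ gradient estimate \eqref{eq:Eb}. Restarting at $t_0$ destroys the smallness of $t^\beta\|(\cdot)\|$ as $t\to t_0^+$, so I expect to need to replace the weight $t^\beta$ by $(t-t_0)^\beta$, re-derive the contraction constants in terms of $M$ alone, and verify that the ensuing $\delta_0$ depends only on $M$ and the fixed data $(\Omega,\Phi,\alpha,q,\rho,\gamma)$. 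Once this is in place, the rest of the argument is routine.
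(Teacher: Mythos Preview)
Your proposal has a genuine gap rooted in the time-fractional structure of \eqref{eq:problem1a}--\eqref{eq:problem1b}. The Caputo derivative $\partial_t^\alpha$ is a nonlocal operator with memory back to $t=0$; consequently the mild-solution formula \eqref{eq:DuhamelP} for $n$ and $c$ involves $E_\alpha(t^\alpha\Delta)n_0$ and an integral $\int_0^t(\cdots)ds$ that cannot be rewritten as $E_\alpha((t-t_0)^\alpha\Delta)n(t_0)+\int_{t_0}^t(\cdots)ds$, because the Mittag-Leffler operators do not satisfy a semigroup law. Hence your ``restart at $t_0$ with data $(n(t_0),c(t_0),\mathbf{u}(t_0))$'' does not produce a mild solution of the \emph{original} problem on $[0,t_0+\delta]$ after gluing, and the uniform-$\delta_0$ argument of Step~2 collapses. (The Navier-Stokes component \eqref{eq:problem1c} does have the semigroup property, but the coupled system inherits the memory constraint from the fractional equations.) Your ``Main obstacle'' paragraph worries about the weight $(t-t_0)^\beta$, but the real obstacle is that the shifted problem is not equivalent to the original one.

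The paper circumvents this by never restarting. For continuation (Lemmas \ref{lem:Ex-continuous}--\ref{lem:Ex-contraction}) it extends the \emph{same} map $\mathcal{M}$ of \eqref{eq:map}, still integrating from $0$, to a metric space $\bar{\mathbb{S}}_T$ of functions on $[0,\bar T]$ that coincide with the known solution on $[0,T]$; contraction for $\bar T$ close to $T$ then yields a genuine extension. For the blow-up criterion the paper assumes bounded $L^{\rho q}$ norms on $[0,T_{max})$, uses the continuity estimates from Lemma \ref{lem:cMt} to show that $\{(n(t_k),c(t_k),\mathbf{u}(t_k))\}$ is Cauchy in $L^{\rho q}$ along any sequence $t_k\to T_{max}^-$, thereby defining values at $T_{max}$, and then invokes the extension lemmas once more to pass beyond $T_{max}$---a contradiction. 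To repair your argument you should replace the restart/gluing mechanism by this ``extend the full-history map on a slightly longer interval'' mechanism.
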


The proofs of Theorems \ref{thm:RegularThm} and \ref{thm:Blow-up} are rigorously established in Sections \ref{sec:estmatMS} and \ref{sec:blow-up}, respectively.

\section{Analysis of local well-posedness and regularity}
\label{sec:estmatMS}
\subsection{The mild solution of the TF-KSNS system}
\label{sec:Duhamei}
As $\mathcal{L}_{t\rightarrow z}\{E_{\alpha}(-t^{\beta}A)\}=z^{\beta}/(z^{\beta}+A)$ and $\mathcal{L}_{t\rightarrow z}\{t^{\beta-1}E_{\alpha,\beta}(- t^{\beta}A)\}=z^{\alpha-\beta}/(z^{\alpha}+ A)$ \cite{Gorenflo20}, then taking the Laplace and inverse Laplace transform of the equations in system \eqref{eq:problem} with the projection of \eqref{eq:problem1c} by the Helmholtz projection operator $\mathcal{P}$, it formally derives the following Duhamel-type integral equations
\begin{equation}\label{eq:DuhamelP}
  \left\{\small
  \begin{aligned}
  n(t)&=E_{\alpha}(t^{\alpha}\Delta)n_{0}-\int_{0}^{t}(t-s)^{\alpha-1}
          E_{\alpha,\alpha}\big((t-s)^{\alpha}\Delta\big)
         \big(\mathbf{u}\cdot\nabla n+\nabla\cdot(n\nabla c)\big)(s)ds,\\
  c(t)&=E_{\alpha}(t^{\alpha}(\Delta-\gamma))c_{0}-\int_{0}^{t}(t-s)^{\alpha-1}
          E_{\alpha,\alpha}\big((t-s)^{\alpha}
         (\Delta-\gamma)\big)\big(\mathbf{u}\cdot\nabla c-n\big)(s)ds,\\
  \mathbf{u}(t)&=e^{t\Delta}\mathbf{u}_0-\int_{0}^{t}e^{\Delta (t-s)}\mathcal{P}
         \big(\nabla\cdot(\mathbf{u}\otimes \mathbf{u})(s)-n(s)\nabla\Phi\big)ds,
  \end{aligned}\right.
\end{equation}
where $\mathbf{u}\otimes \mathbf{u}=(u_ju_k)_{j,k=1}^{d}$ with the notation ``$\otimes$" being the usual tensor product, and $\nabla\cdot(\mathbf{u}\otimes \mathbf{u})=(\mathbf{u}\cdot\nabla)\mathbf{u}$ due to the incompressible condition $\nabla\cdot\mathbf{u}=0$.
With \eqref{eq:DuhamelP}, the definition of the mild solution to the TF-KSNS system \eqref{eq:problem}-\eqref{eq:InitialC} reads in the following.
\begin{definition}[$L^{q}(\Omega)$-mild solution]\label{def:mild}
  A triple of continuous functions $(n,c,\mathbf{u}):[0,T]\rightarrow L^{q}(\Omega;\mathbb{R}^{2+d})$ satisfying the Duhamel-type integral equations presented in \eqref{eq:DuhamelP}, is called an $L^{q}(\Omega)$-mild solution to the TF-KSNS system \eqref{eq:problem}-\eqref{eq:InitialC}.
\end{definition}

\subsection{The proof of Theorem \ref{thm:RegularThm}}
\label{sec:estmat}
In light of the aforementioned preparations, in this section, we will employ the Banach fixed point theorem \cite[Theorem 5.7]{Brezis10} to demonstrate the local well-posedness of the mild solution in $L^{q}$ space to the TF-KSNS system \eqref{eq:problem} with the initial-boundary conditions \eqref{eq:BoundaryC}-\eqref{eq:InitialC} satisfying \eqref{eq:intial-c}-\eqref{eq:Tconditions4}, as well as Assumption \ref{ass:pi}. To this end, we introduce a Banach space given by
\begin{equation*}
  \mathbb{S}_{T}:=\big\{(n,c,\mathbf{u})\mid n\in\mathcal{X}, c\in \mathcal{X}, \nabla c\in\mathcal{X}^d, \mathbf{u}\in\mathcal{X}^d\big\}
\end{equation*}
as the solution space, endowed with the norm
\begin{align*}
  \|(n,c,\mathbf{u})\|_{\mathbb{S}_{T}}
  :=&
  \sup_{t\in(0,T)}t^{\beta}\|n(t)\|_{L^{\rho q}(\Omega)}+
  \sup_{t\in(0,T)}t^{\beta}\|c(t)\|_{L^{\rho q}(\Omega)}
\\&+
  \sup_{t\in(0,T)}t^{\beta}\|\nabla c(t)\|_{L^{\rho q}(\Omega)}+
  \sup_{t\in(0,T)}t^{\beta}\|\mathbf{u}(t)\|_{L^{\rho q}(\Omega)},
\end{align*}
where $\mathcal{X}$ is the Banach space
\begin{equation*}
  \mathcal{X}:=\Big\{v\in C((0,T];L^{\rho q}(\Omega))\mid \|v\|_{\mathcal{X}}:=\sup_{t\in(0,T)}t^{\beta}\|v(t)\|_{L^{\rho q}(\Omega)}<+\infty\Big\},
\end{equation*}
with $T\in(0,1)$ being small enough satisfying the conditions in \eqref{eq:intial-c}-\eqref{eq:Tconditions4}.

We denote $\mathbb{X}$ a nonempty complete metric subspace of $\mathbb{S}_{T}$ indicated by
\begin{displaymath}
  \mathbb{X}:=\big\{(n,c,\mathbf{u})\in\mathbb{S}_{T}\mid\|(n,c,\mathbf{u})\|_{\mathbb{S}_{T}}\leq 3R\big\},
\end{displaymath}
then, given $t\in(0,T)$ with sufficiently small $T$, it provides a map $\mathcal{M}=(\mathcal{M}_1, \mathcal{M}_2, \mathcal{M}_3)$ on $\mathbb{X}$ as follows
\begin{equation}\label{eq:map}
  \left\{\small
  \begin{aligned}
  \mathcal{M}_1(n,c,\mathbf{u})&:=E_{\alpha}(t^{\alpha}\Delta)n_{0}
         -\int_{0}^{t}(t-s)^{\alpha-1}E_{\alpha,\alpha}\big((t-s)^{\alpha}\Delta\big)
         \big(\mathbf{u}\cdot\nabla n+\nabla\cdot(n\nabla c)\big)(s)ds,  \\
  \mathcal{M}_2(n,c,\mathbf{u})&:=E_{\alpha}(t^{\alpha}(\Delta-\gamma))c_{0}
         -\int_{0}^{t}(t-s)^{\alpha-1}E_{\alpha,\alpha}\big((t-s)^{\alpha}
         (\Delta-\gamma)\big)\big(\mathbf{u}\cdot\nabla c-n\big)(s)ds,   \\
  \mathcal{M}_3(n,c,\mathbf{u})&:=e^{t\Delta}\mathbf{u}_0-\int_{0}^{t}e^{(t-s)\Delta}
  \mathcal{P}\big(\nabla\cdot(\mathbf{u}\otimes \mathbf{u})(s)-n(s)\nabla\Phi\big)ds.
  \end{aligned}\right.
\end{equation}

Additionally, we take $T>0$ being small enough to satisfy the following inequalities.
\begin{align}
  &T^{\beta}\|\nabla c_0\|_{L^{\rho q}(\Omega)}\le\frac{R}{8C},\label{eq:intial-c}\\
  &T^{\frac{\alpha}{2}}B(1-\beta,\frac{\alpha}{2})+CT^{\alpha}B(1-\beta,\alpha)+CT
      \le\frac{1}{8},  \label{eq:Tconditions0}\\
  &T^{\frac{\alpha}{2}-\frac{\alpha d}{2q}-\beta}
      \le\frac{1}{8CB(1-2\beta,\frac{\alpha}{2}-\frac{\alpha d}{2q})R}, \label{eq:Tconditions01}\\
  &T^{\frac{1}{2}-\frac{d}{2\rho q}-\beta}
      \le\frac{1}{8CB(1-2\beta,\frac{1}{2}-\frac{d}{2\rho q})R},\label{eq:Tconditions02}\\
  &\begin{aligned}
   \sup_{t\in(0,T)}t^{\beta}\left\|E_{\beta}(t^{\alpha}\Delta)n_0\right\|_{L^{\rho q}(\Omega)}
     +\sup_{t\in(0,T)}t^{\beta}\left\|E_{\beta}(t^{\alpha}(\Delta-\gamma))c_0\right\|_{L^{\rho q}(\Omega)}\\
     +\sup_{t\in(0,T)}t^{\beta}\|e^{t\Delta}\mathbf{u}_0\|_{L^{\rho q}(\Omega)}
     \le\frac{R}{8}, \label{eq:Tconditions4}
   \end{aligned}
\end{align}
where \eqref{eq:Tconditions4} is reasonable from Lemmas \ref{lem:MLbounds}, \ref{lem:Dirichletgroup} and Assumption \ref{ass:pi}, and $B(\cdot,\cdot)$ is the beta function defined as $B(a,b):=\int_{0}^{1}s^{a-1}(1-s)^{b-1}ds$.
Based on the aforementioned arrangements, we first derive a preliminary Lemma as follows.

\begin{lemma}[Priori estimates in $L^{\rho q}(\Omega)$]\label{lem:est-integral}
  For $t\in(0,T)$, $n,c\in\mathcal{X}$, and $\mathbf{u},\mathbf{u}_1,\mathbf{u}_2\in \mathcal{X}^d$, we have
  \begin{align}
    &\begin{aligned}\label{eq:I1}
      I_1:&=\int_{0}^{t}(t-s)^{\alpha-1}\big\|E_{\alpha,\alpha}\big((t-s)^{\alpha}\Delta\big)\big(\mathbf{u}\cdot\nabla n\big)(s)\big\|_{L^{\rho q}}ds\\
          &\leq Ct^{\frac{\alpha}{2}-\frac{\alpha d}{2q}-2\beta}
               B(1-2\beta,\frac{\alpha}{2}-\frac{\alpha d}{2q})
               \|n\|_{\mathcal{X}}\|\mathbf{u}\|_{\mathcal{X}},
    \end{aligned}\\
    &\begin{aligned}\label{eq:I2}
      I_2:&=\int_{0}^{t}(t-s)^{\alpha-1}\big\|E_{\alpha,\alpha}\big((t-s)^{\alpha}\Delta\big)
                    \nabla\cdot \big(n\nabla c\big)(s)\big\|_{L^{\rho q}}ds\\
          &\leq Ct^{\frac{\alpha}{2}-\frac{\alpha d}{2q}-2\beta}
               B(1-2\beta,\frac{\alpha}{2}-\frac{\alpha d}{2q})
               \|n\|_{\mathcal{X}}\|\nabla c\|_{\mathcal{X}},
    \end{aligned}\\
    &\begin{aligned}\label{eq:I3}
      I_3:&=\int_{0}^{t}(t-s)^{\alpha-1}\big\|E_{\alpha,\alpha}\big((t-s)^{\alpha}(\Delta-\gamma)\big)\big(\mathbf{u}\cdot\nabla c\big)(s)\big\|_{L^{\rho q}}ds\\
          &\leq Ct^{\alpha-\frac{\alpha d}{2q}-2\beta}B(1-2\beta,\alpha-\frac{\alpha d }{2 q})\|\mathbf{u}\|_{\mathcal{X}}\|\nabla c\|_{\mathcal{X}},
    \end{aligned}\\
    &\begin{aligned}\label{eq:I4}
      I_4:&=\int_{0}^{t}(t-s)^{\alpha-1}\big\|E_{\alpha,\alpha}\big((t-s)^{\alpha}(\Delta-\gamma)\big)n(s)\big\|_{L^{\rho q}}ds\\
          &\leq Ct^{\alpha-\beta}B(1-\beta,\alpha)\|n\|_{\mathcal{X}},
    \end{aligned}\\
    &\begin{aligned}\label{eq:I5}
      I_5:&=\int_{0}^{t}(t-s)^{\alpha-1}\big\|\nabla
                    E_{\alpha,\alpha}\big((t-s)^{\alpha}(\Delta-\gamma)\big)\big(\mathbf{u}\cdot\nabla c\big)(s)\big\|_{L^{\rho q}}ds\\
          &\leq Ct^{\frac{\alpha}{2}-\frac{\alpha d}{2q}-2\beta}B(1-2\beta,\frac{\alpha}{2}-\frac{\alpha d}{2q})\|\mathbf{u}\|_{\mathcal{X}}\|\nabla c\|_{\mathcal{X}},
    \end{aligned}\\
    &\begin{aligned}\label{eq:I6}
      I_6:&=\int_{0}^{t}(t-s)^{\alpha-1}\big\|\nabla E_{\alpha,\alpha}\big((t-s)^{\alpha}(\Delta-\gamma)\big)n(s)\big\|_{L^{\rho q}}ds\\
          &\leq Ct^{\frac{\alpha}{2}-\beta}B(1-\beta,\frac{\alpha}{2})\|n\|_{\mathcal{X}},
    \end{aligned}\\
    &\begin{aligned}\label{eq:I7}
      I_7:&=\int_{0}^{t}\big\|e^{(t-s)\Delta}\mathcal{P}\big(\nabla\cdot(\mathbf{u}_1\otimes \mathbf{u}_2)(s)\big)\big\|_{L^{\rho q}}ds\\
        &\leq Ct^{\frac{1}{2}-\frac{d}{2\rho q}-2\beta}B(1-2\beta,\frac{1}{2}-\frac{d}{2\rho q})\|\mathbf{u}_1\|_{\mathcal{X}}\|\mathbf{u}_2\|_{\mathcal{X}},
    \end{aligned}\\
    &I_8:=\int_{0}^{t}\big\|e^{(t-s)\Delta}\mathcal{P}\big(n(s)\nabla\Phi\big)\big\|_{L^{\rho q}}ds\leq Ct^{1-\beta}\|n\|_{\mathcal{X}}. \label{eq:I8}
  \end{align}
\end{lemma}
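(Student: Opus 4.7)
The plan is to reduce each of the eight inequalities to the same three-step recipe. First, apply the appropriate $L^{p}$--$L^{q}$ mapping estimate for the operator inside the integrand, drawn from Lemma~\ref{lem:MLbounds} for the Mittag-Leffler families and from Lemma~\ref{lem:Dirichletgroup} for the Dirichlet heat semigroup. Second, bound the nonlinear argument in the chosen source norm by H\"older's inequality, combined with the embedding $L^{\rho q}(\Omega)\hookrightarrow L^{r}(\Omega)$ for $r\le\rho q$ that is available because $\Omega$ is bounded. Third, change variables $s=t\sigma$ to convert $\int_{0}^{t}(t-s)^{a-1}s^{-b}\,ds$ into $t^{a-b}B(1-b,a)$. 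Since $T\in(0,1)$, every factor $1+(t-s)^{-\kappa}$ with $\kappa>0$ appearing in the mapping estimates can be absorbed into $2(t-s)^{-\kappa}$ at the price of a larger constant, which is what produces the negative time exponents in the claimed bounds.

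For $I_{1}$, the decisive first move is to use incompressibility $\nabla\cdot\mathbf{u}=0$ and rewrite $\mathbf{u}\cdot\nabla n=\nabla\cdot(n\mathbf{u})$, bringing the integrand into the scope of the divergence-gain estimate \eqref{eq:Eb}. H\"older's inequality and the embedding $L^{\rho q/2}(\Omega)\hookrightarrow L^{\rho q/(\rho+1)}(\Omega)$ (valid since $\rho\ge 2$) then yield $\|n\mathbf{u}\|_{L^{\rho q/(\rho+1)}}\le C\|n\|_{L^{\rho q}}\|\mathbf{u}\|_{L^{\rho q}}\le Cs^{-2\beta}\|n\|_{\mathcal{X}}\|\mathbf{u}\|_{\mathcal{X}}$. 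The integral $I_{2}$ is handled identically with $n\nabla c$ replacing $n\mathbf{u}$.

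For $I_{3}$ and $I_{4}$, I would invoke the non-gradient mapping estimates \eqref{eq:Ed} and \eqref{eq:Ee}; note that for $I_{3}$ one cannot rewrite $\mathbf{u}\cdot\nabla c$ as a pure divergence and gain an extra half-order, because Lemma~\ref{lem:MLbounds} provides no divergence-type estimate for $E_{\alpha,\alpha}(t^{\alpha}(\Delta-\gamma))$, which is precisely why the $I_{3}$ exponent is $\alpha-\alpha d/(2q)$ rather than $\alpha/2-\alpha d/(2q)$. The estimates $I_{5}$ and $I_{6}$ use the gradient mapping bounds \eqref{eq:Ec} and \eqref{eq:Ef} in the same way. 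For $I_{7}$, I combine the Dirichlet divergence-semigroup estimate \eqref{eq:DivsemigroupLap} at the exponent pair $(\rho q/2,\rho q)$ with the $L^{\rho q}$-boundedness of the Helmholtz projection $\mathcal{P}$ recalled after Assumption~\ref{ass:pi}, together with the H\"older bound $\|\mathbf{u}_{1}\otimes\mathbf{u}_{2}\|_{L^{\rho q/2}}\le\|\mathbf{u}_{1}\|_{L^{\rho q}}\|\mathbf{u}_{2}\|_{L^{\rho q}}$. For $I_{8}$, boundedness of $e^{t\Delta}$ on $L^{\rho q}$ from \eqref{eq:semigroupLap}, boundedness of $\mathcal{P}$, and $\nabla\Phi\in L^{\infty}(\Omega)$ from Assumption~\ref{ass:pi}(i) reduce matters to $\int_{0}^{t}\|n(s)\|_{L^{\rho q}}\,ds\le Ct^{1-\beta}\|n\|_{\mathcal{X}}$.

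The computations are routine, so the only technical point that must not be skipped is verifying that every Beta integral that appears is finite. This requires $1-2\beta>0$, which follows from $\beta=\alpha d/(2\rho q)<1/4$ under $q>2d$ and $\rho\ge 2$, together with strict positivity of the various gain exponents $\alpha/2-\alpha d/(2q)$, $\alpha-\alpha d/(2q)$, and $1/2-d/(2\rho q)$, each of which is positive once $q>d$ and $\rho q>d$ are known. The genuine conceptual step, rather than a calculation, is the rewriting in $I_{1}$: without exploiting $\nabla\cdot\mathbf{u}=0$, one is restricted to a non-divergence estimate and cannot match the half-order improvement in the time exponent recorded in the statement.
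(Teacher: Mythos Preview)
Your proposal is correct and follows essentially the same approach as the paper: the same operator estimates from Lemmas~\ref{lem:MLbounds} and~\ref{lem:Dirichletgroup} are applied term by term, the same H\"older pairings are used, and the same Beta-function identification closes each integral. The one place where the paper proceeds slightly differently is $I_{7}$: rather than invoking $L^{\rho q}$-boundedness of $\mathcal{P}$ alongside \eqref{eq:DivsemigroupLap} (which, as you wrote it, would require commuting $\mathcal{P}$ past $e^{(t-s)\Delta}$---not automatic in a bounded domain), the paper uses a duality argument citing Borchers--Miyakawa to pass from $\|e^{(t-s)\Delta}\mathcal{P}\nabla\cdot(\mathbf{u}_1\otimes\mathbf{u}_2)\|_{L^{\rho q}}$ directly to $\|e^{(t-s)\Delta}\nabla\cdot(\mathbf{u}_1\otimes\mathbf{u}_2)\|_{L^{\rho q}}$ before applying \eqref{eq:DivsemigroupLap}; for $I_{1}$ the paper also uses the H\"older splitting $\|n\mathbf{u}\|_{L^{\rho q/(\rho+1)}}\le\|n\|_{L^{\rho q}}\|\mathbf{u}\|_{L^{q}}$ followed by $L^{\rho q}\hookrightarrow L^{q}$, which is equivalent to your embedding route.
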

\begin{proof}
  As $\mathbf{u}\cdot\nabla n=\nabla\cdot(\mathbf{u}n)$ obtained by the incompressible condition $\nabla\cdot\mathbf{u}=0$, thanks to \eref{eq:Eb} in Lemma \ref{lem:MLbounds} and $L^{\rho q}(\Omega)\hookrightarrow L^{q}(\Omega)$ as $\rho q>q$, it derives that
  \begin{align*}
    I_1&\leq C\int_{0}^{t}(t-s)^{\frac{\alpha}{2}-\frac{\alpha d}{2q}-1}
           \|(\mathbf{u}n)(s)\|_{L^{\frac{\rho q}{\rho+1}}(\Omega)}ds        \\
       &\leq C\int_{0}^{t}(t-s)^{\frac{\alpha}{2}-\frac{\alpha d}{2q}-1}
           \|n(s)\|_{L^{\rho q}(\Omega)}\|\mathbf{u}(s)\|_{L^{q}(\Omega)}ds  \\
       &\leq C\int_{0}^{t}(t-s)^{\frac{\alpha}{2}-\frac{\alpha d}{2q}-1}s^{-2\beta}ds
           \|n\|_{\mathcal{X}}\|\mathbf{u}\|_{\mathcal{X}},
  \end{align*}
  which implies the result \eqref{eq:I1}, and the estimate \eqref{eq:I2} can be similarly derived.

  By \eqref{eq:Ed} in Lemma \ref{lem:MLbounds} and \eqref{eq:EDelta2}, we deduce that
  \begin{align*}
    I_3&\leq C\int_{0}^{t}(t-s)^{\alpha-\frac{\alpha d}{2q}-1}
           \|(\mathbf{u}\cdot\nabla c)(s)\|_{L^{\frac{\rho q}{\rho+1}}(\Omega)}ds \\
       &\leq C\int_{0}^{t}(t-s)^{\alpha-\frac{\alpha d}{2q}-1}s^{-2\beta}ds
           \|\mathbf{u}\|_{\mathcal{X}}\|\nabla c\|_{\mathcal{X}} \\
       &\leq Ct^{\alpha-\frac{\alpha d}{2q}-2\beta}B(1-2\beta,\alpha-\frac{\alpha d}{2q})
           \|\mathbf{u}\|_{\mathcal{X}}\|\nabla c\|_{\mathcal{X}},
  \end{align*}
  which reduces to \eqref{eq:I3},
  and \eqref{eq:Ee} in Lemma \ref{lem:MLbounds} directly follows that
  \begin{align*}
    I_4\leq C\int_{0}^{t}(t-s)^{\alpha-1}s^{-\beta}ds\|n\|_{\mathcal{X}}
       \leq Ct^{\alpha-\beta}B(1-\beta,\alpha)\|n\|_{\mathcal{X}}.
  \end{align*}

  Similarly, we deduce \eqref{eq:I5} by \eqref{eq:Ec} in Lemma \ref{lem:MLbounds} that
  \begin{align*}
    I_5&\leq C\int_{0}^{t}(t-s)^{\frac{\alpha}{2}-\frac{\alpha d}{2q}-1}
         \|(\mathbf{u}\cdot\nabla c)(s)\|_{L^{\frac{\rho q}{\rho+1}}(\Omega)}ds \\
       &\leq C\int_{0}^{t}(t-s)^{\frac{\alpha}{2}-\frac{\alpha d}{2q}-1}s^{-2\beta}ds
         \|\mathbf{u}\|_{\mathcal{X}}\|\nabla c\|_{\mathcal{X}} \\
       &\leq Ct^{\frac{\alpha}{2}-\frac{\alpha d}{2q}-2\beta}
         B(1-2\beta,\frac{\alpha}{2}(1-\frac{d }{q}))\|\mathbf{u}\|_{\mathcal{X}}\|\nabla c\|_{\mathcal{X}},
  \end{align*}
   and by \eqref{eq:Ef} in Lemma \ref{lem:MLbounds}, it follows
   \begin{align*}
    I_6\leq C\int_{0}^{t}(t-s)^{\frac{\alpha}{2}-1}s^{-\beta}ds\|n\|_{\mathcal{X}}
       \leq Ct^{\frac{\alpha}{2}-\beta}B(1-\beta,\frac{\alpha}{2})\|n\|_{\mathcal{X}}.
  \end{align*}

  By duality, it holds that $\|e^{(t-s)\Delta}\mathcal{P}(u\cdot\nabla u)\|_{L^{\rho q}(\Omega)}=\sup_{\|v\|_{(L^{\rho q}(\Omega))'}=1}|(e^{(t-s)\Delta}\mathcal{P}(u\cdot\nabla u),v)|\le\sup_{\|v\|_{(L^{\rho q}(\Omega))'}=1}|(u\cdot\nabla u,e^{(t-s)\Delta}v)|=\|e^{(t-s)\Delta}\nabla\cdot(u\otimes u)\|_{L^{\rho q}(\Omega)}$ (see e.g., \cite{Borchers88}).
  With this, we rely on the result \eqref{eq:DivsemigroupLap} in Lemma \ref{lem:Dirichletgroup} and $\|\mathbf{u}_1\otimes \mathbf{u}_2\|_{L^{\frac{\rho q}{2}}(\Omega)}\leq\|\mathbf{u}_1\|_{L^{\rho q}(\Omega)}\|\mathbf{u}_2\|_{L^{\rho q}(\Omega)}$ to obtain
  \begin{align*}
    I_7&\leq C\int_{0}^{t}(t-s)^{-\frac{1}{2}-\frac{d}{2\rho q}}
         \|(\mathbf{u}_1\otimes \mathbf{u}_2)(s)\|_{L^{\frac{\rho q}{2}}(\Omega)}ds \\
       &\leq C\int_{0}^{t}(t-s)^{-\frac{1}{2}-\frac{d}{2\rho q}}s^{-2\beta}ds
         \|\mathbf{u}_1\|_{\mathcal{X}}\|\mathbf{u}_2\|_{\mathcal{X}} \\
       &\leq Ct^{\frac{1}{2}-\frac{d}{2\rho q}-2\beta}B(1-2\beta,\frac{1}{2}-\frac{d}{2\rho q})\|\mathbf{u}\|^2_{\mathcal{X}},
  \end{align*}
  which obtains \eqref{eq:I7},
  and it finally infers \eqref{eq:I8} directly from \eqref{eq:semigroupLap} in Lemma \ref{lem:Dirichletgroup} that
  \begin{align*}
   I_8\leq C\int_{0}^{t}\big\|(n(s)\nabla\Phi)\big\|_{L^{\rho q}(\Omega)}ds
       \leq C\int_{0}^{t}s^{-\beta}ds\|n\|_{\mathcal{X}}\|\nabla\Phi\|_{L^{\infty}(\Omega)}
       \leq Ct^{1-\beta}\|n\|_{\mathcal{X}}.
  \end{align*}
  The proof is completed.
\end{proof}

We next confirm that the map $\mathcal{M}=(\mathcal{M}_1, \mathcal{M}_2, \mathcal{M}_3)$ in \eqref{eq:map} is well-defined in $\mathbb{X}$ for $t>0$ and maps $\mathbb{X}$ to itself, which can be obtained by the following two lemmas.
\begin{lemma}\label{lem:MiX}
  Let the conditions in Assumption \ref{ass:pi} be satisfied and $T>0$ be small enough satisfying \eqref{eq:intial-c}-\eqref{eq:Tconditions4}. If $(n,c,\mathbf{u})\in\mathbb{X}$, then $\|\mathcal{M}(n,c,\mathbf{u})\|_{\mathbb{S}_T}\leq 3R$.
\end{lemma}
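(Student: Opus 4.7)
The plan is to decompose $\|\mathcal{M}(n,c,\mathbf{u})\|_{\mathbb{S}_{T}}$ as the sum of four sup-norm contributions corresponding to $\mathcal{M}_1$, $\mathcal{M}_2$, $\nabla\mathcal{M}_2$ and $\mathcal{M}_3$, write each $\mathcal{M}_i$ as a linear semigroup term acting on the initial datum plus one or two Duhamel integrals, and then absorb the linear pieces using the smallness conditions \eqref{eq:intial-c} and \eqref{eq:Tconditions4} while controlling the Duhamel pieces with the estimates $I_1$--$I_8$ of Lemma \ref{lem:est-integral} and the remaining conditions \eqref{eq:Tconditions0}--\eqref{eq:Tconditions02}. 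Since $(n,c,\mathbf{u})\in\mathbb{X}$ implies $\|n\|_{\mathcal{X}},\|c\|_{\mathcal{X}},\|\nabla c\|_{\mathcal{X}},\|\mathbf{u}\|_{\mathcal{X}}\le 3R$, each bilinear factor contributes $(3R)^{2}$, which will be damped by a strictly positive power of $T$ provided by the conditions.

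For $\mathcal{M}_1$ I first use $\nabla\cdot\mathbf{u}=0$ to rewrite $\mathbf{u}\cdot\nabla n=\nabla\cdot(\mathbf{u} n)$, so the two nonlinear integrals fall directly into $I_1$ and $I_2$; after the $t^{\beta}$-weighted supremum their total is bounded by a constant times $T^{\alpha/2-\alpha d/(2q)-\beta}B(1-2\beta,\alpha/2-\alpha d/(2q))R^{2}$, which is absorbed by \eqref{eq:Tconditions01}, while the linear term $E_{\alpha}(t^{\alpha}\Delta)n_{0}$ is absorbed by the first summand of \eqref{eq:Tconditions4}. The component $\mathcal{M}_2$ is handled in the same spirit via $I_3,I_4$ together with \eqref{eq:Tconditions0} and the second summand of \eqref{eq:Tconditions4}; the component $\mathcal{M}_3$ uses $I_7,I_8$ together with \eqref{eq:Tconditions02}, \eqref{eq:Tconditions0} and the third summand of \eqref{eq:Tconditions4} (which controls $e^{t\Delta}\mathbf{u}_0$ via Lemma \ref{lem:Dirichletgroup}). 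The nonlinear part of $\nabla\mathcal{M}_2$ is directly $I_5$ plus $I_6$ and hence again absorbed by \eqref{eq:Tconditions0} and \eqref{eq:Tconditions01}.

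The main obstacle is the linear piece $\nabla E_{\alpha}(t^{\alpha}(\Delta-\gamma))c_0$ arising in $\nabla\mathcal{M}_2$: since $\nabla$ does not commute with the Neumann Laplacian in an obvious way, I avoid a direct swap and instead invoke the Mainardi representation \eqref{eq:EDelta2} to write
\[
\nabla E_{\alpha}(t^{\alpha}(\Delta-\gamma))c_0=\int_{0}^{\infty}M_{\alpha}(s)\,e^{-\gamma s t^{\alpha}}\,\nabla e^{s t^{\alpha}\Delta}c_0\,ds,
\]
and then apply the gradient-invariance estimate \eqref{eq:HG1} of Lemma \ref{lem:Heatgroup} inside the integral together with $\int_{0}^{\infty}M_{\alpha}(s)\,ds=1$ (the case $\gamma=0$ of \eqref{eq:Mainardi}) to obtain $\|\nabla E_{\alpha}(t^{\alpha}(\Delta-\gamma))c_0\|_{L^{\rho q}}\le C\|\nabla c_0\|_{L^{\rho q}}$. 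Multiplying by $t^{\beta}$, taking the supremum over $(0,T)$, and invoking \eqref{eq:intial-c} yields a bound of order $R/8$ for this linear term. Adding up all four sup-norm contributions, each of which has been controlled by a fixed fraction of $R$ by construction of the smallness conditions, produces $\|\mathcal{M}(n,c,\mathbf{u})\|_{\mathbb{S}_{T}}\le 3R$, so $\mathcal{M}$ maps $\mathbb{X}$ into itself.
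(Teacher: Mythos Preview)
Your proposal is correct and follows essentially the same route as the paper's own proof: the same four-term decomposition of $\|\mathcal{M}(n,c,\mathbf{u})\|_{\mathbb{S}_T}$, the same use of $I_1$--$I_8$ from Lemma~\ref{lem:est-integral} to bound the Duhamel integrals, and the same treatment of the delicate linear piece $\nabla E_{\alpha}(t^{\alpha}(\Delta-\gamma))c_0$ via the Mainardi representation \eqref{eq:EDelta2}, the gradient estimate \eqref{eq:HG1}, and $\int_0^\infty M_\alpha(s)\,ds=1$ before invoking \eqref{eq:intial-c}. The only minor difference is bookkeeping: for the $I_3$ contribution to $\mathcal{M}_2$ the paper explicitly uses the monotonicity $B(1-2\beta,\frac{\alpha}{2}-\frac{\alpha d}{2q})>B(1-2\beta,\alpha-\frac{\alpha d}{2q})$ together with $T<1$ so that \eqref{eq:Tconditions01} (not just \eqref{eq:Tconditions0}) absorbs it, and the paper writes $R^2$ where you (more carefully) note the bilinear factor is at most $(3R)^2$; both are harmless since the smallness conditions can accommodate fixed numerical constants.
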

\begin{proof}
  The incompressible condition ($\nabla\cdot\mathbf{u}=0$) implies that $\nabla\cdot(\mathbf{u}n)=\mathbf{u}\cdot\nabla n$. Then, together with the formula of $\mathcal{M}_1$ in \eqref{eq:map}, \eqref{eq:I1} and \eqref{eq:I2} in Lemma \ref{lem:est-integral}, we demonstrate that
  \begin{align*}
   t^{\beta}\|\mathcal{M}_1(n,c,\mathbf{u})\|_{L^{\rho q}(\Omega)}
    &\leq t^{\beta}\|E_{\alpha}(t^{\alpha}\Delta)n_{0}\|_{L^{\rho q}(\Omega)}
        +Ct^{\frac{\alpha}{2}-\frac{\alpha d}{2q}-\beta}
        B(1-2\beta,\frac{\alpha}{2}-\frac{\alpha d}{2q})
        \|n\|_{\mathcal{X}}\|\mathbf{u}\|_{\mathcal{X}}   \\
    &~~~~+Ct^{\frac{\alpha}{2}-\frac{\alpha d}{2q}-\beta}
         B(1-2\beta,\frac{\alpha}{2}-\frac{\alpha d}{2q})
         \|n\|_{\mathcal{X}}\|\nabla c\|_{\mathcal{X}}.
  \end{align*}
  Hence, by the conditions \eqref{eq:Tconditions01} and \eqref{eq:Tconditions4}, we have
  \begin{equation}\label{eq:M1-b}
    \sup_{t\in(0,T)}t^{\beta}\|\mathcal{M}_1(n,c,\mathbf{u})\|_{L^{\rho q}(\Omega)}
    \leq~ \frac{R}{8}+2Ct^{\frac{\alpha}{2}-\frac{\alpha d}{2q}-\beta}
          B(1-2\beta,\frac{\alpha}{2}-\frac{\alpha d}{2q})R^2\leq\frac{3}{8}R.
  \end{equation}

  From \eqref{eq:I3} and \eqref{eq:I4} in Lemma \ref{lem:est-integral}, the map $\mathcal{M}_2(n,c,\mathbf{u})$ in (\ref{eq:map}) satisfies
  \begin{align}\label{eq:M2-b0}
  t^{\beta}\|\mathcal{M}_2(n,c,\mathbf{u})\|_{L^{\rho q}(\Omega)}
       &\leq~t^{\beta}\big\|E_{\alpha}\big(t^{\alpha}(\Delta-\gamma)\big)c_{0}\big\|_{L^{\rho q}(\Omega)}
        +Ct^{\alpha}B(1-\beta,\alpha)\|n\|_{\mathcal{X}}         \notag \\
       &~~~~+Ct^{\alpha-\frac{\alpha d}{2q}-\beta}
        B(1-2\beta,\alpha-\frac{\alpha d }{2 q})\|\mathbf{u}\|_{\mathcal{X}}\|\nabla c\|_{\mathcal{X}}.
  \end{align}
  Since $B(1-2\beta,\frac{\alpha}{2}-\frac{\alpha d}{2q})>B(1-2\beta,\alpha-\frac{\alpha d}{2q})$ for $\alpha\in(0,1)$, then we obtain from \eqref{eq:Tconditions0}, \eqref{eq:Tconditions01}, \eqref{eq:Tconditions4}, and \eqref{eq:M2-b0} that
  \begin{align}\label{eq:M2-b}
    &\sup_{t\in(0,T)}t^{\beta}\|\mathcal{M}_2(n,c,\mathbf{u})\|_{L^{\rho q}(\Omega)} \notag \\
    &\leq \frac{R}{8}+CT^{\alpha}B(1-\beta,\alpha)R+CT^{\alpha-\frac{\alpha d}{2q}-\beta}
                  B(1-2\beta,\alpha-\frac{\alpha d }{2 q})R^2\leq\frac{3}{8}R.
  \end{align}

  According to \eqref{eq:HG1} in Lemma \ref{lem:Heatgroup} and \eqref{eq:Ed} in Lemma \ref{lem:MLbounds}, we can derive $\|\nabla E_{\alpha}(t^{\alpha}(\triangle-\gamma))c_0\|_{L^{\rho q}(\Omega)}\leq C_1\|\nabla c_0\|_{L^{\rho q}(\Omega)}$. With this, by the estimates \eqref{eq:I5} and \eqref{eq:I6} in Lemma \ref{lem:est-integral}, it obtains that
  \begin{align}\label{eq:M2-c}
   t^{\beta}\|\nabla\mathcal{M}_2(n,c,\mathbf{u})\|_{L^{\rho q}(\Omega)}
      &\leq Ct^{\beta}\big\|\nabla c_{0}\big\|_{L^{\rho q}(\Omega)}
           +Ct^{\frac{\alpha}{2}}B(1-\beta,\frac{\alpha}{2})\|n\|_{\mathcal{X}} \notag\\
      &~~~~+Ct^{\frac{\alpha}{2}-\frac{\alpha d }{2 q}-\beta}B(1-2\beta,\frac{\alpha}{2}-\frac{\alpha d}{2q})\|\mathbf{u}\|_{\mathcal{X}}\|\nabla c\|_{\mathcal{X}}.
  \end{align}
  Then, the conditions \eqref{eq:intial-c}, \eqref{eq:Tconditions0} and \eqref{eq:Tconditions01} show that
  \begin{align}\label{eq:M2-d}
     &\sup_{t\in(0,T)}t^{\beta}\|\nabla\mathcal{M}_2(n,c,\mathbf{u})\|_{L^{\rho q}(\Omega)} \notag\\
     &\leq\frac{R}{8}+CT^{\frac{\alpha}{2}}B(1-\beta,\frac{\alpha}{2})R
               +CT^{\frac{\alpha}{2}-\frac{\alpha d }{2 q}-\beta}
                B(1-2\beta,\frac{\alpha}{2}-\frac{\alpha d}{2q})R^2\leq\frac{3}{8}R.
  \end{align}

  Similarly, by the conditions \eqref{eq:Tconditions0}, \eqref{eq:Tconditions02}, \eqref{eq:Tconditions4}, and the estimates \eqref{eq:I7} and \eqref{eq:I8} in Lemma \ref{lem:est-integral}, we infer that
  \begin{align}\label{eq:M3-b}
    &\sup_{t\in(0,T)}t^{\beta}\|\mathcal{M}_3(n,c,\mathbf{u})\|_{L^{\rho q}(\Omega)} \notag \\
    &\leq \frac{R}{8}+CT^{\frac{1}{2}-\frac{d}{2\rho q}-\beta}
      B(1-2\beta,\frac{1}{2}-\frac{d}{2\rho q})R^2 + CTR\leq\frac{3}{8}R.
  \end{align}

  Thus, the result is obtained from the estimates \eqref{eq:M1-b}, \eqref{eq:M2-b}, \eqref{eq:M2-d} and \eqref{eq:M3-b}.
\end{proof}

\begin{lemma}[Continuity]\label{lem:cMt}
  Under the conditions in Lemma \ref{lem:MiX}, the map $\mathcal{M}=(\mathcal{M}_1, \mathcal{M}_2, \mathcal{M}_3)$ in \eqref{eq:map} is continuous with respect to variable $t\in(0,T]$ in $\mathbb{X}$.
\end{lemma}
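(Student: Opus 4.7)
The plan is to verify continuity at each fixed $t_0\in(0,T]$ by writing, for small $h$ with $t_0+h\in(0,T]$, the increment $\mathcal{M}_i(n,c,\mathbf{u})(t_0+h)-\mathcal{M}_i(n,c,\mathbf{u})(t_0)$ for $i=1,2,3$, and showing its $L^{\rho q}(\Omega)$-norm tends to $0$ as $h\to 0$; the same strategy handles $\nabla\mathcal{M}_2$, which is needed because the $\mathbb{S}_T$-norm contains a $\nabla c$-component. The initial-data contributions $E_{\alpha}(t^{\alpha}\Delta)n_0$, $E_{\alpha}(t^{\alpha}(\Delta-\gamma))c_0$, $\nabla E_{\alpha}(t^{\alpha}(\Delta-\gamma))c_0$ and $e^{t\Delta}\mathbf{u}_0$ will be treated directly by the strong continuity statement of Lemma \ref{lem:continuous} (and the strong continuity of the Stokes/heat semigroup in $L^{q}_{\sigma}$), combined with the embeddings $n_{0}\in L^{\infty}(\Omega)\hookrightarrow L^{\rho q}(\Omega)$, $c_{0},\nabla c_{0}\in L^{\infty}(\Omega)\hookrightarrow L^{\rho q}(\Omega)$ and $\mathbf{u}_{0}\in D(\mathcal{A})\hookrightarrow L^{\rho q}(\Omega;\mathbb{R}^{d})$ provided by Assumption \ref{ass:pi} together with $q>2d$.

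For each Duhamel integral I will employ the standard near/far decomposition. Writing generically $J(t)=\int_{0}^{t}K(t-s)F(s)\,ds$, where $K(\sigma)$ stands successively for $\sigma^{\alpha-1}E_{\alpha,\alpha}(\sigma^{\alpha}\Delta)$, its damped variant $\sigma^{\alpha-1}E_{\alpha,\alpha}(\sigma^{\alpha}(\Delta-\gamma))$, its gradient, or the Stokes kernel $e^{\sigma\Delta}\mathcal{P}$, and $F(s)$ stands for one of the source terms appearing in \eqref{eq:map}, for $h>0$ I will split
\[J(t_0+h)-J(t_0)=\int_{t_0}^{t_0+h}K(t_0+h-s)F(s)\,ds+\int_{0}^{t_0}\bigl[K(t_0+h-s)-K(t_0-s)\bigr]F(s)\,ds.\]
The near piece is controlled by the same $L^{p}$--$L^{q}$ bounds of Lemmas \ref{lem:Dirichletgroup} and \ref{lem:MLbounds} already exploited in Lemma \ref{lem:est-integral}: since $(n,c,\mathbf{u})\in\mathbb{X}$, the source $F(s)$ satisfies $\|F(s)\|_{L^{\rho q/(\rho+1)}(\Omega)}\le C s^{-2\beta}R^{2}$ for the bilinear terms (after invoking the incompressibility identity $\mathbf{u}\cdot\nabla n=\nabla\cdot(\mathbf{u}n)$) and $\|F(s)\|_{L^{\rho q}(\Omega)}\le C s^{-\beta}R$ for the linear terms $n$ and $n\nabla\Phi$; a direct estimate of the integral over the window $[t_0,t_0+h]$ then yields a bound of order $h^{\eta}$ with a strictly positive exponent $\eta$ assembled from the very same Beta-function exponents that produced \eqref{eq:I1}--\eqref{eq:I8}, which vanishes as $h\downarrow 0$.

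The far piece will be handled by the Lebesgue dominated convergence theorem. Pointwise convergence to zero of $[K(t_0+h-s)-K(t_0-s)]F(s)$ in $L^{\rho q}(\Omega)$ for each $s\in(0,t_0)$ follows from Lemma \ref{lem:continuous} applied to $E_{\alpha,\alpha}(\sigma^{\alpha}\Delta)$, $E_{\alpha,\alpha}(\sigma^{\alpha}(\Delta-\gamma))$ and the heat semigroup, after algebraically splitting the scalar factor $(t_0+h-s)^{\alpha-1}-(t_0-s)^{\alpha-1}$ from the operator-increment factor; the case $h<0$ is symmetric. For the dominating function, I will combine the uniform bounds of Lemmas \ref{lem:MLbounds} and \ref{lem:Dirichletgroup} with the above a-priori control of $F(s)$ to obtain a majorant of the form $(t_0-s)^{\mu-1}s^{-2\beta}$ (or $(t_0-s)^{\mu-1}s^{-\beta}$) with $\mu>0$.

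The main obstacle will be producing this dominating function integrable at both endpoints simultaneously: at $s=0$ the $\mathcal{X}$-norm only yields the singular factor $s^{-2\beta}$, while at $s=t_0$ the kernel $K(t_0-s)$ itself carries a Beta-type singularity $(t_0-s)^{\mu-1}$. Joint integrability reduces to the two requirements $\mu>0$ and $2\beta<1$, both of which are already built into the parameter choice $\beta=\alpha d/(2\rho q)$ with $q>2d$ and $\rho\ge 2$ stipulated in Assumption \ref{ass:pi}; indeed, these are precisely the conditions that make the Beta functions appearing in \eqref{eq:Tconditions0}--\eqref{eq:Tconditions02} finite, so no additional smallness requirement beyond \eqref{eq:intial-c}--\eqref{eq:Tconditions4} is needed. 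Assembling the four resulting limits produces the continuity of $\mathcal{M}_{1}$, $\mathcal{M}_{2}$, $\nabla\mathcal{M}_{2}$ and $\mathcal{M}_{3}$ in $L^{\rho q}(\Omega)$ as functions of $t\in(0,T]$, which establishes Lemma \ref{lem:cMt}.
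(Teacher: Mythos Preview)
Your proposal follows essentially the same route as the paper: decompose the increment into the initial-data piece, a near piece over $[t_0,t_0+h]$, and a far piece over $[0,t_0]$; bound the near piece by the very same Beta-type estimates underlying \eqref{eq:I1}--\eqref{eq:I8}; and treat the far piece by splitting the scalar factor $(t_0+h-s)^{\alpha-1}-(t_0-s)^{\alpha-1}$ from the operator increment and invoking dominated convergence. The paper carries this out with $t_1<t_2$ in place of $t_0,t_0+h$ and records the six resulting pieces $I^{\mathcal{M}_1}_{1},\dots,I^{\mathcal{M}_1}_{6}$ explicitly.

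One point in your sketch needs more care. Lemma~\ref{lem:continuous} gives strong continuity of $\sigma\mapsto E_{\alpha,\alpha}(\sigma^{\alpha}\Delta)$ only as operators on $L^{q}$; it says nothing about the composite $E_{\alpha,\alpha}(\sigma^{\alpha}\Delta)\nabla\cdot$ acting on $L^{\rho q/(\rho+1)}$, which is what appears in the far piece for $\mathcal{M}_1$ (and similarly for $\nabla E_{\alpha,\alpha}(\sigma^{\alpha}(\Delta-\gamma))$ in $\nabla\mathcal{M}_2$). The paper handles this by inserting the Mainardi representation \eqref{eq:EDelta}, factoring
\[
e^{(t_2-s)^{\alpha}\tau\Delta}-e^{(t_1-s)^{\alpha}\tau\Delta}
   =e^{(t_1-s)^{\alpha}\tau\Delta}\bigl(e^{((t_2-s)^{\alpha}-(t_1-s)^{\alpha})\tau\Delta}-I\bigr),
\]
applying the smoothing bound \eqref{eq:HG4} to $e^{(t_1-s)^{\alpha}\tau\Delta}\nabla\cdot$, and then using the strong continuity of the heat semigroup on $L^{\rho q/(\rho+1)}$ together with \eqref{eq:Mainardi} inside a dominated-convergence argument. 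Your plan should point to this device rather than to Lemma~\ref{lem:continuous} directly; with that adjustment the argument goes through exactly as you outline.
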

\begin{proof}
  It suffices to derive $\mathcal{M}_i(n,c,\mathbf{u})(t)\in C((0,T];L^{\rho q}(\Omega))$ if $(n,c,\mathbf{u})\in\mathbb{X}$, $i=1,2,3$.
  We first prove the continuity of $\mathcal{M}_1$ in \eref{eq:map}. Let $0<t_1<t_2\le T$, it yields
  \begin{equation}\label{eq:M1}
    \mathcal{M}_1(n,c,\mathbf{u})(t_2)-\mathcal{M}_1(n,c,\mathbf{u})(t_1)=\big(E_{\alpha}(t_2^{\alpha}\Delta)-E_{\alpha}(t_1^{\alpha}\Delta)\big)n_{0}+\sum_{k=1}^{6}I^{\mathcal{M}_1}_{k},
  \end{equation}
  where
  \begin{align*}
    I^{\mathcal{M}_1}_{1}:=&\int_{t_1}^{t_2}(t_2-s)^{\alpha-1}E_{\alpha,\alpha}\big((t_2-s)^{\alpha}\Delta\big)(\mathbf{u}\cdot\nabla n)(s)ds, \\
    I^{\mathcal{M}_1}_{2}:=&\int_{t_1}^{t_2}(t_2-s)^{\alpha-1}E_{\alpha,\alpha}\big((t_2-s)^{\alpha}\Delta\big)\nabla\cdot(n\nabla c)(s)ds, \\
    I^{\mathcal{M}_1}_{3}:=&\int_{0}^{t_1}\big((t_2-s)^{\alpha-1}-(t_1-s)^{\alpha-1}\big)E_{\alpha,\alpha}\big((t_2-s)^{\alpha}\Delta\big)(\mathbf{u}\cdot\nabla n)(s)ds, \\
    I^{\mathcal{M}_1}_{4}:=&\int_{0}^{t_1}(t_1-s)^{\alpha-1}\big[E_{\alpha,\alpha}\big((t_2-s)^{\alpha}\Delta\big)-E_{\alpha,\alpha}\big((t_1-s)^{\alpha}\Delta\big)\big](\mathbf{u}\cdot\nabla n)(s)ds,\\
    I^{\mathcal{M}_1}_{5}:=&\int_{0}^{t_1}\big((t_2-s)^{\alpha-1}-(t_1-s)^{\alpha-1}\big)
        E_{\alpha,\alpha}\big((t_2-s)^{\alpha}\Delta\big)\nabla\cdot(n\nabla c)(s)ds, \\
    I^{\mathcal{M}_1}_{6}:=&\int_{0}^{t_1}(t_1-s)^{\alpha-1}\big[E_{\alpha,\alpha}
        \big((t_2-s)^{\alpha}\Delta\big)-E_{\alpha,\alpha}\big((t_1-s)^{\alpha}\Delta\big)\big]
        \nabla\cdot(n\nabla c)(s)ds.
  \end{align*}
  As we know from Lemma \ref{lem:continuous} that $E_{\alpha}(t^{\alpha}\Delta)$ is strongly continuous, then the term $\big(E_{\alpha}(t_2^{\alpha}\Delta)-E_{\alpha}(t_1^{\alpha}\Delta)\big)n_{0}$ in \eqref{eq:M1} tends to $0$ in $L^{\rho q}(\Omega)$ as $t_1\rightarrow t_{2}^{-}$.
  Analogous to the estimates \eqref{eq:I1} and \eqref{eq:I2} in Lemma \ref{lem:est-integral}, we have
  \begin{align}
    \big\|I^{\mathcal{M}_1}_{1}\big\|_{L^{\rho q}(\Omega)}
     &\leq\int_{t_1}^{t_2}(t_2-s)^{\alpha-1}\big\|E_{\alpha,\alpha}\big((t_2-s)^{\alpha}\Delta\big)
      \nabla\cdot(\mathbf{u}n)(s)\big\|_{L^{\rho q}(\Omega)}ds \notag \\
     &\leq C\int_{t_1}^{t_2}(t_2-s)^{\frac{\alpha}{2}-\frac{\alpha d}{2q}-1}s^{-2\beta}ds
       \|n\|_{\mathcal{X}}\|\mathbf{u}\|_{\mathcal{X}} \notag \\
     &\leq C t_2^{\frac{\alpha}{2}-\frac{\alpha d}{2q}-2\beta}R^2
       \int_{\frac{t_1}{t_2}}^{1}(1-\tau)^{\frac{\alpha}{2}-\frac{\alpha d}{2q}-1}\tau^{-2\beta}d\tau,\label{eq:IF1}
  \end{align}
  and
  \begin{align}
    \big\|I^{\mathcal{M}_1}_{2}\big\|_{L^{\rho q}(\Omega)}
    &\leq C\int_{t_1}^{t_2}(t_2-s)^{\frac{\alpha}{2}-\frac{\alpha d}{2q}-1}s^{-2\beta}ds
          \|n\|_{\mathcal{X}}\|\nabla c\|_{\mathcal{X}} \notag \\
    &\leq C t_2^{\frac{\alpha}{2}-\frac{\alpha d}{2q}-2\beta}R^2
         \int_{\frac{t_1}{t_2}}^{1}(1-\tau)^{\frac{\alpha}{2}-\frac{\alpha d}{2q}-1}\tau^{-2\beta}d\tau.\label{eq:IF2}
  \end{align}
  As $t_1\rightarrow t_{2}^{-}$, it is evident that $\|I^{\mathcal{M}_1}_{1}\|_{L^{\rho q}(\Omega)}$ and $\|I^{\mathcal{M}_1}_{2}\|_{L^{\rho q}(\Omega)}$ converge to zero.

  By \eqref{eq:Eb} in Lemma \ref{lem:MLbounds}, it follows that
  \begin{align*}
    \big\|I^{\mathcal{M}_1}_{3}\big\|_{L^{\rho q}(\Omega)}
    &\leq \int_{0}^{t_1}\big|(t_2-s)^{\alpha-1}-(t_1-s)^{\alpha-1}\big|
       \big\|E_{\alpha,\alpha}
       \big((t_2-s)^{\alpha}\Delta\big)\nabla\cdot(\mathbf{u}n)(s)\big\|_{L^{\rho q}(\Omega)}ds  \\
    &\leq C R^2\int_{0}^{t_1}\big|(t_2-s)^{\alpha-1}-(t_1-s)^{\alpha-1}\big|
      (t_2-s)^{-\frac{\alpha}{2}-\frac{\alpha d}{2q}}s^{-2\beta}ds,
  \end{align*}
  \begin{align*}
    \big\|I^{\mathcal{M}_1}_{5}\big\|_{L^{\rho q}(\Omega)}
      \leq C R^2\int_{0}^{t_1}\big|(t_2-s)^{\alpha-1}-(t_1-s)^{\alpha-1}\big|
      (t_2-s)^{-\frac{\alpha}{2}-\frac{\alpha d}{2q}}s^{-2\beta}ds.
  \end{align*}
  Moreover, utilizing \eqref{eq:HG4} in Lemma \ref{lem:Heatgroup} and \eqref{eq:EDelta}-\eqref{eq:EDelta2}, we obtain that
  \begin{align*}
    &\big\|I^{\mathcal{M}_1}_{4}\big\|_{L^{\rho q}(\Omega)}\\
    &\leq \int_{0}^{t_1}(t_1-s)^{\alpha-1}\big\|\big[E_{\alpha,\alpha}
        \big((t_2-s)^{\alpha}\Delta\big)
       -E_{\alpha,\alpha}\big((t_1-s)^{\alpha}\Delta\big)\big]\nabla\cdot(\mathbf{u}n)(s)\big\|_{L^{\rho q}(\Omega)}ds,\\
   &\leq \int_{0}^{t_1}(t_1-s)^{\alpha-1}\int_{0}^{\infty}\alpha \tau M_{\alpha}(\tau)
       \big\|e^{(t_1-s)^{\alpha}\tau\Delta}\nabla\cdot\big(e^{((t_2-s)^{\alpha}-(t_1-s)^{\alpha})\tau\Delta}-I\big)\mathbf{u}n(s)\big\|_{L^{\rho q}(\Omega)}d\tau ds                     \\
    &\leq C\int_{0}^{t_1}(t_1-s)^{\alpha-1}\int_{0}^{\infty}\alpha \tau M_{\alpha}(\tau)
        \big(1+\tau^{-\frac{1}{2}-\frac{d}{2q}}(t_1-s)^{-\frac{\alpha}{2}-\frac{\alpha d}{2q}}\big)\\
    &{\hskip4.0cm}\times\big\|\big(e^{((t_2-s)^{\alpha}-(t_1-s)^{\alpha})\tau\Delta}-I\big)\mathbf{u}n(s)\big\|_{L^{\frac{\rho q}{\rho+1}}(\Omega)}d\tau ds  \\
    &\leq CR^2\int_{0}^{t_1}
      (t_1-s)^{\alpha-1}s^{-2\beta}\int_{0}^{\infty}\alpha \tau M_{\alpha}(\tau)\big(1+\tau^{-\frac{1}{2}-\frac{d}{2q}}(t_1-s)^{-\frac{\alpha}{2}-\frac{\alpha d}{2q}}\big) \\
    &{\hskip5.0cm}\times\big\|e^{((t_2-s)^{\alpha}-(t_1-s)^{\alpha})\tau\Delta} -I\big\|_{L^{\frac{\rho q}{\rho+1}}(\Omega)\rightarrow L^{\frac{\rho q}{\rho+1}}(\Omega)}d\tau ds,
  \end{align*}
  and
  \begin{align*}
  \big\|I^{\mathcal{M}_1}_{6}\big\|_{L^{\rho q}(\Omega)}
  \leq&~ CR^2\int_{0}^{t_1}(t_1-s)^{\alpha-1}s^{-2\beta}
        \int_{0}^{\infty}\alpha \tau M_{\alpha}(\tau)\big(1+\tau^{-\frac{1}{2}-\frac{d}{2q}}(t_1-s)^{-\frac{\alpha}{2}-\frac{\alpha d}{2q}}\big) \\
     &{\hskip1.5cm}\times\big\|e^{((t_2-s)^{\alpha}-(t_1-s)^{\alpha})\tau\Delta}-I\big\|_{L^{\frac{\rho q}{\rho+1}}(\Omega)\rightarrow L^{\frac{\rho q}{\rho+1}}(\Omega)}d\tau ds.
  \end{align*}
  It is easy to demonstrate that $\|I^{\mathcal{M}_1}_{i}\|_{L^{\rho q}(\Omega)}$ ($i=3,4,5,6$) also converge to zero as $t_1\rightarrow t_{2}^{-}$ by the Lebesgue dominated convergence theorem, \eqref{eq:Mainardi} and the continuity of the heat semigroup $e^{t\Delta}$. Thus, we have $\mathcal{M}_1(n,c,\mathbf{u})(t)\in C((0,T],L^{\rho q}(\Omega))$.

  Analogous to the analysis of $\mathcal{M}_1(n,c,\mathbf{u})(t)$, we can also obtain that $\mathcal{M}_2(n,c,\mathbf{u})(t)$ $\in C((0,T],L^{\rho q}(\Omega))$ and $\nabla\mathcal{M}_2(n,c,\mathbf{u})(t)\in C((0,T],L^{\rho q}(\Omega))$ by using Lemma \ref{lem:continuous} and the similar techniques for \eqref{eq:I3} and \eqref{eq:I4} in Lemma \ref{lem:est-integral}.

  Next, we demonstrate the continuity of $\mathcal{M}_3(n,c,\mathbf{u})(t)$. It follows from \eqref{eq:map} that
  \begin{align}
    &\mathcal{M}_3(n,c,\mathbf{u})(t_2)-\mathcal{M}_3(n,c,\mathbf{u})(t_1)  \notag \\
    &=\int_{t_1}^{t_2}e^{(t_2-s)\Delta}\mathcal{P}
       \big(\nabla\cdot(\mathbf{u}\otimes \mathbf{u})-n\nabla\Phi\big)ds   \notag \\
    &~~~~+\int_{0}^{t_1}\big[e^{(t_2-s)\Delta}-e^{(t_1-s)\Delta}\big]
       \mathcal{P}\big(\nabla\cdot(\mathbf{u}\otimes \mathbf{u})-n\nabla\Phi\big)ds.
  \end{align}
  Then it suffices to estimate the four terms in \eqref{eq:c1M3}-\eqref{eq:c4M3} due to the boundedness of the projection $\mathcal{P}$ in $L^{p}(\Omega)$ for $p\in(1,\infty)$. Using the estimates \eqref{eq:semigroupLap}, \eqref{eq:DivsemigroupLap} in Lemma \ref{lem:Dirichletgroup} and $\|(\mathbf{u}\otimes \mathbf{u})\|_{L^{\frac{\rho q}{2}}(\Omega)}=\|\mathbf{u}\|_{L^{\rho q}(\Omega)}^2$, it follows that
  \begin{align}
    \int_{t_1}^{t_2}\|e^{(t_2-s)\Delta}\nabla\cdot(\mathbf{u}\otimes \mathbf{u})\|_{L^{\rho q}(\Omega)}ds
    &\leq C\int_{t_1}^{t_2}(t_2-s)^{-\frac{1}{2}-\frac{d}{2\rho q}}
      \|\mathbf{u}\otimes \mathbf{u}\|_{L^{\frac{\rho q}{2}}(\Omega)}ds  \notag \\
    &\leq Ct_2^{\frac{1}{2}-\frac{d}{2\rho q}-2\beta}\|\mathbf{u}\|^{2}_{\mathcal{X}}
      \int_{\frac{t_1}{t_2}}^{1}(1-\tau)^{-\frac{1}{2}-\frac{d}{2\rho q}}\tau^{-2\beta}d\tau,\label{eq:c1M3}
  \end{align}
  \begin{align}
    \int_{t_1}^{t_2}\|e^{(t_2-s)\Delta}(n\nabla\Phi)\|_{L^{\rho q}(\Omega)}ds
    &\leq C\int_{\frac{t_1}{t_2}}^{1}\|n\nabla\Phi\|_{L^{\rho q}}d\tau \notag \\
    &\leq Ct_2^{1-\beta}\|n\|_{\mathcal{X}}
     \int_{\frac{t_1}{t_2}}^{1}\tau^{-\beta}d\tau\|\nabla\Phi\|_{L^{\infty}(\Omega)},
  \end{align}
  both of which tend to zero as $t_1\rightarrow t_{2}^{-}$.
  Employing \eqref{eq:DivsemigroupLap} in Lemma \ref{lem:Dirichletgroup}, we obtain
  \begin{align*}
    &\int_{0}^{t_1}\big\|\big(e^{(t_2-s)\Delta}-e^{(t_1-s)\Delta}\big)
        \nabla\cdot(\mathbf{u}\otimes \mathbf{u})\big\|_{L^{\rho q}(\Omega)}ds \\
    &\leq C\int_{0}^{t_1}(t_1-s)^{-\frac{1}{2}-\frac{d}{2\rho q}}
        \left\|\left(e^{(t_2-s)\Delta-(t_1-s)\Delta}-I\right)
        (\mathbf{u}\otimes \mathbf{u})\right\|_{L^{\frac{\rho q}{2}}(\Omega)}ds   \\
   &\leq Ct_1^{\frac{1}{2}-\frac{d}{2\rho q}-2\beta}\|\mathbf{u}\|^{2}_{\mathcal{X}}
        \int_{0}^{1}(1-\tau)^{-\frac{1}{2}-\frac{d}{2\rho q}}\tau^{-2\beta}
        \|e^{(t_2-t_1)\Delta}-I\|_{L^{\rho q}(\Omega)\rightarrow L^{\rho q}(\Omega)}d\tau,
  \end{align*}
  and
  \begin{align}\label{eq:c4M3}
    &\int_{0}^{t_1}\big\|\big(e^{(t_2-s)\Delta}-e^{(t_1-s)\Delta}\big)
       (n\nabla\Phi)\big\|_{L^{\rho q}(\Omega)}ds \cr
    &\leq Ct_1^{1-\beta}\|n\|_{\mathcal{X}} \int_{0}^{1}\tau^{-\beta}
       \|e^{(t_2-t_1)\Delta}-I\|_{L^{\rho q}(\Omega)\rightarrow L^{\rho q}(\Omega)}d\tau.
  \end{align}
  Applying the Lebesgue dominated convergence theorem, the above two terms converge to zero as $t_1\rightarrow t_{2}^{-}$. Then we deduce that $\mathcal{M}_3(n,c,\mathbf{u})(t)\in C((0,T],L^{\rho q}(\Omega))$.
\end{proof}

  Combining the results in Lemmas \ref{lem:MiX} and \ref{lem:cMt}, it concludes that $\mathcal{M}:\mathbb{X}\rightarrow\mathbb{X}$ is a well-defined map for $t>0$.
\begin{lemma}\label{lem:continuity}
  The map $\mathcal{M}$ is well-defined in $\mathbb{X}$ for $t>0$ and maps $\mathbb{X}$ to itself.
\end{lemma}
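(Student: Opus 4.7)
The plan is to assemble this lemma directly from the two preparatory results that precede it, namely Lemma \ref{lem:MiX} (norm bound) and Lemma \ref{lem:cMt} (continuity), so no new estimate is required. First I would recall that membership in $\mathbb{X}$ requires two things of the triple $(\mathcal{M}_1,\mathcal{M}_2,\mathcal{M}_3)(n,c,\mathbf{u})$: each component must lie in $\mathcal{X}$ (with the gradient of the second component also in $\mathcal{X}^d$), and the combined $\mathbb{S}_T$-norm must not exceed $3R$. The verification of the second requirement is precisely the content of Lemma \ref{lem:MiX}, where the four bounds \eqref{eq:M1-b}, \eqref{eq:M2-b}, \eqref{eq:M2-d} and \eqref{eq:M3-b} each give $\tfrac{3}{8}R$; summing any three of them stays below $3R$, and I would just observe that the summation matches the definition of $\|\cdot\|_{\mathbb{S}_T}$.

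Second, I would quote Lemma \ref{lem:cMt} to ensure that $\mathcal{M}_1(n,c,\mathbf{u})$, $\mathcal{M}_2(n,c,\mathbf{u})$, $\nabla\mathcal{M}_2(n,c,\mathbf{u})$ and $\mathcal{M}_3(n,c,\mathbf{u})$ all belong to $C((0,T];L^{\rho q}(\Omega))$, which—together with finiteness of the weighted supremum norms obtained in Step~1—places each of them in $\mathcal{X}$ (or $\mathcal{X}^d$, as appropriate). This is the membership in the solution space $\mathbb{S}_T$ that the definition of $\mathbb{X}$ demands before any norm condition is imposed.

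Finally, combining these two inputs yields $\mathcal{M}(n,c,\mathbf{u})\in\mathbb{S}_T$ with $\|\mathcal{M}(n,c,\mathbf{u})\|_{\mathbb{S}_T}\le 3R$, hence $\mathcal{M}(n,c,\mathbf{u})\in\mathbb{X}$, proving that $\mathcal{M}$ is well-defined on $\mathbb{X}$ and maps $\mathbb{X}$ into itself. There is no genuine obstacle here: the hard analytic work—the priori estimates of Lemma \ref{lem:est-integral}, the careful choice of $T$ via \eqref{eq:intial-c}--\eqref{eq:Tconditions4}, and the strong continuity arguments based on Lemma \ref{lem:continuous} together with the Lebesgue dominated convergence theorem—has been absorbed into Lemmas \ref{lem:MiX} and \ref{lem:cMt}. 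The only small care needed is to make sure that the gradient component $\nabla\mathcal{M}_2(n,c,\mathbf{u})$, which appears in the norm $\|\cdot\|_{\mathbb{S}_T}$ but is not a separate map in the triple $\mathcal{M}$, is handled by citing the $\nabla c_0$ estimate \eqref{eq:M2-c}--\eqref{eq:M2-d} from Lemma \ref{lem:MiX} and the continuity of $\nabla\mathcal{M}_2$ established in the proof of Lemma \ref{lem:cMt}.
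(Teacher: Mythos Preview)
Your proposal is correct and matches the paper's approach exactly: the paper simply states, in one sentence, that combining Lemmas \ref{lem:MiX} and \ref{lem:cMt} yields $\mathcal{M}:\mathbb{X}\rightarrow\mathbb{X}$ is well-defined for $t>0$. Your write-up is more detailed than the paper's own treatment, but the logical structure is identical.
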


To obtain the local existence and uniqueness of the solution to the TF-KSNS system \eqref{eq:problem}-\eqref{eq:InitialC}, we further need to show the map $\mathcal{M}:\mathbb{X}\rightarrow\mathbb{X}$ is contractive such that it meets the conditions of the Banach fixed point theorem \cite[Theorem 5.7]{Brezis10}.
\begin{lemma}[Contraction]\label{lem:Contraction}
  The map $\mathcal{M}:\mathbb{X}\rightarrow\mathbb{X}$ is a contraction.
\end{lemma}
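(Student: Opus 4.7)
The plan is to mimic the boundedness analysis of Lemma 4.3, but applied to the differences $\mathcal{M}_i(n_1,c_1,\mathbf{u}_1)-\mathcal{M}_i(n_2,c_2,\mathbf{u}_2)$, $i=1,2,3$, for arbitrary $(n_j,c_j,\mathbf{u}_j)\in\mathbb{X}$, $j=1,2$. The decisive observation is that the linear inhomogeneous terms carrying the initial data $n_0$, $c_0$, $\mathbf{u}_0$ cancel in each difference, so what remains is a sum of Duhamel-type integrals whose integrands are differences of bilinear (or quadratic) expressions in the unknowns.

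The first step is to rewrite each bilinear difference as a telescoping sum that isolates a single difference factor. Concretely, I would use
\begin{align*}
\mathbf{u}_1\cdot\nabla n_1-\mathbf{u}_2\cdot\nabla n_2 &=\nabla\cdot\bigl((\mathbf{u}_1-\mathbf{u}_2)n_1\bigr)+\nabla\cdot\bigl(\mathbf{u}_2(n_1-n_2)\bigr),\\
n_1\nabla c_1-n_2\nabla c_2 &=(n_1-n_2)\nabla c_1+n_2\nabla(c_1-c_2),\\
\mathbf{u}_1\cdot\nabla c_1-\mathbf{u}_2\cdot\nabla c_2 &=(\mathbf{u}_1-\mathbf{u}_2)\cdot\nabla c_1+\mathbf{u}_2\cdot\nabla(c_1-c_2),\\
\mathbf{u}_1\otimes\mathbf{u}_1-\mathbf{u}_2\otimes\mathbf{u}_2 &=(\mathbf{u}_1-\mathbf{u}_2)\otimes\mathbf{u}_1+\mathbf{u}_2\otimes(\mathbf{u}_1-\mathbf{u}_2),
\end{align*}
using the incompressibility $\nabla\cdot\mathbf{u}_j=0$ for the first identity, together with the trivial linearity of the drift term $n\nabla\Phi$.

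Next, I would feed each of the resulting pieces into exactly the estimates $I_1$ through $I_8$ of Lemma 4.1, and also bound the $\nabla\mathcal{M}_2$ difference via $I_5$, $I_6$ as in Lemma 4.3. In every product, one factor is a difference (which contributes a term like $\|n_1-n_2\|_{\mathcal{X}}$, $\|\nabla(c_1-c_2)\|_{\mathcal{X}}$, or $\|\mathbf{u}_1-\mathbf{u}_2\|_{\mathcal{X}}$) and the other factor lies in $\mathbb{X}$, so it is bounded in the corresponding $\mathcal{X}$-norm by $3R$. After multiplying by $t^\beta$ and taking the supremum in $t\in(0,T)$, I obtain a bound of the form
\begin{equation*}
\|\mathcal{M}(n_1,c_1,\mathbf{u}_1)-\mathcal{M}(n_2,c_2,\mathbf{u}_2)\|_{\mathbb{S}_T}\le \theta(T,R)\,\|(n_1-n_2,c_1-c_2,\mathbf{u}_1-\mathbf{u}_2)\|_{\mathbb{S}_T},
\end{equation*}
with $\theta(T,R)$ a finite sum of terms of the shape $CR\,T^{\frac{\alpha}{2}-\frac{\alpha d}{2q}-\beta}B(1-2\beta,\tfrac{\alpha}{2}-\tfrac{\alpha d}{2q})$, $CT^{\alpha}B(1-\beta,\alpha)$, $CR\,T^{\frac{1}{2}-\frac{d}{2\rho q}-\beta}B(1-2\beta,\tfrac12-\tfrac{d}{2\rho q})$, and $CT$, coming respectively from the convective, reactive, Navier--Stokes, and buoyancy contributions. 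Exactly these expressions were shown in Lemma 4.3 (via the smallness conditions (4.6)--(4.8)) to be smaller than constants like $1/8$, so $\theta(T,R)<1$ follows by the same smallness of $T$ up to harmless universal constants (replacing $1/8$ by, say, $1/2$ in (4.6)--(4.8), which is allowed because $T$ can be chosen arbitrarily small from the outset).

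The one step that requires a little care, rather than being purely mechanical, is the cross term $\nabla\cdot(\mathbf{u}_2(n_1-n_2))$ in $\mathcal{M}_1$ and the analogous rearrangement in $\mathcal{M}_3$: the estimate from Lemma 4.1 is stated in the form $\nabla\cdot(\mathbf{u}n)$ with both factors in $L^{\rho q}$, so I must verify that the decomposition above puts each piece in exactly that form, in particular that the factor which survives as the ``large'' bound lives in $\mathcal{X}$ (hence in $L^{\rho q}$ with weight $t^\beta$) and not as a gradient. Once that bookkeeping is done, the contraction estimate follows directly from Lemma 4.1 and the choice of $T$, with no new analytic input needed.
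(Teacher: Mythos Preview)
Your proposal is correct and follows essentially the same route as the paper: telescope each bilinear difference, apply the $I_1$--$I_8$ estimates of Lemma~4.1 termwise (and $I_5,I_6$ for $\nabla\mathcal{M}_2$), and invoke the smallness conditions (4.6)--(4.8). One small remark: you need not weaken the constants in (4.6)--(4.8); the paper carries out exactly your computation with the stated $1/8$ bounds and obtains the explicit contraction constant $5/8$, so no adjustment of $T$ beyond what is already assumed is required.
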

\begin{proof}
  Let $(n_1, c_1, \mathbf{u}_1),~(n_2, c_2, \mathbf{u}_2)\in\mathbb{X}$, and their distance in $\mathbb{X}$ be given by
  \begin{displaymath}
  \mathcal{D}_{T}[(n_1, c_1, \mathbf{u}_1),(n_2, c_2, \mathbf{u}_2)]
    :=\|n_1-n_2\|_{\mathcal{X}}
    +\|c_1-c_2\|_{\mathcal{X}}
    +\|\nabla(c_1-c_2)\|_{\mathcal{X}}
    +\|\mathbf{u}_1-\mathbf{u}_2\|_{\mathcal{X}}.
  \end{displaymath}

  It yields from \eqref{eq:map} that
  \begin{align*}
    &\big\|\mathcal{M}_1(n_1, c_1, \mathbf{u}_1)-\mathcal{M}_1(n_2, c_2, \mathbf{u}_2)\big\|_{\mathcal{X}} \\
    &=\sup_{t\in(0,T)}t^{\beta}\int_{0}^{t}(t-s)^{\alpha-1}\|E_{\alpha,\alpha}((t-s)^{\alpha}\Delta)\big((\mathbf{u}_2-\mathbf{u}_1)\cdot\nabla n_2\big)(s)\|_{L^{\rho q}(\Omega)}ds  \\
    &~~~~+\sup_{t\in(0,T)}t^{\beta}\int_{0}^{t}(t-s)^{\alpha-1}\|E_{\alpha,\alpha}((t-s)^{\alpha}\Delta)\big(\mathbf{u}_1\cdot(\nabla n_2-\nabla n_1)\big)(s)\|_{L^{\rho q}(\Omega)}ds \\
    &~~~~+\sup_{t\in(0,T)}t^{\beta}\int_{0}^{t}(t-s)^{\alpha-1}\|E_{\alpha,\alpha}((t-s)^{\alpha}\Delta)\nabla\cdot\big((n_2- n_1)\nabla c_2\big)(s)\|_{L^{\rho q}(\Omega)}ds   \\
    &~~~~+\sup_{t\in(0,T)}t^{\beta}\int_{0}^{t}(t-s)^{\alpha-1}\|E_{\alpha,\alpha}((t-s)^{\alpha}\Delta)\nabla\cdot \big(n_1(\nabla c_2-\nabla c_1)\big)(s)\|_{L^{\rho q}(\Omega)}ds \\
    &=J^{\mathcal{M}_1}_{1}+J^{\mathcal{M}_1}_{2}+J^{\mathcal{M}_1}_{3}+J^{\mathcal{M}_1}_{4}.
  \end{align*}

  To begin with, by \eqref{eq:I1} in Lemma \ref{lem:est-integral} and (\ref{eq:Tconditions01}), $J^{\mathcal{M}_1}_{1}$ and $J^{\mathcal{M}_1}_{2}$ are estimated as
  \begin{align*}
    J^{\mathcal{M}_1}_{1}
    &\leq C\sup_{t\in(0,T)}t^{\beta} \int_{0}^{t}(t-s)^{\frac{\alpha}{2}-\frac{\alpha d}{2q}-1}s^{-2\beta}ds\|n_2\|_{\mathcal{X}}\|\mathbf{u}_1-\mathbf{u}_2\|_{\mathcal{X}}  \\
    &\leq CT^{\frac{\alpha}{2}-\frac{\alpha d}{2q}-\beta}B(1-2\beta,\frac{\alpha}{2}-\frac{\alpha d}{2q})R
    \|\mathbf{u}_1-\mathbf{u}_2\|_{\mathcal{X}}
    \leq \frac{1}{8}\|\mathbf{u}_1-\mathbf{u}_2\|_{\mathcal{X}},
  \end{align*}
  and
  \begin{align*}
    J^{\mathcal{M}_1}_{2}
    \leq CT^{\frac{\alpha}{2}-\frac{\alpha d}{2q}-\beta}
         B(1-2\beta,\frac{\alpha}{2}-\frac{\alpha d}{2q})R\|n_1-n_2\|_{\mathcal{X}}
    \leq \frac{1}{8}\|n_1-n_2\|_{\mathcal{X}}.
  \end{align*}
  As to $J^{\mathcal{M}_1}_{3}$ and $J^{\mathcal{M}_1}_{4}$, it derives from \eqref{eq:I2} in Lemma \ref{lem:est-integral} and \eqref{eq:Tconditions01} that
  \begin{align*}
    J^{\mathcal{M}_1}_{3}
    &\leq C\sup_{t\in(0,T)}t^{\beta} \int_{0}^{t}(t-s)^{\frac{\alpha}{2}-\frac{\alpha d}{2q}-1}
          s^{-2\beta}ds\|\nabla c_2\|_{\mathcal{X}}\|n_1-n_2\|_{\mathcal{X}}\\
    &\leq CT^{\frac{\alpha}{2}-\frac{\alpha d}{2q}-\beta}
          B(1-2\beta,\frac{\alpha}{2}-\frac{\alpha d}{2q})R\|n_1-n_2\|_{\mathcal{X}}
     \leq \frac{1}{8}\|n_1-n_2\|_{\mathcal{X}},\\
    J^{\mathcal{M}_1}_{4}
    &\leq CT^{\frac{\alpha}{2}-\frac{\alpha d}{2q}-\beta}
          B(1-2\beta,\frac{\alpha}{2}-\frac{\alpha d}{2q})R\|\nabla c_1-\nabla c_2\|_{\mathcal{X}}
     \leq \frac{1}{8}\|\nabla c_1-\nabla c_2\|_{\mathcal{X}}.
  \end{align*}
  By combining the above four estimates, we obtain
  \begin{equation}\label{eq:M1-nu}
    \big\|\mathcal{M}_1(n_1, c_1, \mathbf{u}_1)-\mathcal{M}_1(n_2, c_2, \mathbf{u}_2)\big\|_{\mathcal{X}}
    \leq\frac{1}{4}\|n_1-n_2\|_{\mathcal{X}}+\frac{1}{8}\|\nabla c_1-\nabla c_2\|_{\mathcal{X}}+\frac{1}{8}\|\mathbf{u}_1-\mathbf{u}_2\|_{\mathcal{X}}.
  \end{equation}

  Now, it turns to the estimates for $\mathcal{M}_2$ and $\nabla\mathcal{M}_2$. In a similar way, we can conclude from \eqref{eq:I3} and \eqref{eq:I4} in Lemma \ref{lem:est-integral}, the conditions \eqref{eq:Tconditions0} and \eqref{eq:Tconditions01} that
  \begin{align}\label{eq:M2-nu}
  &\big\|\mathcal{M}_2(n_1, c_1, \mathbf{u}_1)-\mathcal{M}_2(n_2, c_2, \mathbf{u}_2)\big\|_{\mathcal{X}}
  \notag\\
  &\leq \sup_{t\in(0,T)}t^{\beta}\int_{0}^{t}(t-s)^{\alpha-1}
      \|E_{\alpha,\alpha}((t-s)^{\alpha}(\Delta-\gamma))\big((\mathbf{u}_1-\mathbf{u}_2)\cdot \nabla c_1\big)(s)\|_{L^{\rho q}(\Omega)}ds   \notag \\
 &~~~~+\sup_{t\in(0,T)}t^{\beta}\int_{0}^{t}(t-s)^{\alpha-1}
      \|E_{\alpha,\alpha}((t-s)^{\alpha}(\Delta-\gamma))\big(\mathbf{u}_2\cdot\nabla(c_2-c_1)\big)(s)\|_{L^{\rho q}(\Omega)}ds  \notag \\
 &~~~~+\sup_{t\in(0,T)}t^{\beta}\int_{0}^{t}(t-s)^{\alpha-1}
      \|E_{\alpha,\alpha}((t-s)^{\alpha}(\Delta-\gamma))(n_2-n_1)(s)\|_{L^{\rho q}(\Omega)}ds \notag \\
 &\leq CT^{\alpha-\frac{\alpha d}{2q}-\beta}B(1-2\beta,\alpha-\frac{\alpha d }{2 q})
      \|\mathbf{u}_1-\mathbf{u}_2\|_{\mathcal{X}}\|\nabla c_1\|_{\mathcal{X}} \notag \\
 &~~~~+CT^{\alpha-\frac{\alpha d}{2q}-\beta}B(1-2\beta,\alpha-\frac{\alpha d }{2 q})
      \|\mathbf{u}_2\|_{\mathcal{X}}\|\nabla(c_1-c_2)\|_{\mathcal{X}} \notag \\
 &~~~~+CT^{\alpha}B(1-\beta,\alpha)\|n_1-n_2\|_{\mathcal{X}} \notag \\
 &\leq \frac{1}{8}\big(\|n_1-n_2\|_{\mathcal{X}}+\|\nabla c_1-\nabla c_2\|_{\mathcal{X}}
      +\|\mathbf{u}_1-\mathbf{u}_2\|_{\mathcal{X}}\big).
  \end{align}
  Similarly, it derives from \eqref{eq:I5} and \eqref{eq:I6}, the conditions \eqref{eq:Tconditions0} and (\ref{eq:Tconditions01}) that
  \begin{align}\label{eq:M2-gnu}
    &\big\|\nabla\mathcal{M}_2(n_1, c_1, \mathbf{u}_1)-\nabla\mathcal{M}_2(n_2, c_2, \mathbf{u}_2)\big\|_{\mathcal{X}} \notag \\
    &\leq \sup_{t\in(0,T)}t^{\beta}\int_{0}^{t}(t-s)^{\alpha-1}
      \|\nabla E_{\alpha,\alpha}((t-s)^{\alpha}(\Delta-\gamma))\big((\mathbf{u}_1-\mathbf{u}_2)\cdot \nabla c_1\big)(s)\|_{L^{\rho q}(\Omega)}ds \notag \\
    &~~~~+\sup_{t\in(0,T)}t^{\beta}\int_{0}^{t}(t-s)^{\alpha-1}
      \|\nabla E_{\alpha,\alpha}((t-s)^{\alpha}(\Delta-\gamma))
      \big(\mathbf{u}_2\cdot\nabla(c_2-c_1)\big)(s)\|_{L^{\rho q}(\Omega)}ds \notag \\
    &~~~~+\sup_{t\in(0,T)}t^{\beta}\int_{0}^{t}(t-s)^{\alpha-1}
      \|\nabla E_{\alpha,\alpha}((t-s)^{\alpha}(\Delta-\gamma))(n_2-n_1)(s)\|_{L^{\rho q}(\Omega)}ds \notag \\
    &\leq  CT^{\frac{\alpha}{2}-\frac{\alpha d}{2q}
         -\beta}B(1-2\beta,\frac{\alpha}{2}-\frac{\alpha d}{2q})\|\nabla c_1\|_{\mathcal{X}}\|\mathbf{u}_1-\mathbf{u}_2\|_{\mathcal{X}} \notag \\
    &~~~~+CT^{\frac{\alpha}{2}-\frac{\alpha d}{2q}-\beta}
         B(1-2\beta,\frac{\alpha}{2}-\frac{\alpha d}{2q})
         \|\mathbf{u}_2\|_{\mathcal{X}}\|\nabla(c_1-c_2)\|_{\mathcal{X}} \notag \\
    &~~~~+CT^{\frac{\alpha}{2}}B(1-\beta,\frac{\alpha}{2})\|n_1-n_2\|_{\mathcal{X}} \notag \\
    &\leq \frac{1}{8}\big(\|n_1-n_2\|_{\mathcal{X}}+\|\nabla c_1-\nabla c_2\|_{\mathcal{X}}
        +\|\mathbf{u}_1-\mathbf{u}_2\|_{\mathcal{X}}\big).
  \end{align}

  Regarding $\mathcal{M}_3(n_1, c_1, \mathbf{u}_1)-\mathcal{M}_3(n_2, c_2, \mathbf{u}_2)$, there exists
  \begin{align}
    &\big\|\mathcal{M}_3(n_1, c_1, \mathbf{u}_1)-\mathcal{M}_3(n_2, c_2, \mathbf{u}_2)\big\|_{\mathcal{X}} \notag \\
    &\leq \sup_{t\in(0,T)}t^{\beta}\int_{0}^{t}\big\|e^{(t-s)\Delta}\mathcal{P}\nabla\cdot
      \big((\mathbf{u}_1-\mathbf{u}_2)\otimes\mathbf{u}_1\big)(s)\big\|_{L^{\rho q}(\Omega)}ds  \notag \\
    &~~~~+\sup_{t\in(0,T)}t^{\beta}\int_{0}^{t}\big\|e^{(t-s)\Delta}\mathcal{P}\nabla\cdot
      \big(\mathbf{u}_2\otimes(\mathbf{u}_1-\mathbf{u}_2)\big)(s)\big\|_{L^{\rho q}(\Omega)}ds  \notag \\
    &~~~~+\sup_{t\in(0,T)}t^{\beta}\int_{0}^{t}
      \big\|e^{(t-s)\Delta}\mathcal{P}\big((n_2-n_1)(s)\nabla\Phi\big)\big\|_{L^{\rho q}(\Omega)}ds \notag \\
    &\leq CT^{\frac{1}{2}-\frac{d}{2\rho q}-\beta}R
     B(1-2\beta, \frac{1}{2}-\frac{d}{2\rho q})\|\mathbf{u}_1-\mathbf{u}_2\|_{\mathcal{X}} \notag \\
    &~~~~+CT^{\frac{1}{2}-\frac{d}{2\rho q}-\beta}R
     B(1-2\beta, \frac{1}{2}-\frac{d}{2\rho q})\|\mathbf{u}_1-\mathbf{u}_2\|_{\mathcal{X}}
       +CT\|n_1-n_2\|_{\mathcal{X}}  \notag \\
    &\leq\frac{1}{8}\|n_1-n_2\|_{\mathcal{X}}+\frac{1}{4}\|\mathbf{u}_1-\mathbf{u}_2\|_{\mathcal{X}}.\label{eq:M3-nu}
  \end{align}

  Combining all of the aforementioned estimates \eqref{eq:M1-nu}, \eqref{eq:M2-nu}, \eqref{eq:M2-gnu}, and (\ref{eq:M3-nu}), it is clear that
  \begin{align*}
    \mathcal{D}_{T}[\mathcal{M}(n_1, v_1, \mathbf{u}_1),\mathcal{M}(n_2, v_2, \mathbf{u}_2)]
    &\leq\frac{5}{8}\|n_1-n_2\|_{\mathcal{X}}+\frac{3}{8}\|\nabla c_1-\nabla c_2\|_{\mathcal{X}}
    +\frac{5}{8}\|\mathbf{u}_1-\mathbf{u}_2\|_{\mathcal{X}}  \\
    &\leq\frac{5}{8}\mathcal{D}_{T}[(n_1, c_1, \mathbf{u}_1),(n_2, c_2, \mathbf{u}_2)],
  \end{align*}
   which asserts that $\mathcal{M}:\mathbb{X}\rightarrow \mathbb{X}$ is a strict contraction.
\end{proof}

With the results of Lemmas \ref{lem:continuity} and \ref{lem:Contraction} established at hand, we can determine the local existence and uniqueness of the solution to the TF-KSNS system \eqref{eq:problem}-\eqref{eq:InitialC}.
\begin{proof}[\textbf{Proof of Theorem \ref{thm:RegularThm}}]
  According to Lemmas \ref{lem:continuity} and \ref{lem:Contraction}, the map $\mathcal{M}:\mathbb{X}\rightarrow \mathbb{X}$ in \eqref{eq:map} is well-defined in the complete metric space $\mathbb{X}$ and is strictly contractive for $t\in(0,T]$. As a result, it follows from the Banach fixed point theorem \cite[Theorem 5.7]{Brezis10} that there exists a unique fixed point $(n, c, \mathbf{u}) \in \mathbb{X}$ satisfying \eqref{eq:DuhamelP}, which is the unique local mild solution to the system \eqref{eq:problem}-\eqref{eq:InitialC} for $t\in(0,T]$.

 We next prove that $(n,c,\mathbf{u})$ is continuous at $t=0$. To begin with, we have from \eqref{eq:Eg} in Lemma \ref{lem:MLbounds} and H\"{o}lder's inequality that
  \begin{align*}
    \big\|n(t)-n_0\big\|_{L^{q}(\Omega)}
    &\leq \big\|E_{\alpha}(t^{\alpha}\Delta)n_0-n_0\big\|_{L^{q}(\Omega)} \\
    &~~~~+C\int_{0}^{t}(t-s)^{\frac{\alpha}{2}-1}\|n(s)\|_{L^{\rho q}(\Omega)}
     \|\mathbf{u}(s)\|_{L^{\frac{\rho q}{\rho-1}}(\Omega)}ds              \\
    &~~~~+C\int_{0}^{t}(t-s)^{\frac{\alpha}{2}-1}\|n(s)\|_{L^{\rho q}(\Omega)}
     \big\|\nabla c(s)\big\|_{L^{\frac{\rho q}{\rho-1}}(\Omega)}ds         \\
    &\leq \big\|E_{\alpha}(t^{\alpha}\Delta)n_0-n_0\big\|_{L^{q}(\Omega)}
    +Ct^{\frac{\alpha}{2}-2\beta}B(1-2\beta,\frac{\alpha}{2})\|n\|_{\mathcal{X}}
     \|\mathbf{u}\|_{\mathcal{X}}   \\
    &~~~~+Ct^{\frac{\alpha}{2}-2\beta}B(1-2\beta,\frac{\alpha}{2})\|n\|_{\mathcal{X}}
     \big\|\nabla c\big\|_{\mathcal{X}}.
  \end{align*}
  Similarly, by \eqref{eq:Ee} in Lemma \ref{lem:MLbounds} and H\"{o}lder's inequality, it holds that
  \begin{align*}
    &\big\|c(t)-c_0\big\|_{L^{q}(\Omega)}\\
    &\leq \big\|E_{\alpha}(t^{\alpha}(\Delta-\gamma))c_0-c_0\big\|_{L^{q}(\Omega)}
           +C\int_{0}^{t}(t-s)^{\alpha-1}\|\mathbf{u}(s)\|_{L^{\rho q}(\Omega)}
           \big\|\nabla c(s)\big\|_{L^{\frac{\rho q}{\rho-1}}(\Omega)}ds    \\
    &~~~~+C\int_{0}^{t}(t-s)^{\alpha-1}\|n(s)\|_{L^{q}(\Omega)}ds   \\
    &\leq \big\|E_{\alpha}(t^{\alpha}(\Delta-\gamma))c_0-c_0\big\|_{L^{q}(\Omega)}
             +Ct^{\alpha-2\beta}B(1-2\beta,\alpha)\|\mathbf{u}\|_{\mathcal{X}}
             \big\|\nabla c\big\|_{\mathcal{X}}  \\
    &~~~~+Ct^{\alpha-\beta}B(1-\beta,\alpha)\|n\|_{\mathcal{X}},
  \end{align*}
  and
  \begin{align*}
    &\big\|\nabla c(t)-\nabla c_0\big\|_{L^{q}(\Omega)}\\
    &\leq \big\|\nabla E_{\alpha}(t^{\alpha}(\Delta-\gamma))c_0-\nabla c_0\big\|_{L^{q}(\Omega)}
           +Ct^{\frac{\alpha}{2}-2\beta}B(1-2\beta,\frac{\alpha}{2})\|\mathbf{u}\|_{\mathcal{X}}
           \big\|\nabla c\big\|_{\mathcal{X}}  \\
    &~~~~+Ct^{\frac{\alpha}{2}-\beta}B(1-\beta,\frac{\alpha}{2})\|n\|_{\mathcal{X}}.
  \end{align*}
  In addition, it yields from Lemma \ref{lem:Dirichletgroup} and H\"{o}lder's inequality that
  \begin{align*}
   \big\|\mathbf{u}(t)-\mathbf{u}_0\big\|_{L^{q}(\Omega)}
   &\leq  \big\|e^{t\Delta}\mathbf{u}_0-\mathbf{u}_0\big\|_{L^{q}(\Omega)}
       +C\int_{0}^{t}(t-s)^{-\frac{1}{2}-\frac{d}{2q}}\|\mathbf{u}\otimes \mathbf{u}\|_{L^{\frac{q}{2}}(\Omega)}ds   \\
   &~~~~+C\int_{0}^{t}\big\|n\nabla\Phi\big\|_{L^{q}(\Omega)}ds \\
    &\leq  \big\|e^{t\Delta}\mathbf{u}_0-\mathbf{u}_0\big\|_{L^{q}(\Omega)}
         +Ct^{\frac{1}{2}-\frac{d}{2q}-2\beta}
         B(1-2\beta,\frac{1}{2}-\frac{d}{2q})\|\mathbf{u}\|^2_{\mathcal{X}}\\
   &~~~~+Ct^{1-\beta}B(1-\beta,1)\|n\|_{\mathcal{X}}.
  \end{align*}
  Applying Lemma \ref{lem:continuous}, the continuity of $e^{t\Delta}$ and Assumption \ref{ass:pi}, 
  the above estimates tell us that $\lim_{t\rightarrow0^{+}}\|n(t)-n_0\|_{L^{q}(\Omega)}=0$, $\lim_{t\rightarrow0^{+}}\|c(t)-c_0\|_{L^{q}(\Omega)}=0$, $\lim_{t\rightarrow0^{+}}\|\nabla c(t)-\nabla c_0\|_{L^{q}(\Omega)}=0$ and $\lim_{t\rightarrow0^{+}}\|\mathbf{u}(t)-\mathbf{u}_0\|_{L^{q}(\Omega)}=0$. Then we deduce that $n, c\in C([0,T],L^{q}(\Omega))$, and $\nabla c,\mathbf{u}\in C([0,T],L^{q}(\Omega;\mathbb{R}^d))$ due to $L^{\rho q}(\Omega)\hookrightarrow L^{q}(\Omega)$. In addition, the asymptotic property in \eqref{eq:asymptotic-behav} is also obtained from the above estimates.

  It remains to show the continuous dependence on the initial data, which is crucial for inferring the continuation of the mild solution to the TF-KSNS system in the next section.
  Let $(n_1,c_1,\mathbf{u}_1)$ and $(n_2,c_2,\mathbf{u}_2)$ be the mild solutions in $\mathbb{X}$ to the TF-KSNS system \eqref{eq:problem}-\eqref{eq:InitialC} corresponding to the initial data $(n_{1,0},c_{1,0},\mathbf{u}_{1,0})$ and $(n_{2,0},c_{2,0},\mathbf{u}_{2,0})$, respectively. Since $\|\nabla E_{\alpha}(t^{\alpha}(\Delta-\gamma))(c_{1,0}-c_{2,0})\|_{L^{\rho q}(\Omega)}\leq C\|\nabla (c_{1,0}-c_{2,0})\|_{L^{\rho q}(\Omega)}$ holds by applying \eqref{eq:HG1} in Lemma \ref{lem:Heatgroup} and \eqref{eq:Mainardi}, then for $T>0$ being small enough, it yields from \eqref{eq:DuhamelP} and \eqref{eq:map} that
  \begin{align*}
    &\big\|n_1-n_2\big\|_{\mathcal{X}}+\big\|c_1-c_2\big\|_{\mathcal{X}}+\big\|\nabla(c_1-c_2)\big\|_{\mathcal{X}}
       +\big\|\mathbf{u}_1-\mathbf{u}_2\big\|_{\mathcal{X}} \\
    &\leq  C\big\|n_{1,0}-n_{2,0}\big\|_{\mathcal{X}}+C\big\|c_{1,0}-c_{2,0}\big\|_{\mathcal{X}}
       +C\big\|\nabla(c_{1,0}-c_{2,0})\big\|_{\mathcal{X}}+C\big\|\mathbf{u}_{1,0}-\mathbf{u}_{2,0}\|_{\mathcal{X}}\\
    &~~+\big\|\mathcal{M}_1(n_1, c_1, \mathbf{u}_1)-\mathcal{M}_1(n_2, c_2,\mathbf{u}_2)\big\|_{\mathcal{X}}
       +\big\|\mathcal{M}_2(n_1, c_1,\mathbf{u}_1)-\mathcal{M}_2(n_2, c_2, \mathbf{u}_2)\big\|_{\mathcal{X}}\\
    &~~+\big\|\nabla\mathcal{M}_2(n_1, c_1,\mathbf{u}_1)-\nabla\mathcal{M}_2(n_2, c_2,
       \mathbf{u}_2)\big\|_{\mathcal{X}}+\big\|\mathcal{M}_3(n_1, c_1,
       \mathbf{u}_1)-\mathcal{M}_3(n_2, c_2, \mathbf{u}_2)\big\|_{\mathcal{X}},
  \end{align*}
  Hence, by \eqref{eq:M1-nu}, \eqref{eq:M2-nu}, \eqref{eq:M2-gnu} and \eqref{eq:M3-nu}, it is easy to obtain that
  \begin{align*}
    &\|n_1-n_2\|_{\mathcal{X}}+\|c_1-c_2\|_{\mathcal{X}}+\big\|\nabla(c_1-c_2)
     \big\|_{\mathcal{X}}+\|\mathbf{u}_1-\mathbf{u}_2\|_{\mathcal{X}}\\
    &\leq C(\|n_{1,0}-n_{2,0}\|_{\mathcal{X}}+\|c_{1,0}-c_{2,0}\|_{\mathcal{X}}
     +\big\|\nabla(c_{1,0}-c_{2,0})\big\|_{\mathcal{X}}+\|\mathbf{u}_{1,0}-\mathbf{u}_{2,0}\|_{\mathcal{X}}).
  \end{align*}
  Therefore, the proof of Theorem \ref{thm:RegularThm} is completed.
\end{proof}

\section{Blow-up of the mild solution}
\label{sec:blow-up}
Starting from a smooth initial configuration and after a first period of classical evolution, the phenomenon in which the solution (or in some cases its derivatives) becomes infinite in finite time due to the cumulative effect of the nonlinearities is called blow-up (e.g., \cite{Quittner07}). We shall discuss the blow-up of the solution to the TF-KSNS system (\ref{eq:problem})-(\ref{eq:InitialC}). To this end, according to \cite{Costa23,Andrade22}, we first introduce the following definition of the continuation of the solution.

\begin{definition}[Continuation of the solution]
  Let $(n,c,\mathbf{u}):[0,T]\rightarrow L^{q}(\Omega;\mathbb{R}^{2+d})$ be a mild solution to the TF-KSNS system \eqref{eq:problem}-\eqref{eq:InitialC}. For $\bar{T}>T$, we say that $(\bar{n},\bar{c},\bar{\mathbf{u}}):[0,\bar{T}]\rightarrow L^{q}(\Omega;\mathbb{R}^{2+d})$ is a continuation of $(n,c,\mathbf{u})$ if $(\bar{n},\bar{c},\bar{\mathbf{u}})$ is a mild solution and $n(t)\equiv\bar{n}(t)$, $c(t)\equiv\bar{c}(t)$, $\mathbf{u}(t)\equiv\bar{\mathbf{u}}(t)$ as $t\in[0,T]$.
\end{definition}

Combining the strategies in \cite{Costa23,Andrade22,Jiang22,Li2018}, the proof of Theorem \ref{thm:Blow-up} consists of two parts: (i) the mild solution can be continuously extended, and such a continuation is unique; (ii) the assertion on $T_{max}$ in Theorem \ref{thm:Blow-up} holds by the contradiction method.

For these purposes, we introduce a complete metric space $\bar{\mathbb{S}}_{T}$ as follows. Let $(n,c,\mathbf{u}):[0,T]\rightarrow L^{\rho q}(\Omega;\mathbb{R}^{2+d})$ be the mild solution in $\mathbb{X}$ to the TF-KSNS system \eqref{eq:problem}-\eqref{eq:InitialC}, and denote $\bar{\mathcal{X}}:=\{v(t)\in C((0,\bar{T}],L^{\rho q}(\Omega)):\sup_{t\in(0,\bar{T})}t^{\beta}\|v(t)\|_{L^{\rho q}(\Omega)}<+\infty\}$, which is a Banach space endowed with the norm $\|\bar{v}\|_{\bar{\mathcal{X}}}=\sup_{t\in(0,\bar{T})}t^{\beta}\|\bar{v}(t)\|_{L^{\rho q}(\Omega)}$. Taking $\bar{T}<\infty$ close to $T$ and satisfying $\bar{T}>T$, we extend the map $\mathcal{M}$ in \eqref{eq:map} to the complete metric space $\bar{\mathbb{S}}_{T}$, and
\begin{align*}
  \bar{\mathbb{S}}_{T}
  :=\left\{\small(\bar{n},\bar{c},\bar{\mathbf{u}})\left|~
  \begin{aligned}
    &\bar{n}\in \bar{\mathcal{X}},~\bar{c}\in
       \bar{\mathcal{X}},~\nabla\bar{c}\in\bar{\mathcal{X}}^d,~\bar{\mathbf{u}}\in \bar{\mathcal{X}}^d, \\
    &\sup_{t\in[T,\bar{T}]}\|\bar{n}(t)-n(T)\|_{L^{\rho q}(\Omega)}
      +\sup_{t\in[T,\bar{T}]}\|\bar{c}(t)-c(T)\|_{L^{\rho q}(\Omega)}\\
    &+\sup_{t\in[T,\bar{T}]}\|\nabla \bar{c}(t)-\nabla c(T)\|_{L^{\rho q}(\Omega)}
      +\sup_{t\in[T,\bar{T}]}\|\bar{\mathbf{u}}(t)-\mathbf{u}(T)\|_{L^{\rho q}(\Omega)}\le R,\\
    &\bar{n}\equiv n,~\bar{c}\equiv c,~\bar{\mathbf{u}}\equiv \mathbf{u}~ {\rm in}~ [0,T]
  \end{aligned}\right.\right\},
\end{align*}
which is endowed with the metric $\mathcal{D}_{\bar{T}}$ given by
\begin{displaymath}
  \mathcal{D}_{\bar{T}}[(\bar{n}_1,\bar{c}_1,\bar{\mathbf{u}}_1),(\bar{n}_2,\bar{c}_2,\bar{\mathbf{u}}_2)]
  :=\|\bar{n}_1-\bar{n}_2\|_{\bar{\mathcal{X}}}
   +\|\bar{c}_1-\bar{c}_2\|_{\bar{\mathcal{X}}}
   +\|\nabla(\bar{c}_1-\bar{c}_2)\|_{\bar{\mathcal{X}}}
   +\|\bar{\mathbf{u}}_1-\bar{\mathbf{u}}_2\|_{\bar{\mathcal{X}}}.
\end{displaymath}

\begin{lemma}\label{lem:Ex-continuous}
  For $\bar{T}>T$ close enough to $T$, the operator $\mathcal{M}$ maps $\bar{\mathbb{S}}_{T}$ to itself, and $\mathcal{M}(\bar{n},\bar{c},\bar{\mathbf{u}})(t)$ is continuous with respect to $t\in[0,\bar{T}]$ for $(\bar{n},\bar{c},\bar{\mathbf{u}})\in\bar{\mathbb{S}}_{T}$.
\end{lemma}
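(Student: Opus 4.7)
The plan is to prove the two claims---stability of $\bar{\mathbb{S}}_T$ under $\mathcal{M}$ and continuity of $\mathcal{M}(\bar{n},\bar{c},\bar{\mathbf{u}})$ on $[0,\bar{T}]$---by exploiting that the uniqueness already established on $[0,T]$ forces $\mathcal{M}$ to reproduce $(n,c,\mathbf{u})$ there, and then by making $\bar{T}-T$ small enough that the new piece on $[T,\bar{T}]$ stays within the ball of radius $R$.

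First I would verify agreement on $[0,T]$. Given $(\bar{n},\bar{c},\bar{\mathbf{u}})\in\bar{\mathbb{S}}_T$, by the defining clause these functions coincide with $(n,c,\mathbf{u})$ on $[0,T]$, so substituting into the Duhamel formulas in \eqref{eq:map} shows that $\mathcal{M}(\bar{n},\bar{c},\bar{\mathbf{u}})(t)$ equals the right-hand side of \eqref{eq:DuhamelP} for every $t\in[0,T]$, which in turn equals $(n(t),c(t),\mathbf{u}(t))$ by Definition \ref{def:mild}. Thus $\mathcal{M}$ preserves the identification clause of $\bar{\mathbb{S}}_T$.

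Next, for $t\in[T,\bar{T}]$, I would decompose (for the first component) $\mathcal{M}_1(\bar{n},\bar{c},\bar{\mathbf{u}})(t)-n(T)=A_1(t)+A_2(t)+A_3(t)$, where $A_1(t)=\bigl(E_\alpha(t^\alpha\Delta)-E_\alpha(T^\alpha\Delta)\bigr)n_0$, $A_2(t)$ collects the differences of the Duhamel integrals over $[0,T]$ evaluated at $t$ versus at $T$, and $A_3(t)=-\int_T^t(t-s)^{\alpha-1}E_{\alpha,\alpha}((t-s)^\alpha\Delta)(\bar{\mathbf{u}}\cdot\nabla\bar{n}+\nabla\cdot(\bar{n}\nabla\bar{c}))(s)\,ds$. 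The term $\|A_1(t)\|_{L^{\rho q}}$ vanishes as $t\to T^+$ by the strong continuity in Lemma \ref{lem:continuous}; $\|A_2(t)\|_{L^{\rho q}}$ vanishes by the Lebesgue dominated convergence argument already developed for $I^{\mathcal{M}_1}_3$--$I^{\mathcal{M}_1}_6$ in the proof of Lemma \ref{lem:cMt}; and $\|A_3(t)\|_{L^{\rho q}}$ is bounded by $C\,T^{-2\beta}(t-T)^{\alpha/2-\alpha d/(2q)}$ times a product of $\bar{\mathcal{X}}$-norms, via a version of Lemma \ref{lem:est-integral} restricted to the subinterval $[T,t]$ (using $s^{-2\beta}\le T^{-2\beta}$ on that interval). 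Completely analogous decompositions handle $\mathcal{M}_2$, $\nabla\mathcal{M}_2$ and $\mathcal{M}_3$. Choosing $\bar{T}-T$ sufficiently small forces each of the four supremum contributions to be at most $R/4$, giving $\mathcal{M}(\bar{n},\bar{c},\bar{\mathbf{u}})\in\bar{\mathbb{S}}_T$.

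Continuity on $[0,\bar{T}]$ then follows in three pieces: on $[0,T]$ it is immediate from the identification above together with Theorem \ref{thm:RegularThm}; on $(T,\bar{T}]$ the argument is verbatim from Lemma \ref{lem:cMt}, since $(\bar{n},\bar{c},\bar{\mathbf{u}})$ satisfies the same weighted bounds as elements of $\mathbb{X}$; and compatibility at $t=T$ is exactly the vanishing of $A_1(t)+A_2(t)+A_3(t)$ as $t\to T^+$ combined with the left continuity of $(n,c,\mathbf{u})$ at $T$. The main technical obstacle is the $A_2$ piece: it involves the difference of two singular Mittag--Leffler kernels $(t-s)^{\alpha-1}E_{\alpha,\alpha}((t-s)^\alpha\Delta)$ and $(T-s)^{\alpha-1}E_{\alpha,\alpha}((T-s)^\alpha\Delta)$ acting on quadratic nonlinearities that are only integrable against a weight near $s=0$. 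Controlling it requires the delicate Mainardi-representation argument of Lemma \ref{lem:cMt}, together with the strict positivity of the time exponents $\tfrac{\alpha}{2}-\tfrac{\alpha d}{2q}-2\beta$ and $\tfrac{1}{2}-\tfrac{d}{2\rho q}-2\beta$ guaranteed by the standing choices $q>2d$ and $\beta=\alpha d/(2\rho q)$.
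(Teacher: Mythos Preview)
Your proposal is correct and follows essentially the same route as the paper: the same three-term decomposition $A_1+A_2+A_3$ (the paper's $\mathcal{J}^{\mathcal{M}_1}_1,\mathcal{J}^{\mathcal{M}_1}_3,\mathcal{J}^{\mathcal{M}_1}_2$), the same appeals to strong continuity, to the dominated-convergence argument of Lemma~\ref{lem:cMt}, and to a direct integral bound on $[T,t]$, and the same conclusion that each supremum contribution can be made at most $R/4$ by taking $\bar{T}$ close to $T$. The only cosmetic difference is that the paper bounds the integrand on $[T,t]$ via the $\bar{\mathbb{S}}_T$-constraint $\|\bar{n}(s)\|_{L^{\rho q}}\le R+\|n(T)\|_{L^{\rho q}}$ and a change of variables, whereas you use $s^{-2\beta}\le T^{-2\beta}$ together with the $\bar{\mathcal{X}}$-norms; both yield the same vanishing as $t\to T^+$.
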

\begin{proof}
  Let $(\bar{n},\bar{c},\bar{\mathbf{u}})\in\bar{\mathbb{S}}_{T}$. It indicates that $\bar{n}\equiv n$, $\bar{c}\equiv c$ and $\bar{\mathbf{u}}\equiv \mathbf{u}$ for $t\in[0,T]$. Then the continuity of $\mathcal{M}(\bar{n},\bar{c},\bar{\mathbf{u}})(t)$ in the interval $[0,T]$ is guaranteed by Theorem \ref{thm:RegularThm}. By the similar approach as in the proof of Lemma \ref{lem:cMt}, we can obtain that $\mathcal{M}(\bar{n},\bar{c},\bar{\mathbf{u}})(t)$ is also continuous for $t\in(T,\bar{T}]$.

  Next, we shall show that $\mathcal{M}(\bar{n},\bar{c},\bar{\mathbf{u}})(t)$ is continuous at $t=T$.  Let $T<t<\bar{T}$, it follows from \eqref{eq:map} and \eqref{eq:DuhamelP} that
  \begin{align*}
    &\mathcal{M}_1(\bar{n},\bar{c},\bar{\mathbf{u}})(t)-n(T)\\
    &=(E_{\alpha}(t^{\alpha}\Delta)-E_{\alpha}(T^{\alpha}\Delta))n_0-\int_{T}^{t}(t-s)^{\alpha-1}E_{\alpha,\alpha}\big((t-s)^{\alpha}\Delta\big)
             \nabla\cdot\left(\bar{\mathbf{u}}\bar{n}+\bar{n}\nabla \bar{c}\right)(s)ds\\
    &-\int_{0}^{T}\big[(t-s)^{\alpha-1}E_{\alpha,\alpha}\big((t-s)^{\alpha}\Delta\big)
             -(T-s)^{\alpha-1}E_{\alpha,\alpha}\big((T-s)^{\alpha}\Delta\big)\big]
             \nabla\cdot\big(\mathbf{u}n+n\nabla c\big)(s)ds\\
    &:= \mathcal{J}^{\mathcal{M}_1}_{1}(t)+ \mathcal{J}^{\mathcal{M}_1}_{2}(t)
           +\mathcal{J}^{\mathcal{M}_1}_{3}(t).
  \end{align*}
  By Lemma \ref{lem:continuous}, we have that $\mathcal{J}^{\mathcal{M}_1}_{1}(t)$ goes to zero in $L^{\rho q}(\Omega)$ as $t\rightarrow T^{+}$. Analogous to the estimates for $I^{\mathcal{M}_1}_{4}$ and $I^{\mathcal{M}_1}_{6}$ in Lemma \ref{lem:cMt}, it obtains by the Lebesgue dominated convergence theorem that $\mathcal{J}^{\mathcal{M}_1}_{3}(t)$ tends to zero as $t\rightarrow T^{+}$. Regarding $\mathcal{J}^{\mathcal{M}_1}_{2}(t)$, by the similar approach for \eref{eq:I1} in Lemma \ref{lem:MLbounds}, we can infer that
  \begin{align*}
    &\big\|\mathcal{J}^{\mathcal{M}_1}_{2}(t)\big\|_{L^{\rho q}(\Omega)}\\
    &\leq C\int_{T}^{t}(t-s)^{\alpha-1}\big(1+(t-s)^{-\frac{\alpha}{2}-\frac{\alpha d}{2q}}\big)
             \big[\|\bar{\mathbf{u}}\bar{n}(s)\|_{L^{\frac{\rho q}{\rho+1}}(\Omega)}+\|\bar{n}\nabla \bar{c}(s)\|_{\frac{\rho q}{\rho+1}(\Omega)}\big]ds\\
    &\leq C\int_{T}^{t}(t-s)^{\frac{\alpha}{2}-\frac{\alpha d}{2q}-1}s^{-2\beta}
            \|\bar{n}(s)\|_{L^{\rho q}(\Omega)}
           \left[\|\bar{\mathbf{u}}(s)\|_{L^{\rho q}(\Omega)}+\|\nabla \bar{c}(s)\|_{L^{\rho q}(\Omega)}\right]ds\\
    &\leq C_1\int_{T/t}^{1}(1-\tau)^{\frac{\alpha}{2}-\frac{\alpha d}{2q}-1}\tau^{-2\beta}ds,
  \end{align*}
  where $C_1=C\big(R+\|n(T)\|_{L^{\rho q}(\Omega)}\big)\big(2R+\|\mathbf{u}(T)\|_{L^{\rho q}(\Omega)}+\|\nabla c(T)\|_{L^{\rho q}(\Omega)}\big)$, then it shows that $\|\mathcal{J}^{\mathcal{M}_1}_{2}(t)\|_{L^{\rho q}(\Omega)}\rightarrow0$ as $t\rightarrow T^{+}$.
  Consequently, we can take $\bar{T}$ close enough to $T^{+}$ such that
  \begin{equation}
    \sup_{t\in[T,\bar{T}]}\|\mathcal{M}_1(\bar{n},\bar{c},\bar{\mathbf{u}})(t)-n(T)\|_{L^{\rho q}(\Omega)}\leq\frac{R}{4}.
  \end{equation}
  By analogy, we can assert that $\mathcal{M}_2(\bar{n}$, $\bar{c},\bar{\mathbf{u}})(t)$, $\nabla\mathcal{M}_2(\bar{n},\bar{c},\bar{\mathbf{u}})(t)$ and $\mathcal{M}_3(\bar{n},\bar{c},\bar{\mathbf{u}})(t)$ are continuous over $[0,\bar{T}]$, then it also holds that $\sup_{t\in[T,\bar{T}]}\|\mathcal{M}_2(\bar{n},\bar{c}$, $\bar{\mathbf{u}})(t)-c(T)\|_{L^{\rho q}(\Omega)}\leq R/4$, $\sup_{t\in[T,\bar{T}]}\|\nabla\mathcal{M}_2(\bar{n},\bar{c},\bar{\mathbf{u}})(t)-\nabla c(T)\|_{L^{\rho q}(\Omega)}\leq R/4$, and $\sup_{t\in[T,\bar{T}]}\|\mathcal{M}_3(\bar{n},\bar{c},\bar{\mathbf{u}})(t)-\mathbf{u}(T)\|_{L^{\rho q}(\Omega)}\leq R/4$ for $\bar{T}$ close enough to $T$. Hence, we have $\mathcal{M}(\bar{n},\bar{c},\bar{\mathbf{u}})\in\bar{\mathbb{S}}_{T}$.
\end{proof}

\begin{lemma}\label{lem:Ex-contraction}
  If $(\bar{n},\bar{c},\bar{\mathbf{u}})\in\bar{\mathbb{S}}_{T}$, then the map $\mathcal{M}$ is a contraction, and there exists a unique solution $(\bar{n},\bar{c},\bar{\mathbf{u}})$ being the continuation of the mild solution $(n,c,\mathbf{u})$ to the interval $[0,\bar{T}]$.
\end{lemma}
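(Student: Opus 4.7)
The plan is to reduce Lemma \ref{lem:Ex-contraction} to a replay of Lemma \ref{lem:Contraction}, but over the short interval $[T,\bar T]$ only, and then invoke the Banach fixed point theorem in the complete metric space $\bar{\mathbb{S}}_T$.

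The decisive observation is that for any two triples $(\bar n_1,\bar c_1,\bar{\mathbf u}_1),(\bar n_2,\bar c_2,\bar{\mathbf u}_2)\in\bar{\mathbb{S}}_T$, both restrictions to $[0,T]$ coincide with $(n,c,\mathbf u)$. Hence the differences $\mathcal M_i(\bar n_1,\bar c_1,\bar{\mathbf u}_1)-\mathcal M_i(\bar n_2,\bar c_2,\bar{\mathbf u}_2)$ vanish identically on $[0,T]$, and for $t\in(T,\bar T]$ only the integrals on $[T,t]$ contribute (the leading free terms $E_\alpha(t^\alpha\Delta)n_0$, $E_\alpha(t^\alpha(\Delta-\gamma))c_0$, $e^{t\Delta}\mathbf u_0$ cancel, and the Duhamel integrals on $[0,T]$ cancel because the integrands coincide). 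Therefore I would re-run the bilinear and linear bookkeeping from \eqref{eq:M1-nu}--\eqref{eq:M3-nu}, but with the outer integral reduced to $\int_T^t$ instead of $\int_0^t$.

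The estimates themselves are exactly those supplied by Lemmas \ref{lem:est-integral}, \ref{lem:Heatgroup}, \ref{lem:Dirichletgroup} and \ref{lem:MLbounds}; the only new point is that each offending integral is dominated by
\[
\int_T^t(t-s)^{\mu-1}s^{-2\beta}\,ds\le T^{-2\beta}\int_T^t(t-s)^{\mu-1}\,ds
=\tfrac{T^{-2\beta}}{\mu}(t-T)^{\mu},
\]
for the appropriate exponents $\mu\in\{\alpha,\tfrac\alpha2,\tfrac\alpha2-\tfrac{\alpha d}{2q},\alpha-\tfrac{\alpha d}{2q},\tfrac12-\tfrac{d}{2\rho q}\}>0$. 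Since $\bar T\in\bar{\mathbb{S}}_T$ is taken arbitrarily close to $T$, each such factor $(\bar T-T)^{\mu}$ is as small as we please, while the prefactors are controlled uniformly by the radius $R$ appearing in the definition of $\bar{\mathbb{S}}_T$ (and by $\|n(T)\|_{L^{\rho q}},\|c(T)\|_{L^{\rho q}},\|\nabla c(T)\|_{L^{\rho q}},\|\mathbf u(T)\|_{L^{\rho q}}$, which are finite by Theorem \ref{thm:RegularThm}). Combining these four lines as in the proof of Lemma \ref{lem:Contraction} yields
\[
\mathcal D_{\bar T}\bigl[\mathcal M(\bar n_1,\bar c_1,\bar{\mathbf u}_1),\mathcal M(\bar n_2,\bar c_2,\bar{\mathbf u}_2)\bigr]
\le \kappa(\bar T)\,\mathcal D_{\bar T}\bigl[(\bar n_1,\bar c_1,\bar{\mathbf u}_1),(\bar n_2,\bar c_2,\bar{\mathbf u}_2)\bigr],
\]
with $\kappa(\bar T)\to0$ as $\bar T\to T^{+}$; picking $\bar T$ so that $\kappa(\bar T)\le\tfrac12$ gives the contraction.

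With Lemma \ref{lem:Ex-continuous} ensuring $\mathcal M:\bar{\mathbb{S}}_T\to\bar{\mathbb{S}}_T$ is well defined and continuous, the Banach fixed point theorem \cite[Theorem 5.7]{Brezis10} yields a unique fixed point $(\bar n,\bar c,\bar{\mathbf u})\in\bar{\mathbb{S}}_T$, which by definition of $\bar{\mathbb{S}}_T$ coincides with $(n,c,\mathbf u)$ on $[0,T]$ and satisfies the Duhamel representation \eqref{eq:DuhamelP} on $[0,\bar T]$; hence it is a mild solution in the sense of Definition \ref{def:mild} extending $(n,c,\mathbf u)$. For uniqueness of the continuation, I would note that any other continuation $(\tilde n,\tilde c,\tilde{\mathbf u})$ agreeing with $(n,c,\mathbf u)$ on $[0,T]$ is, after possibly shrinking $\bar T$ further, again an element of $\bar{\mathbb{S}}_T$ (its asymptotic closeness to $(n(T),c(T),\nabla c(T),\mathbf u(T))$ near $t=T^{+}$ follows from the continuity established in Theorem \ref{thm:RegularThm} applied at the base time $T$); hence it must equal $(\bar n,\bar c,\bar{\mathbf u})$ on the smaller interval, and a standard bootstrap extending the coincidence set up to $\bar T$ finishes the argument.

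The only mildly delicate point is the uniformity of the prefactors when absorbing $\|n(T)\|_{L^{\rho q}}$, $\|c(T)\|_{L^{\rho q}}$, $\|\nabla c(T)\|_{L^{\rho q}}$, $\|\mathbf u(T)\|_{L^{\rho q}}$ into constants that do not depend on $\bar T$: this is harmless because these quantities are fixed once $T$ is fixed, so they merge into the constant in front of $(\bar T-T)^{\mu}$ and do not interfere with the limit $\bar T\to T^{+}$. Everything else is a mechanical replay of the proof of Lemma \ref{lem:Contraction}.
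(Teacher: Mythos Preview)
Your proposal is correct and follows the same overall strategy as the paper: replay the contraction estimates of Lemma~\ref{lem:Contraction} on the extended space $\bar{\mathbb{S}}_T$ and invoke the Banach fixed point theorem, combined with Lemma~\ref{lem:Ex-continuous} for the self-mapping.

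The one noteworthy difference is in how the smallness of the contraction constant is obtained. The paper simply rewrites the bounds of Lemma~\ref{lem:Contraction} with $T$ replaced by $\bar T$, records coefficients of the form $C\bar T^{\frac{\alpha}{2}-\frac{\alpha d}{2q}-\beta}B(1-2\beta,\tfrac{\alpha}{2}-\tfrac{\alpha d}{2q})\max\bigl(3R/8,\,\bar T^{\beta}R/4+\bar T^{\beta}\|\cdot(T)\|_{L^{\rho q}}\bigr)$, and then asserts that ``by choosing an appropriate $\bar T$'' these yield a contraction. You instead make explicit the key cancellation---that both candidate triples coincide with $(n,c,\mathbf u)$ on $[0,T]$, so every difference integrand vanishes there and the Duhamel integrals collapse to $\int_T^t$---and then extract the factor $(\bar T-T)^{\mu}$ directly via $\int_T^t(t-s)^{\mu-1}s^{-2\beta}ds\le T^{-2\beta}(t-T)^{\mu}/\mu$. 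This is cleaner and actually supplies the mechanism the paper's phrasing leaves implicit: without that reduction to $[T,t]$, the paper's stated coefficients do not tend to zero as $\bar T\to T^{+}$. Your bootstrap for uniqueness of the continuation is also more detailed than the paper's one-line appeal to the fixed point; both are standard.
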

\begin{proof}
  The analysis is analogous to that of Lemma \ref{lem:Contraction}.
  Let $(\bar{n}_1,\bar{c}_1,\bar{\mathbf{u}}_1)$ and $(\bar{n}_2,\bar{c}_2,\bar{\mathbf{u}}_2)$ belong to $\bar{\mathbb{S}}_{T}$. We first consider to estimate $\|\mathcal{M}_1(\bar{n}_1,\bar{c}_1,\bar{\mathbf{u}}_1)-\mathcal{M}_1(\bar{n}_2,\bar{c}_2,\bar{\mathbf{u}}_2)\|_{\bar{\mathcal{X}}}$. It consists of analyzing the terms $J^{\mathcal{M}_1}_{j}$ ($j=1,\cdots,4$) as in Lemma \ref{lem:Contraction} with $T$ replaced by $\bar{T}$. By the similar approach, we have
  \begin{align*}
    &\|\mathcal{M}_1(\bar{n}_1,\bar{c}_1,\bar{\mathbf{u}}_1)-\mathcal{M}_1(\bar{n}_2,\bar{c}_2,\bar{\mathbf{u}}_2)\|_{\bar{\mathcal{X}}}\\
    &\leq C_1\max\big(3R/8,\bar{T}^{\beta}R/4+\bar{T}^{\beta}\|n_2(T)\|_{L^{\rho q}(\Omega)}\big)
          \|\mathbf{u}_1-\mathbf{u}_2\|_{\bar{\mathcal{X}}}\\
    &~~~~+C_1\max\big(3R/8,\bar{T}^{\beta}R/4+\bar{T}^{\beta}\|\mathbf{u}_1(T)\|_{L^{\rho q}(\Omega)}\big)\|n_1-n_2\|_{\bar{\mathcal{X}}}\\
    &~~~~+C_1\max\big(3R/8,\bar{T}^{\beta}R/4+\bar{T}^{\beta}\|\nabla c_2(T)\|_{L^{\rho q}(\Omega)}\big)\|n_1-n_2\|_{\bar{\mathcal{X}}}\\
    &~~~~+C_1\max\big(3R/8,\bar{T}^{\beta}R/4+\bar{T}^{\beta}\|n_1(T)\|_{L^{\rho q}(\Omega)}\big)\|\nabla c_1-\nabla c_2\|_{\bar{\mathcal{X}}},
  \end{align*}
  where $C_1=C\bar{T}^{\frac{\alpha}{2}-\frac{\alpha d}{2q}-\beta}B(1-2\beta,\frac{\alpha}{2}-\frac{\alpha d}{2q})$. For $\mathcal{M}_2$, we can obtain that
  \begin{align*}
    &\|\mathcal{M}_2(\bar{n}_1,\bar{c}_1,\bar{\mathbf{u}}_1)-\mathcal{M}_2(\bar{n}_2,\bar{c}_2,\bar{\mathbf{u}}_2)\|_{\bar{\mathcal{X}}}\\
    &\leq C_2\max\big(3R/8,\bar{T}^{\beta}R/4+\bar{T}^{\beta}\|\nabla c_1(T)\|_{L^{\rho q}(\Omega)}\big)\|\bar{\mathbf{u}}_1-\bar{\mathbf{u}}_2\|_{\bar{\mathcal{X}}}\\
    &~~~~+C_2\max\big(3R/8,\bar{T}^{\beta}R/4+\bar{T}^{\beta}\|\mathbf{u}_2(T)\|_{L^{\rho q}(\Omega)}\big)\|\nabla c_1-\nabla c_2\|_{\bar{\mathcal{X}}}\\
    &~~~~+C\bar{T}^{\alpha}B(1-\beta,\alpha)\|n_1-n_2\|_{\bar{\mathcal{X}}},
  \end{align*}
  in which $C_2=C\bar{T}^{\alpha-\frac{\alpha d}{2q}-\beta}B(1-2\beta,\alpha-\frac{\alpha d}{2q})$.
  Regarding the estimate for $\nabla\mathcal{M}_2$, it holds that
  \begin{align*}
    &\|\nabla\mathcal{M}_2(\bar{n}_1,\bar{c}_1,\bar{\mathbf{u}}_1)-\nabla\mathcal{M}_2(\bar{n}_2,\bar{c}_2,\bar{\mathbf{u}}_2)\|_{\bar{\mathcal{X}}}\\
    &\leq C_1\max\big(3R/8,\bar{T}^{\beta}R/4+\bar{T}^{\beta}\|\nabla c_1(T)\|_{L^{\rho q}(\Omega)}\big)\|\mathbf{u}_1-\mathbf{u}_2\|_{\bar{\mathcal{X}}},\\
    &~~~~+C_1\max\big(3R/8,\bar{T}^{\beta}R/4+\bar{T}^{\beta}\|\mathbf{u}_2(T)\|_{L^{\rho q}(\Omega)}\big)\|\nabla c_1-\nabla c_2\|_{\bar{\mathcal{X}}}\\
    &~~~~+C\bar{T}^{\frac{\alpha}{2}}B(1-\beta,\frac{\alpha}{2})\|n_1-n_2\|_{\bar{\mathcal{X}}}.
  \end{align*}
  In addition, let $C_3=C\bar{T}^{\frac{1}{2}-\frac{d}{2\rho q}-\beta}B(1-2\beta, \frac{1}{2}-\frac{d}{2\rho q})$, the estimate for $\mathcal{M}_3$ is as follows
  \begin{align*}
    &\|\mathcal{M}_3(\bar{n}_1,\bar{c}_1,\bar{\mathbf{u}}_1)-\mathcal{M}_3(\bar{n}_2,\bar{c}_2,\bar{\mathbf{u}}_2)\|_{\bar{\mathcal{X}}}\\
    &\leq C_3\max\big(3R/8,\bar{T}^{\beta}R/4+\bar{T}^{\beta}\|\mathbf{u}_1(T)\|_{L^{\rho q}(\Omega)}\big)\|\mathbf{u}_1-\mathbf{u}_2\|_{\bar{\mathcal{X}}}\\
    &~~~~+C_3\max\big(3R/8,\bar{T}^{\beta}R/4+\bar{T}^{\beta}\|\mathbf{u}_2(T)\|_{L^{\rho q}(\Omega)}\big)\|\mathbf{u}_1-\mathbf{u}_2\|_{\bar{\mathcal{X}}}\\
    &~~~~+C\bar{T}\|n_1-n_2\|_{\mathcal{X}}.
  \end{align*}
  Then, by choosing an appropriate $\bar{T}$, the contraction of $\mathcal{M}$ can be guaranteed
  \begin{align*}
    \mathcal{D}_{\bar{T}}[\mathcal{M}(\bar{n}_1,\bar{c}_1,\bar{\mathbf{u}}_1),\mathcal{M}(\bar{n}_2, \bar{c}_2, \bar{\mathbf{u}}_2)]\leq\bar{\vartheta} \mathcal{D}_{\bar{T}}[(\bar{n}_1,\bar{c}_1,\bar{\mathbf{u}}_1),(\bar{n}_2,\bar{c}_2,\bar{\mathbf{u}}_2)],~~ \text{with}~~ 0<\bar{\vartheta}<1.
  \end{align*}
  Consequently, such a continuation is uniquely established by the Banach fixed point theorem.
\end{proof}

Based on the above lemmas, a detailed discussion on the blow-up of the mild solution to the TF-KSNS system \eqref{eq:problem}-\eqref{eq:InitialC} is provided as follows for Theorem \ref{thm:Blow-up}.

\begin{proof}[\textbf{Proof of Theorem \ref{thm:Blow-up}}]
  We prove the result using a contradiction approach.
  By Lemma \ref{lem:Ex-contraction}, we suppose that the mild solution $(n,c,\mathbf{u})$ is uniquely continued up to a maximal time $T_{max}<+\infty$, and for any $t\in[0,T_{max})$,
  \begin{align*}
    \|n(t)\|_{L^{\rho q}(\Omega)}<\infty,~ ~
    \|c(t)\|_{L^{\rho q}(\Omega)}<\infty,~~{\rm and}~~
    \|\mathbf{u}(t)\|_{L^{\rho q}(\Omega)}<\infty.
  \end{align*}
  Let $\{t_k\}_{k=0}^{\infty}$ be a sequence in $[0,T_{max})$ and $t_k\rightarrow T^{-}_{max}$ as $k\rightarrow+\infty$. For any $m,k\in\mathbb{N}$ with $0<t_m<t_k<T_{max}$, it has $t_m,t_k\rightarrow T^{-}_{max}$ as $m,k\rightarrow+\infty$. Then, by the analogous estimates in the proof of Lemma \ref{lem:cMt}, for $m$, $k\rightarrow+\infty$, we have
  \begin{align*}
    \|n(t_m)-n(t_k)\|_{L^{\rho q}(\Omega)}+\|c(t_m)-c(t_k)\|_{L^{\rho q}(\Omega)}
    +\|\mathbf{u}(t_m)-\mathbf{u}(t_k)\|_{L^{\rho q}(\Omega)}\rightarrow0,
  \end{align*}
  which indicates that $\{(n(t_k),c(t_k),\mathbf{u}(t_k))\}$ is a Cauchy sequence and convergent as $t_k\rightarrow T^{-}_{max}$. Hence, $n(T_{max})$, $c(T_{max})$ and $\mathbf{u}(T_{max})$ can be defined as the limit, and $(n,c,\mathbf{u})$ can further continued beyond $T_{max}$, which leads to a contradiction.
\end{proof}
\section{Conclusions}
\label{sec:Conclusions}
In this paper, we consider the mathematical modeling and analysis for the chemotactic diffusion kinetics of myxobacteria in the soil (porous medium) under the influence of liquid flow fields (non-static environment).
In such a specific biological process, we first derive the time-fractional Keller-Segel system for describing the chemotactic diffusion of myxobacteria and slime from the CTRW approach, and then couple it with the Navier-Stokes equations for describing the influence of liquid flow field to obtain the TF-KSNS system (\ref{eq:problem}). To further investigate the mathematical properties of the system with appropriate initial and boundary conditions, we construct a suitable metric space and apply the Banach fixed point theorem to obtain the local well-posedness, dependence on initial values, and asymptotic properties of the mild solution. The continuity and uniqueness of the continuation are also proved, as well as the blow-up of the solution. The discussions lay a foundation for further investigations, such as the existence and uniqueness of global solutions under specific initial conditions and the solvability in Besov spaces. In addition, numerical approximation, simulation, and numerical analysis for the system are left in future works.

\section*{Declaration of competing interest}
There is no conflict of interests.


\section*{Acknowledgments}
\label{sec:Acknowledgments}
This work was supported by the National Natural Science Foundation of China under Grant Nos. 12225107 and 12071195, the Major Science and Technology Projects in Gansu Province-Leading Talents in Science and Technology under Grant No. 23ZDKA0005, the Innovative Groups of Basic Research in Gansu Province under Grant No. 22JR5RA391, and Lanzhou Talent Work Special Fund.

\section*{References}


\end{document}